\DeclareMathOperator{\Hom}{Hom}
\DeclareMathOperator{\codim}{codim}
\DeclareMathOperator{\rank}{rank}
\DeclareMathOperator{\Spec}{Spec}
\DeclareMathOperator{\id}{Id}
\DeclareMathOperator{\Tr}{Tr}
\DeclareMathOperator{\sign}{sgn}
\DeclareMathOperator{\prin}{prin}
\DeclareMathOperator{\In}{in}
\DeclareMathOperator{\Mult}{Mult}
\DeclareMathOperator{\Aut}{Aut}
\DeclareMathOperator{\scat}{Scat}
\DeclareMathOperator{\Joints}{Joints}
\DeclareMathOperator{\GL}{GL}
\DeclareMathOperator{\Supp}{Supp}
\DeclareMathOperator{\Li}{Li}
\DeclareMathOperator{\ad}{ad}
\DeclareMathOperator{\as}{as}
\DeclareMathOperator{\Parents}{Parents}
\DeclareMathOperator{\Ancestors}{Ancestors}
\DeclareMathOperator{\Leaves}{Leaves}
\DeclareMathOperator{\Flags}{Flags}
\DeclareMathOperator{\rib}{\mbox{\textramshorns}}
\DeclareMathOperator{\ob}{ob}
\DeclareMathOperator{\reg}{reg}
\DeclareMathOperator{\Hall}{Hall}
\DeclareMathOperator{\Skew}{skew}
\DeclareMathOperator{\an}{an}
\DeclareMathOperator{\sst}{ss}
\DeclareMathOperator{\scl}{cl}
\let\u=\underline
\let\bb=\mathbb
\let\rar=\rightarrow
\let\f=\mathfrak
\let\s=\mathcal
\let\wh=\widehat
\let\wt=\widetilde
\newcommand {\ww} {{\bf w}}
\newcommand {\JJ} {{\bf J}}
\theoremstyle{plain}
 \newtheorem{thm}{Theorem}[section]
 \newtheorem{lem}[thm]{Lemma}
  \newtheorem{prop}[thm]{Proposition}
\newcommand{\C}{{\mathbb C}}
\newcommand{\Q}{{\mathbb Q}}
\newcommand{\R}{{\mathbb R}}
\newcommand{\Z}{{\mathbb Z}}
\newcommand {\A} {{\bf A}}
\newcommand{\calM}{{\mathcal M}}
\newcommand{\frakd}{\mathfrak{d}}
\newcommand{\frakD}{\mathfrak{D}}
\newcommand{\St}{\operatorname{St}}
\newcommand{\rep}{\operatorname{rep}}
\newcommand{\End}{\operatorname{End}}
\newcommand{\Gr}{\operatorname{Gr}}
\newcommand{\supp}{\operatorname{supp}}
\newcommand{\Image}{\operatorname{Image}}
\newcommand{\crit}{\operatorname{crit}}
\newcommand{\enpt}{{\mathcal{Q}}}  
\newcommand{\stab}{{\theta}}     
\theoremstyle{definition}
 \newtheorem{dfn}[thm]{Definition}
 \newtheorem{eg}[thm]{Example}
\theoremstyle{remark} 
 \newtheorem{rmk}[thm]{Remark}
\title{Donaldson-Thomas invariants from tropical disks}
\author{Man-Wai Cheung}
\address{Department of Mathematics, One Oxford Street Cambridge, Harvard University, MA 02138}
\email{mwcheung@math.harvard.edu}
\author{Travis Mandel}
\address{School of Mathematics\\
University of Edinburgh\\
Edinburgh EH9 3FD\\
UK}
\email{Travis.Mandel{\char'100}ed.ac.uk}
\thanks{The second author was supported by the National Science Foundation RTG Grant DMS-1246989, and later by the Starter Grant ``Categorified Donaldson-Thomas Theory'' no. 759967 of the European Research Council.}
\begin{document}

\begin{abstract}
We prove that the quantum DT-invariants associated to quivers with genteel potential can be expressed in terms of certain refined counts of tropical disks.  This is based on a quantum version of Bridgeland's description of cluster scattering diagrams in terms of stabilitiy conditions, plus a new version of the description of scattering diagrams in terms of tropical disk counts.  The weights with which the tropical disks are counted are expressed in terms of motivic integrals of certain quiver flag varieties.  We also show via explicit counterexample that Hall algebra broken lines do not result in consistent Hall algebra theta functions, i.e., they violate the extension of a lemma of Carl-Pumperla-Siebert from the classical setting.
\end{abstract}

\maketitle

\setcounter{tocdepth}{1}
\tableofcontents

\section{Introduction}\label{Intro}

In \cite{GHKK}, Gross-Hacking-Keel-Kontsevich used scattering diagrams to construct canonical bases for cluster algebras.  Several articles \cite{Rein,GP,KSmotivic,Nagao,Kel} have developed connections between DT-invariants and various scattering diagrams or cluster transformations. Building off these ideas, Bridgeland \cite{Bridge} constructed Hall algebra scattering diagrams whose classical integrals often recover the cluster scattering diagrams (cf. our Prop. \ref{IgenteelProp} for the quantum analog).  On the other hand, \cite{GPS,CPS,FS,Man3} show how to express various scattering diagrams in terms of certain (refined) counts of tropical curves or disks.  By extending and combining these ideas, we obtain new expressions for quantum DT-invariants in terms of refined counts of tropical disks.

\subsection{Quantum DT-invariants from tropical ribbons}

Let $(Q,W)$ be a finite quiver $Q$ without loops or oriented 2-cycles, plus a choice of finite potential $W$, i.e., a finite linear combination of oriented cycles in $Q$.  Denote the vertex set of $Q$ by $Q_0$.  There are standard notions, reviewed in \S \ref{HallSection}, of the associated category of representations $\rep (Q,W)$, the corresponding Grothendieck lattice $N=\bb{Z}^{Q_0}$, and the moduli stack $\s{M}$ of objects in $\rep(Q,W)$.  Points $\stab$ in $M_{\bb{R}}:=\Hom(N,\bb{R})$ can be viewed as stability conditions on $\rep (Q,W)$, determining a substack $1_{\sst}(\stab)$ of $\stab$-stemistable objects\footnote{More precisely, $1_{\sst}(\stab)$ is the Hall algebra element associated to the substack of $\stab$-semistable objects.}  in $\rep(Q,W)$, cf. Definition \ref{semistableDef}.  We let $\s{I}_t$ denote the quantum integration map taking varieties over $\s{M}$ to elements of a quantum torus algebra $\bb{C}_t[N^{\oplus}]$, cf. \S \ref{Imap}.  E.g., if $W=0$, then $\s{I}_t$ is the generalized Poincar\'e polynomial.  We are interested are the invariants $\s{I}_t(\log(1_{\sst}(\stab)))$.

Some additional notation and terminology regarding this setup will be needed.  The lattice $N$ is equipped with the natural basis $\{e_i\}_{i\in Q_0}$.  Let $B$ denote the skew-symmetric Euler form on $N$, cf. \eqref{Bdef}, and define\footnote{In \cite{GHKK}, $p^*$ is defined by inserting $n$ into the first entry of the skew-symmemtrizable form.  This discrepancy is because the form $B$ here and in \cite{Bridge} is negative the form used in \cite{GHKK}.} $p^*:N\rar M$, $p^*(n)=B(\cdot,n)$.  We say $\stab\in M_{\bb{R}}$ is general if it is not in the intersection of two distinct hyperplanes of the form $n^{\perp}$ for $n\in N\setminus \{0\}$, cf. Remark \ref{general}.

We will also need the following setup coming from the theory of tropical curves and scattering diagrams, cf. \S \ref{TropDisks} for details.  By a weight-vector, we will mean a tuple $\ww=(\ww_i)_{i\in Q_0}$ where each $\ww_i=(w_{ij})_{j=1,\ldots,l_i}$ consists of positive integers $w_{i1} \leq w_{i2} \leq \ldots \leq w_{il_i}$.  Denote the length $l(\ww):=\sum_i l_i$, and let $\Aut(\ww)$ be the group of automorphisms of the second indices of the $\ww_i$'s which act trivially on $\ww$.  Define the ``multiple cover contributions''  $R_{\ww}\coloneqq \prod_{ij} \frac{(-1)^{w_{ij}-1}}{w_{ij}(q^{w_{ij}}-1)}\in \bb{C}[t^{\pm 1}]$ where $q\coloneqq t^2$.

One says that a tropical disk $h:\Gamma\rar M_{\bb{R}}$ (cf. \S \ref{TropDisks}) has degree $\Delta_{\ww}$ if the unbounded edges $E_{ij}$ are labelled by the indices of $\ww$, and if the weighted outgoing direction of $h(E_{ij})$ equals $w_{ij}p^*(e_i)$.  Let $\A_{\ww}$ be a collection of affine hyperplanes $\{A_{ij}\subset M_{\bb{R}}\}$ with $A_{ij}$ a generic translate of $e_i^{\perp}$.  For $\delta>0$, we say that a tropical disk matches the constraints $\delta\A_{\ww}$ if $h(E_{ij})\subset \delta A_{ij}$ for each $i,j$.  The type $\tau$ of a tropical disk is the data of the underlying weighted graph $\Gamma$ plus the data of directions of $h(E)$ for each edge $E$ of $\Gamma$.

Each tropical disk includes the data of a special endpoint vertex $V_{\infty}\in \Gamma^{[0]}$, and we will impose an additional constraint on the image of $V_{\infty}$.  Specifically, given $\stab\in M_{\bb{R}}$ and a tropical disk type $\tau$, we say that $\tau \in \f{T}_{\ww}(\stab)$ if the following holds: given $\epsilon>0$ and any sufficiently small $\delta>0$ (small relative to $\epsilon$), there exist tropical disks of degree $\Delta_{\ww}$ and type $\tau$ which match the constraint $\delta \A_{\ww}$ and have $h(V_{\infty})\in B_{\epsilon}(\stab)$ (the radius $\epsilon$ open ball about $\stab$).  Let $\wh{\f{T}}_{\ww}(\stab)$ denote the corresponding space of tropical ribbon types, i.e., tropical disk types plus the additional data of a cyclic ordering of the edges at each vertex.  We wish to count elements of $\wh{\f{T}}_{\ww}(\stab)$ with a multiplicity which we define next.

The form $B$ descends to a form $\?{B}$ on $p^*(N)$ given by $\?{B}(p^*(n_1),p^*(n_2))=B(n_1,n_2)$.  For $\wh{\tau}\in \wh{\f{T}}_{\ww}(\stab)$, let $\nu(\wh{\tau})$ denote $(-1)$ to the power of the number of vertices of $\wh{\tau}$ where the ribbon structure does not agree with the orientation induced by $\?{B}$, cf. \S \ref{MultSect}.  The ribbon structure induces an ordering $E_{i_1,j_1},\ldots E_{i_{l(\ww)}j_{l(\ww)}}$ on the $E_{ij}$'s.  Given such a tropical ribbon type $\wh{\tau}$, let $\f{Flag}(\wh{\tau})$ denote the variety over $\s{M}$ whose fiber over a (stacky) point corresponding to a representation $M$ is the space of composition series
 \begin{align*}
     0=M_0 \subset M_1 \subset \ldots \subset M_{\sum_{ij} w_{ij}} = M
 \end{align*}
such that the first $w_{i_1j_1}$ quotients $M_i/M_{i-1}$ are isomorphic to the simple representation $S_{i_1}$, then the next $w_{i_2j_2}$ quotients $M_i/M_{i-1}$ are isomorphic to the simple representation $S_{i_2}$, and so on.  The following is the quantum integral case of Theorem \ref{Main-DT-Trop-Thm}.

\begin{thm}\label{MainIntro}
Suppose $(Q,W)$ is genteel over $\f{g}^q$ (cf. \S \ref{GenteelSect}).  Then for general $\stab\in M_{\bb{R}}$,
\begin{align}\label{MainEqnIntro}
    \s{I}_t(\log(1_{\sst}(\stab))) = \sum_{\ww}\left( \frac{1}{|\Aut(\ww)|}\sum_{\wh{\tau}\in \wh{\f{T}}_{\ww}(\stab)} \nu(\wh{\tau})\s{I}_t(R_{\ww}\f{Flag}(\wh{\tau}))\right).
\end{align}
\end{thm}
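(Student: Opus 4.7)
The plan is to combine a quantum-integrated version of Bridgeland's stability-condition scattering diagram with a refined tropical-disk formula for wall functions of perturbed scattering diagrams, and then to identify each vertex contribution with a motivic integral over a quiver flag variety.

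First I would invoke Bridgeland's Hall algebra scattering diagram $\s D^{\Hall}$, whose wall along $n^\perp$ carries the Hall algebra element $1_{\sst}(\stab)$, and apply the quantum integration map $\s I_t$. The genteel hypothesis (Prop.~\ref{IgenteelProp}) ensures that $\s I_t(\s D^{\Hall})$ is a consistent quantum scattering diagram in $\bb C_t[N^{\oplus}]$ whose initial walls are the $e_i^\perp$ with wall functions coming from the simple representations $S_i$. By consistency, the wall function along a general $\stab$ is determined by the initial walls via path-ordered products, so its logarithm at $\stab$ equals $\s I_t(\log 1_{\sst}(\stab))$, i.e.\ the left-hand side of \eqref{MainEqnIntro}.

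Next I would perturb: replace each initial wall $e_i^\perp$ by the parallel affine translates $\{\delta A_{ij}\}_j$, with multiplicities $w_{ij}$ obtained by expanding the initial quantum-dilogarithm wall functions as sums over multiple-cover contributions. This expansion is precisely what produces the factors $R_{\ww}$. The quantum/refined analog of the Gross--Pandharipande--Siebert / Carl--Pumperla--Siebert / Mandel tropical correspondence then expresses the perturbed wall function at $\stab$ as a sum over tropical ribbon disk types whose unbounded edges match the chosen translates and whose endpoint lies near $\stab$. This is precisely the condition $\wh\tau \in \wh{\f T}_{\ww}(\stab)$, and the ribbon structure at each vertex records how the non-commutative quantum-torus product is taken across walls.

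To upgrade the bare vertex multiplicities to $\s I_t(R_{\ww}\,\f{Flag}(\wh\tau))$, I would trace their Hall algebra origin: the Hall algebra product of two semistable classes records pairs of composable extensions, so iterating along the tree-like tropical disk assembles nested extensions into a single filtration; the ordering $E_{i_1 j_1}, \ldots, E_{i_{l(\ww)} j_{l(\ww)}}$ induced by the ribbon structure forces the successive quotients to be $S_{i_k}$ repeated $w_{i_k j_k}$ times, which is exactly the defining condition of $\f{Flag}(\wh\tau)$. The sign $\nu(\wh\tau)$ appears because the quantum commutator in $\bb C_t[N^\oplus]$ is controlled by $\?B$ while the refined tropical vertex is naturally written in the $\?B$-induced orientation, so each disagreement flips a sign; dividing by $|\Aut(\ww)|$ corrects for permutations of indistinguishable translates in $\A_{\ww}$. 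The main obstacle I expect is this final identification: rigorously matching iterated Hall algebra products at tropical-disk vertices with the single quiver flag variety of filtered objects, and verifying that $\nu(\wh\tau)$ and $R_{\ww}$ are correctly reproduced after applying $\s I_t$. A naive Hall-algebra version of this argument fails in general (via the counterexample announced in the abstract), so the proof presumably passes through the genteel hypothesis, which is precisely what guarantees that Hall-algebra commutators descend cleanly to the expected quantum-torus brackets under $\s I_t$.
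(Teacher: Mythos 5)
Your proposal matches the paper's argument closely and in the same order: Bridgeland's Hall algebra scattering diagram plus genteelness over $\f{g}^q$ reduces the left-hand side to the wall function of $\scat(\f{D}^q_{\In})$; perturbation of the initial walls and the tropical-disk correspondence (Theorem~\ref{TropicalScattering}) give the ribbon-disk sum; the $\log$ of the quantum dilogarithm expansion produces the $R_{\ww}$ factors; and Lemma~\ref{CompSeriesLem} identifies the ribbon-ordered iterated product $\kappa_{i_1}^{w_{i_1j_1}}\cdots\kappa_{i_{l(\ww)}}^{w_{i_{l(\ww)}j_{l(\ww)}}}$ with $\f{Flag}(\wh\tau)$, with $\nu(\wh\tau)$ tracking disagreements between the ribbon orientation and the $B$-induced one. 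The only conceptual slip is in your closing sentence: genteelness is not what makes Hall-algebra commutators ``descend cleanly'' under $\s{I}_t$ --- that is automatic, since $\s{I}_t$ is an algebra (hence Lie algebra) homomorphism. What genteelness over $\f{g}^q$ actually buys is the identification of $\f{D}^q$, whose wall functions encode $\s{I}_t(\log 1_{\sst}(\stab))$, with $\scat(\f{D}^q_{\In})$. The reason the tropical scattering formula cannot be run directly over $\f{g}^{\Hall}$ is instead that $\f{g}^{\Hall}$ lacks Abelian walls, so Theorem~\ref{TropicalScattering} applies only after projecting to a skew-symmetric quotient $\f{g}^{\f{i}}$ with $\f{i}\supset\f{i}^{\Skew}$ (as in Theorem~\ref{HallScatThm}); and the counterexample announced in the abstract concerns Hall algebra broken lines and the Carl--Pumperla--Siebert lemma for theta functions, which is a separate issue from the wall-function formula used here.
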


See \S \ref{GenteelSect} for details on our version of the genteel property and modifications thereof.  We note here that genteelness of $(Q,W)$ is known to at least hold for acyclic quivers with $W=0$.  More generally, the possibly weaker condition of genteelness over $\f{g}^q$ holds whenever $Q$ admits a green-to-red sequence and $W$ is non-degenerate \cite[Cor. 1.2(i)]{Mou}.

The same statement applies with $\s{I}_t$ replaced by the classical integration map $\s{I}$ (i.e., the $t\mapsto 1$ limit, i.e., taking generalized Euler characteristics), and similarly for $\s{I}_t$ replaced by other projections $\s{I}^{\f{i}}$ of the Hall algebra defined in \S \ref{HallScatSetup}.  For the classical version though, one should view $\s{I}(R_{\ww}\f{Flag}(\wh{\tau}))$ as living in $\f{A}^{\scl}$, a logarithmic version of the Weyl algebra, cf. Example \ref{fA}.  These factors $\s{I}_t(R_{\ww}\f{Flag}(\wh{\tau}))$ and $\s{I}(R_{\ww}\f{Flag}(\wh{\tau}))$ can be more easily computed as products in the quantum torus algebra or Weyl algebra, respectively, cf. Remark \ref{MultComputation}.

Alternatively, one can replace the sum over tropical ribbons with a sum over tropical disks, and then the tropical ribbon multiplicities $\nu(\wh{\tau})\s{I}_t(R_{\ww}\f{Flag}(\wh{\tau}))$ are replaced with Block-G\"ottsche \cite{BG} style refined tropical disk multiplicities $R'_{\ww}\prod_V [\Mult(V)]_t$, and similarly for the classical cases, cf. Remark \ref{MultComputation} again.  Our intention in this paper though is to give a representation-theoretic description of the tropical multiplicities, which is why we state Theorem \ref{MainIntro} in terms of moduli of flags.

See Example \ref{TermComputation} for a sample computation of a term on the right-hand side of \eqref{MainEqnIntro}.

\subsection{Hall algebra broken lines violate the Carl-Pumperla-Siebert Lemma}

One might hope (as we had hoped) that Theorem \ref{MainIntro} holds without applying the integration maps, i.e., as an identity in the Hall algebra.  Unfortunately, this fails as a result of the fact that elements of the Hall algebra with parallel dimension vectors need not commute (although we see that the result does hold after modding out by the ideal generated by these commutators).  In \S \ref{ThetaSection}, we show that similar issues cause problems for theta functions.

As in \cite{CPS,GHK1,GHS}, the construction of theta functions in \cite{GHKK} is based on enumerating broken lines (an abridged version of tropical disks).  This enumeration depends on the designated endpoint of the broken lines, but according to \cite[\S 4]{CPS}, different choices of endpoint are related by path-ordered product, essentially meaning that these choices glue to give well-defined global functions on the mirror.  \cite[Thm. 2.14]{Man3} gives a refined version of this lemma of Carl-Pumperla-Siebert, implying that the analogous gluing property holds for quantum theta functions, cf. Lemma \ref{CPS}.  Refining further, \cite{mthesis, cheung2019theta} defines Hall algebra broken lines, and from these one might hope to define Hall algebra theta functions.  Unfortunately, this is not a well-behaved notion:
\begin{prop}[Prop. \ref{CPSfail} in the main text]
The Carl-Pumperla-Siebert Lemma does not hold for Hall algebra broken lines.
\end{prop}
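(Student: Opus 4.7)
The strategy is to exhibit an explicit counterexample, since the proposition is a negative result. The introduction already pinpoints the mechanism of failure: Hall algebra elements with parallel dimension vectors generally do not commute, whereas the classical and quantum CPS lemmas depend on the ambient algebra being commutative (respectively $q$-commutative) along a single ray. So the plan is to isolate this non-commutativity in the smallest possible tropical picture, and check that path-ordered transport around a chamber change fails to match the broken-line theta function transformation.

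First I would choose the minimal interesting setup: a rank two lattice with a scattering diagram containing (at least) two distinct incoming walls supporting non-trivial Hall algebra wall functions whose outgoing scattering produces a third wall parallel to one of the inputs. The Kronecker-type examples worked out in \cite{Bridge} and in \cite{mthesis,cheung2019theta} are the natural candidates, since their scattering diagrams are simple enough to enumerate broken lines by hand but already exhibit parallel walls in the scattered diagram. Concretely, I would pick an initial Hall algebra wall $(\fd_1,1+x)$ and $(\fd_2,1+y)$ whose Hall-algebra bracket produces outgoing rays with classes along $e_1+e_2$ and possibly along $2e_1+e_2$, $e_1+2e_2$, etc., and fix an asymptotic monomial $m$ together with two endpoints $Q,Q'$ lying on opposite sides of one of the new outgoing walls.

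Second, I would compute the Hall algebra theta function $\vartheta_m(Q)$ as the sum over Hall algebra broken lines of asymptotic direction $m$ ending at $Q$, using the definition in \cite{mthesis, cheung2019theta}. Then compute $\vartheta_m(Q')$ the same way. Third, I would compute the image $\Phi(\vartheta_m(Q))$ where $\Phi$ is the path-ordered product along a short path crossing just the one wall separating $Q$ from $Q'$; its wall function is a definite Hall algebra element $1+H$ determined by the scattering calculation. The CPS lemma would predict $\vartheta_m(Q') = \Phi(\vartheta_m(Q))$.

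The verification step, which is the main technical work but not deep, is to choose $m$ so that several broken lines contribute monomials in the same parallel ray class but arising from inputs attached in different orders. In the quantum torus these monomials combine as $q$-commuting factors and the CPS identity holds by Lemma \ref{CPS}; in the Hall algebra, however, the two orderings yield genuinely different Hall algebra elements (their difference lies in the ideal generated by commutators of parallel-class elements, which the quantum integration map $\s{I}_t$ kills but which are nonzero in the Hall algebra itself). The obstacle is to set up the example so that at least one such discrepancy survives in the final comparison of $\vartheta_m(Q')$ with $\Phi(\vartheta_m(Q))$: this requires that the broken-line enumeration on one side of the wall includes a configuration where a bend contributes a Hall-product of two parallel classes in the opposite order from what the wall-crossing automorphism on the other side produces. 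Exhibiting a single pair of monomials whose Hall-commutator is nonzero and which appear with different coefficients on the two sides of the putative identity will suffice to conclude the proposition.
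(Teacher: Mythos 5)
Your high-level diagnosis of the mechanism is right: the CPS proof in \cite{Man3} needs $\f{g}$ skew-symmetric and its action on $A$ skew-symmetric, and the Hall algebra fails both; the discrepancy one looks for lives exactly in the ideal killed by $\s{I}_t$ (or any $\s{I}^{\f{i}}$) but not in $H_{\reg}(Q,W)$ itself. But your proposed route through a rank-two (Kronecker-type) quiver is a genuine gap. The paper's counterexample uses the $A_3$-quiver, and that choice is not incidental: the exchange form $B$ for $A_3$ has a nontrivial kernel $\ker\pi^*=\langle e_1-e_3\rangle$, so $\pi^*(e_1)=\pi^*(e_3)$. After slicing as in Lemma \ref{slice}, this gives two \emph{distinct but parallel} initial walls $\f{d}_1\neq\f{d}_3$ in the plane, and it is precisely the presence of parallel-but-distinct walls (together with the transverse wall $\f{d}_2$ and the one scattered wall $\f{d}_{12}$) that allows two endpoints $\enpt_1,\enpt_2$ separated by $\f{d}_1$ to receive broken lines $\gamma_1,\gamma_2$ crossing the same set of initial walls but grouped differently: $\gamma_1$ picks up the sequence $\f{d}_3,\f{d}_1,\f{d}_2$ while $\gamma_2$ picks up $\f{d}_3,\f{d}_{12}$, yielding $a_{\gamma_1}-a_{\gamma_2}=(q-1)^{-3}[\kappa_1,[\kappa_2,[\kappa_3,z^m]]]$ by the Jacobi identity. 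In a rank-two quiver $B$ is nondegenerate, so there are no two distinct parallel walls; there is simply not enough room in the two-dimensional slice for this combinatorics, and the only rank-two candidates with enough walls (Kronecker with $\geq 2$ arrows) have an infinite scattering diagram whose central-ray wall function does not obviously isolate a single nonvanishing commutator. You would need to explain how your example produces the requisite pair of broken lines and, crucially, why no other broken lines cancel the discrepancy in the relevant graded piece.

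Two further steps in your plan are elided but are essential in the actual proof. First, the choice of $m$ is not free: the paper takes $m=\pi^*(e_1)+\pi^*(e_2)$ specifically so that $B^{\prin}\bigl((e_2,0),(e_3,m)\bigr)=0$, which is exactly why $\s{I}^{\f{i}}$ kills $[\kappa_2,[\kappa_3,z^m]]$ and hence why the discrepancy is invisible after integration but potentially nonzero in the Hall algebra. Second, one must actually verify the Hall-algebra nonvanishing; the paper does this concretely by computing $\kappa_2\kappa_3=\kappa_{23,0}+(q-1)\kappa_{23,\id}$ versus $\kappa_3\kappa_2=\kappa_{23,0}$ using Lemma \ref{CompSeriesLem}, then extracting a nonzero $\kappa_{123,0,0}$-coefficient after bracketing with $\kappa_1$. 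And third, one must argue that $\gamma_1,\gamma_2$ are the \emph{only} broken lines contributing in the $N^{\prin}$-degree $(m+e_1+e_2+e_3)$ and that crossing $\f{d}_1$ cannot produce any other contribution in that degree — without this, comparing two individual broken-line terms does not yet refute the identity $\vartheta_{m,\enpt_2}=\Phi_{\gamma,\f{D}}(\vartheta_{m,\enpt_1})$. Your proposal gestures at this last point (``exhibiting a single pair of monomials... with different coefficients on the two sides'') but does not supply the degree-counting argument that makes it rigorous.
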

Our proof is via the explicit construction of a  counterexample for an $A_3$-quiver, cf. \S \ref{CPS-counterexample-section}.

\subsection{Motivation}
When $B$ has rank $2$, the tropical disk counts of Theorem \ref{MainIntro} can be replaced with tropical curve counts, cf. \cite[Thm. 2.8]{GPS} and \cite[Cor. 4.9]{FS}.  In higher-dimensions this is only the case for certain limits of choices of $\stab$, cf. \cite[Thm 3.7]{Man3}.  The tropical curve versions are nice because in the classical limit they can be related via \cite{NS} to log Gromov-Witten invariants, cf. \cite[Prop. 5.3]{GPS}.  As the authors have learned from Mark Gross, the classical versions of our tropical disk counts should also have an algebraic Gromov-Witten theoretic meaning: according to the announced result \cite[Thm. 2.14]{GSInt}, they should be related to certain \textit{punctured} Gromov-Witten invariants (one also expects the existence of correspoding holomorphic disk counts defined from the perspective of open Gromov-Witten theory, e.g., as in \cite{lin2017correspondence} for the case of K3 surfaces).  One expects DT/GW correspondence results to follow from \cite[Thm. 2.14]{GSInt} combined with \cite[Lem. 11.4]{Bridge}.

On the other hand, the quantum tropical curve counts in rank $2$ are Block-G\"ottsche invariants \cite{BG}, which have been related to higher-genus Gromov-Witten invariants in \cite{Bou,Bou2} and to real curve counts in \cite{Mikq}.  Upcoming work of the second author will extend the correspondence of \cite{Mikq} to higher-dimensions, although an extension to tropical disks is still more distant.  Still, we hope that Theorem \ref{MainIntro} will lead to new refined DT/GW correspondence results, and we further hope that this correspondence will be enriched by our interpretation of tropical ribbon multiplicities in terms of moduli of composition series.

A version of Theorem \ref{MainIntro} for bipartite quivers was previously observed in \cite[Thm. 5.3]{FS}.  Their argument was based on the observation that in these cases, \eqref{MainEqnIntro} is equivalent to a representation-theoretic formula of Manschot-Pioline-Sen \cite{MPS}.  We therefore hope that our result may be related to some generalization of this MPS formula.  With this in mind, we strongly suspect that our tropical counts are closely related to the attractor flow trees studied by physicists, cf. \cite{AP} in particular, as well as \cite{WCS}.

We note that \cite{LMY}, which appeared immediately after this paper was first posted, deals with similar problems on scattering diagrams and tropical disks using a differential-geometric perspective.

\subsection{Outline of the paper}

In \S \ref{Prelim}-\ref{HallConstruction}, we review Joyce's construction \cite{joyce2007configurations} of the Hall algebra associated to a quiver with potential, following \cite[\S 4-5]{Bridge}. Then in \S \ref{kfold}-\ref{compalg}, we use \cite[Lem. 4.4]{BrIntro} to describe certain products in the Hall algebra in terms of moduli of composition series.  We review the quantum and classical integration maps in \S \ref{qtor}-\ref{Imap}.

We review the definition of scattering diagrams in \S \ref{Scattering}, and in Theorem \ref{KSGS} we generalize previously known results about initial scattering diagrams uniquely determining consistent scattering diagrams.  We then we review Bridgeland's Hall algebra scattering diagrams (and some variants) in \S \ref{HallScat}.  If the potential $W$ is genteel, then the Hall algebra scattering diagram is determined by an easily understood initial scattering diagram which we describe explicitly in \S \ref{HallInitialSection}.

We review the notion of tropical disks in \S \ref{TropDisks}, and in \S \ref{ScatTrop} we focus on the tropical ribbons and multiplicities associated to an initial scattering diagram.  The description of scattering diagrams in terms of tropical disks (Theorem \ref{TropicalScattering}) is given in \S \ref{ScatFromRib} and proven in \S \ref{TropScatProof}.  This is applied to the Hall algebra scattering diagram in \S \ref{MainThmSect} to prove our main results, Theorems \ref{HallScatThm} and \ref{Main-DT-Trop-Thm}.

We turn our attention to theta functions in \S \ref{ThetaSection}.  We review the definitions of broken lines and theta functions in \S \ref{brokentheta}, explaining how these apply to various flavors of cluster varieties in \S \ref{ClusterFlavors}.  Finally, in \S \ref{CPS-counterexample-section}, we work out an explicit counterexample to show that a foundational result of \cite{CPS} (cf. Lemma \ref{CPS}) does not extend to the Hall algebra setting (Proposition \ref{CPSfail}).

\subsection*{Acknowledgements}
The authors are very grateful to Ben Davison for patiently and repeatedly explaining the definition of the quantum integration map and for checking the corresponding part of our draft.  We also thank Tom Bridgeland, Lang Mou, and Tom Sutherland for helpful conversations.  Additionally, we are grateful to the anonymous referee for suggesting numerous improvements.

\section{The motivic Hall algebra of a quiver with potential}\label{HallSection}

\subsection{Preliminaries on quivers with potential and their representations}\label{Prelim}

Let $Q$ be a finite quiver.  Denote the sets of vertices and arrows of $Q$ as $(Q_0, Q_1)$.   Let $\bb{C}Q$ denote the path algebra of $Q$. 
Suppose that $Q$ is equipped with a finite \textbf{potential}, i.e., a finite linear combination of cycles, denoted $W \in \C Q$.
Define a two-sided ideal $I_W \subseteq \C Q$ on $Q$ by
\[
I_W = ( \partial_a W: a \in Q_1 ).
\]
Here, if $b_1\ldots b_k$ is a cycle of arrows in $Q$, then $$\partial_a(b_1\ldots b_k) = \sum_{i=1}^k \delta_{ab_i}b_{i+1}\ldots b_kb_1\ldots b_{i-1},$$
where $\delta_{ab_i}$ is $1$ if $a=b_i$ and $0$ otherwise.  Then the Jacobi algebra for $(Q,W)$ is the quotient algebra $\C Q/I_W$.  Let $\rep (Q,W) \coloneqq  \bmod \C Q/I_W$ be the abelian category of finite-dimensional representations of the quiver with potential $(Q,W)$, i.e., finite-dimensional left $\C Q/I_W$-modules.

Set $N= \Z^{Q_0}$, $M = \Hom_{\Z} (N, \Z)$, $M_{\R} = M \otimes_{\Z} \R$.  Let $\{e_i\}_{i \in Q_0}$ be the natural basis indexed by the vertices of $Q$.  Denote $N^{\oplus} \coloneqq  \{ \sum_i a_ie_i \in N | a_i \in \bb{Z}_{\geq 0} \ \forall  i\}$, and $N^+\coloneqq N^{\oplus}\setminus \{0\}$.
There is a group homomorphism 
\[\dim : K_0(\rep(Q,W)) \rightarrow N\]
sending a representation to its dimension vector.  For vertices $i,j\in Q_0$, let $a_{ij}$ denote the number of arrows from $i$ to $j$.  Let $B$ denote the integral skew-symmetric bilinear form on $N$ determined by setting
\begin{align}\label{Bdef}
    B(e_i,e_j) \coloneqq {a_{ji}-a_{ij}}.
\end{align}
We note that our $B$ is negative the pairing $\langle \cdot,\cdot \rangle$ used in \cite{Bridge}.  We will also use a second $\bb{Z}$-valued bilinear form $\chi$ on $N$ given by
\begin{align}\label{Euler}
    \chi(e_i,e_j)\coloneqq \delta_{ij}-a_{ij}.
\end{align}
Note that $B(a,b)\coloneqq {\chi(a,b)-\chi(b,a)}$.

It is well-known (cf. \cite[Lem 4.1]{Bridge}) that there is an algebraic moduli stack $\calM$ parameterizing all objects of the category $\rep(Q,W)$.  Briefly, objects of $\s{M}$ over a scheme $S$ are isomorphism classes of locally free finite-rank $\s{O}_S$-modules $E$, together with morphisms $\rho:\bb{C}Q/I_W \rar \End_S(E)$, cf. \cite[\S 4.2]{Bridge} for details.  Furthermore, $\calM$ decomposes as 
\begin{align}\label{decomp}
\calM=\bigsqcup_{d\in N^{\oplus}} \calM_d 
\end{align}
where $\calM_d$ is the open and closed substack parametrizing objects of dimension vector $d$. 
There is a 2-category of algebraic stacks over $\calM$, and we let $\St/ \calM$ denote the full subcategory consisting of objects $f : X \rightarrow \calM$ for which $X$ is of finite type over $\Spec \C$ and has affine stabilizers.  We similarly write $\St/\C$ for the analogous category of stacks over $\Spec \C$.

\subsection{Construction of the Hall algebra}\label{HallConstruction}
We now review the motivic Hall algebra developed by Joyce \cite{joyce2007configurations}, following the presentation of \cite[\S 5]{Bridge}. 

Let $K(\St/ \calM)$ be the free abelian group with basis given by isomorphism classes of objects of $\St / \calM$ modulo the relations given in \cite[Def. 5.1]{Bridge}.  In particular, one imposes the scissor relations
\begin{align*}
[f:X\rar \s{M}]=[f|_Y:Y\rar \s{M}]+[f|_U:U\rar \s{M}],  
\end{align*}
where $[f:X\rar \s{M}]$ is an object of $\St/\s{M}$, $Y\subset X$ is a closed substack, and $U\coloneqq X\setminus Y$.

One endows the group $K(\St/ \calM)$ with a $K(\St/ \C)$-module structure by setting $[X] \cdot [Y \rightarrow \calM ] = [ X \times Y \rightarrow \calM]$ and extending linearly. There is a unique ring homomorphism 
\begin{align}\label{Ups}
    \Upsilon:K(\St/ \C) \rar \bb{C}(t)
\end{align}
taking the class of a smooth projective variety $X$ over $\bb{C}$ to its Poincar\'e polynomial $$\sum_{k=1}^{2d} \dim_{\bb{C}} H^k(X_{\an},\bb{C})(-t)^k\in \bb{C}[q],$$ where $q\coloneqq t^2$ and $H^k(X_{\an},\bb{C})$ denotes singular cohomology.  For $X\in K(\St/\C)$, we will often denote
\begin{align*}
    |X|\coloneqq \Upsilon(X).
\end{align*}
Let 
\begin{align*}
    K_{\Upsilon}(\St/ \calM)\coloneqq  K(\St/ \calM)\otimes_{K(\St/ \C)} \bb{C}(t).
\end{align*}

As a $\bb{C}(t)$-module, the (motivic) \textbf{Hall algebra} $H(Q,W)$ is $K_{\Upsilon}(\St/ \calM)$.  To define the multiplication, the convolution product, on $H(Q,W)$ and make it into a $\bb{C}(t)$-algebra, we consider the stack $\calM^{(2)}$ of short exact sequences in $\rep(Q,W)$. 
There is a diagram
\begin{equation}\label{m2}
\begin{tikzcd}
\calM^{(2)} \arrow[d, "{(a_1,a_2)}"] \arrow[r, "b"]  & \calM \\
\calM \times \calM,  &
\end{tikzcd}
\end{equation}
where $a_1, a_2, b$ sends a short exact sequence 
\[
0 \rightarrow A_1 \rightarrow B \rightarrow A_2 \rightarrow 0
\]
to $A_1$, $A_2$, and $B$ respectively.
The convolution product is defined to be
\[
m = b_* \circ (a_1, a_2)^* : H(Q,W) \times H(Q,W) \rightarrow H(Q,W).
\]
This product can be expressed as
\[
[X_1 \xrightarrow{f_1} \calM] * [X_2 \xrightarrow{f_2} \calM] = [Z \xrightarrow{ b \circ h} \calM],
\]
where $Z$ and $h$ are defined by the Cartesian square
\[\begin{tikzcd}
Z              \arrow[d] \arrow[r, "h"]    & \calM^{(2)} \arrow[d, "{(a_1, a_2)}"] \arrow[r, "b"] & \calM \\
X_1 \times X_2 \arrow[r, "f_1 \times f_2"] & \calM \times \calM                               &
\end{tikzcd}\]

The following is due to Joyce \cite[Thm. 5.2]{joyce2007configurations}, see also \cite[Thm. 4.3]{BrIntro}.

\begin{thm}
The product $m$ gives $H(Q,W)$ the structure of an associative unital algebra over $\bb{C}(t)$.  
The unit element is $1 = [\calM_0 \subset \calM]$.
\end{thm}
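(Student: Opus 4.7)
The plan is to verify the two claims separately, following the standard motivic Hall algebra argument but paying attention to the $\bb{C}(t)$-module structure that was introduced via $\Upsilon$.

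For the unit, I would use the explicit description of the convolution product. Given $[X \xrightarrow{f} \s{M}]\in H(Q,W)$, I want to compute $1 * [X \xrightarrow{f} \s{M}]$ and $[X \xrightarrow{f} \s{M}] * 1$. The key observation is that a short exact sequence $0 \rar A_1 \rar B \rar A_2 \rar 0$ with $A_1 = 0$ forces $B \xrightarrow{\sim} A_2$, and similarly if $A_2 = 0$. This means the fiber product defining the convolution, when one of the factors is $[\s{M}_0 \subset \s{M}]$, reduces via the projection $b$ to the identity on $X$; checking this amounts to identifying a 2-Cartesian diagram whose left column recovers $X\rar \s{M}$. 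So both $1*[X\rar \s{M}]$ and $[X\rar \s{M}]*1$ equal $[X\rar \s{M}]$.

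For associativity, I would introduce the stack $\s{M}^{(3)}$ parametrizing pairs of composable short exact sequences, equivalently two-step filtrations $0 \subset A_1 \subset A_1 + A_2 \subset B$ of objects in $\rep(Q,W)$, with natural maps extracting the three subquotients and the total object $B$. There are two distinct ways to build up a length-three filtration using two applications of $\s{M}^{(2)}$: one first forms $0\subset A_1\subset C$ and then extends by $A_3$ ($C$ playing the role of the bottom subobject), and the other first forms $0\subset A_2\subset C'$ and then embeds into $B$ (so $A_1$ lies at the bottom). I would show both procedures produce stacks 2-isomorphic to $\s{M}^{(3)}$ via the appropriate 2-Cartesian squares, and that each triple convolution $(x_1*x_2)*x_3$ and $x_1*(x_2*x_3)$ can be written as $b_3{}_* \circ (a_1,a_2,a_3)^*$ where $b_3\colon \s{M}^{(3)}\rar \s{M}$ forgets the filtration. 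Equality then follows by the usual 2-categorical base change (``smooth base change for stacks''), which interchanges pullback and pushforward along the relevant squares; this step relies on the stacks involved having affine stabilizers and being of finite type, guaranteeing the manipulations stay inside $\St/\s{M}$.

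The main obstacle is the careful justification of base change in the 2-category of algebraic stacks: one must check that the squares relating the various iterated fiber products to $\s{M}^{(3)}$ are genuinely 2-Cartesian (not merely commutative up to 2-isomorphism in some weaker sense), and that the scissor relations in $K(\St/\s{M})$ together with the linearity over $K(\St/\C)$ are preserved when one base-changes to $\bb{C}(t)$ via $\Upsilon$. Once base change is granted, associativity reduces to the set-theoretic bijection between the two descriptions of three-step filtrations, which is automatic.

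Finally, associativity and the unit axiom together with the $\bb{C}(t)$-bilinearity of $m$ (which follows because $m$ is defined via pullback and pushforward of stack maps, both of which are $K(\St/\C)$-linear, and hence induce $\bb{C}(t)$-linear maps after tensoring by $\bb{C}(t)$ along $\Upsilon$) give the required $\bb{C}(t)$-algebra structure.
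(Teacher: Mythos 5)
The paper does not prove this theorem; it cites Joyce \cite[Thm.\ 5.2]{joyce2007configurations} and Bridgeland \cite[Thm.\ 4.3]{BrIntro}, and your sketch faithfully reproduces the standard argument used there: the unit verification via the degenerate short exact sequences with a zero end, and associativity via the stack $\s{M}^{(3)}$ of two-step filtrations together with base change across the two 2-Cartesian squares expressing each triple product as $b_{3*}\circ(a_1,a_2,a_3)^*$. Your closing remark that the $\C(t)$-algebra structure descends from the $K(\St/\C)$-algebra structure by the base change $K_\Upsilon(\St/\s{M}) = K(\St/\s{M})\otimes_{K(\St/\C)}\C(t)$ is the right way to pass from the cited result to the statement as phrased in this paper. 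Two minor points of precision: the informal expression ``$0\subset A_1\subset A_1+A_2\subset B$'' should be a filtration $0\subset E_1\subset E_2\subset B$ with prescribed subquotients (subobjects in an abelian category do not literally add this way), and the base change used is not smooth base change in the geometric sense but the formal compatibility of $(-)^*$ and $(-)_*$ for Grothendieck groups of stacks over a 2-Cartesian square, valid because the relevant morphisms are of finite type with affine stabilizers --- exactly the class of objects in $\St/\s{M}$.
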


We note that the decomposition \eqref{decomp} of $\s{M}$ induces an $N^{\oplus}$-grading
\begin{align}\label{Hgrade}
H(Q,W)=\bigoplus_{d\in N^{\oplus}} H(Q,W)_d,
\end{align}
where $H(Q,W)_d$ is the submodule of $K_{\Upsilon}(\St/\s{M})$ generated by objects of the form $[X\rar \s{M}_d\subset \s{M}]$.

\subsection{$k$-fold products}\label{kfold}

We will also need a description of the $k$-fold product $m_k:H(Q,W)^{\otimes k} \rar H(Q,W)$.  For this we follow \cite[\S 4.1-4.2]{BrIntro}.  Let $\s{M}^{(k)}$ denote the algebraic moduli stack of $k$-flags.  That is, the objects of $\s{M}^{(k)}$ over a scheme $S$ are isomorphism classes of $k$-tuples of objects $(E_1,\rho_1),\ldots,(E_k,\rho_k)$ of $\s{M}(S)$, together with monomorphisms
\begin{align}\label{kFlag}
0=E_0\hookrightarrow E_1 \hookrightarrow \cdots \hookrightarrow E_k
\end{align}
respecting the maps $\rho_i$ and such that each factor $F_i\coloneqq E_i/E_{i-1}$ is flat over $S$.  Given another scheme $T$, an object $(E_1',\rho_1'),\ldots,(E_k',\rho_k')$ over $T$, and a morphism $f:T\rar S$, a morphism in $\s{M}^{(k)}$ lying over $f$ is a collection of isomorphisms of sheaves $\Phi_i:f^*(E_i)\rar E_i'$ respecting the maps $\rho_i$ and the maps in the sequences of monomorphisms as in \eqref{kFlag}.

For each $i=1,\ldots,k$, we have a morphism of stacks $a_i:\s{M}^{(k)} \rar \s{M}$ taking an object as in \eqref{kFlag} to its $i$-th factor $F_i=E_i/E_{i-1}$.  We also have another morphism $b:\s{M}^{(k)} \rar \s{M}$ taking the object as in \eqref{kFlag} to the final term $(E_k,\rho_k)$ of the sequence.  One easily sees that the stack $\s{M}^{(2)}$, together with these morphisms $a_1,a_2,b$, is equivlaent to the data we had when defining $\s{M}^{(2)}$ as the stack of short exact sequences above.  We now obtain a diagram generalizing \eqref{m2}:
\[
\begin{tikzcd}
\calM^{(k)} \arrow[d, "{(a_1,\ldots,a_k)}"] \arrow[r, "b"]  & \calM \\
\calM^{k}  &
\end{tikzcd}
\]
\begin{lem}[\cite{BrIntro}, Lemma 4.4]\label{mkLem}
The \textbf{$k$-fold product} $m_k:H(Q,W)^{\otimes k} \rar H(Q,W)$ is given by
\begin{align*}
    m_k\coloneqq b_*\circ (a_1,\ldots,a_k)^*.
\end{align*}
\end{lem}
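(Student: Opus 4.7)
The plan is to proceed by induction on $k$, with base case $k=2$ being precisely the definition of the binary convolution product $m$ given in the preceding paragraphs. The key geometric input for the inductive step is the identification
\begin{align*}
\s{M}^{(k)} \cong \s{M}^{(k-1)} \times_{\s{M}} \s{M}^{(2)},
\end{align*}
where $\s{M}^{(k-1)}$ maps to $\s{M}$ via $b$ (sending a $(k-1)$-flag to its top term $E_{k-1}$) and $\s{M}^{(2)}$ maps to $\s{M}$ via $a_1$ (sending a short exact sequence $0 \to E_{k-1} \to E_k \to F_k \to 0$ to $E_{k-1}$). This is essentially tautological: a $k$-flag is uniquely reconstructed from its truncation to a $(k-1)$-flag together with the short exact sequence witnessing the final monomorphism. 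Under this identification, the maps $a_1,\ldots,a_{k-1}$ on $\s{M}^{(k)}$ factor through the projection to $\s{M}^{(k-1)}$ followed by the corresponding maps there, while $a_k$ and $b$ on $\s{M}^{(k)}$ factor through the projection to $\s{M}^{(2)}$ followed by $a_2$ and $b$, respectively.

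Granting this, I would unpack
\begin{align*}
m \circ (m_{k-1} \otimes \id)(x_1 \otimes \cdots \otimes x_k),
\end{align*}
substitute the inductive hypothesis for $m_{k-1}$, and apply base change along the Cartesian square above to swap the pushforward along $b:\s{M}^{(k-1)}\to\s{M}$ with the pullback along $a_1:\s{M}^{(2)}\to\s{M}$. After this rearrangement, the pullbacks combine, via the universal property of the fiber product, into a single pullback along $(a_1,\ldots,a_k):\s{M}^{(k)}\to\s{M}^{k}$, while the pushforwards combine into a single pushforward along $b:\s{M}^{(k)}\to\s{M}$. Associativity of $m$, already established by Joyce, ensures that the iterated product $m\circ(m_{k-1}\otimes\id)$ really computes the intended $k$-fold product $m_k$.

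The main subtlety is verifying that base change is valid in this motivic Hall algebra setting. This requires the morphisms in question to be representable and of finite type after restriction to bounded dimension vectors, and the stacks involved to have affine stabilizers. Both conditions are standard for moduli stacks of representations of quivers with potential and are implicit in the foundational setup of \S \ref{HallConstruction}, since automorphism groups of finite-dimensional representations are open subgroups of products of general linear groups. Once these verifications are in place, the inductive step proceeds without further obstruction, and the lemma follows.
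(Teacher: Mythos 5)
The paper does not give its own proof of this lemma---it cites \cite[Lem.\ 4.4]{BrIntro}---and your argument is, in essence, Bridgeland's: induct on $k$, using the tautological identification $\s{M}^{(k)} \cong \s{M}^{(k-1)} \times_{\s{M}} \s{M}^{(2)}$ (with $\s{M}^{(k-1)} \to \s{M}$ via $b$ and $\s{M}^{(2)} \to \s{M}$ via $a_1$) and base change in the motivic Grothendieck group to commute $b_*$ past $a_1^*$. Your factorization of the maps $a_1,\ldots,a_k,b$ on $\s{M}^{(k)}$ through the two projections is correct, and your remarks on the hypotheses needed for base change (finite type, affine stabilizers, bounded dimension vectors) correctly identify the technical conditions Bridgeland verifies in his \S 3.

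One small point worth making explicit when you write this out: the 2-Cartesian square you actually invoke in the base-change step is not quite the one you state, but rather
\begin{equation*}
\begin{tikzcd}
\s{M}^{(k)} \arrow[r] \arrow[d] & \s{M}^{(2)} \arrow[d, "{(a_1,a_2)}"] \\
\s{M}^{(k-1)} \times \s{M} \arrow[r, "{b \times \id}"] & \s{M} \times \s{M}
\end{tikzcd}
\end{equation*}
since the inner $m_{k-1}$ only acts on the first $k-1$ tensor factors while the last factor is carried along untouched. This square follows from the one you wrote by taking the product with $\id_{\s{M}}$ on the bottom row, so it is a harmless elaboration rather than a gap, but unpacking $m\circ(m_{k-1}\otimes\id)$ carefully does require it.
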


\subsection{$H_{\reg}$ and the composition algebra}\label{compalg}

Next, recalling the notation $q=t^2$, let \[\bb{C}_{\reg}(t)\coloneqq \bb{C}[t,t^{-1}][(1+q+q^2+\ldots+q^k)^{-1}:k\geq 1]\subset \bb{C}(t).\]
Let $H_{\reg}(Q,W)$ be the $\bb{C}_{\reg}(t)$-submodule of $H(Q,W)$ generated by elements of the form $$[f: X \to \calM]$$ such that $X$ is a variety over $\C$ (so in particular, $X\in \St/\C$, and so we can apply $\Upsilon$ to $X$). 
\begin{lem}[\cite{Bridge}, Thm. 5.2]\label{regLem} $H_{\reg}(Q,W)$ is closed under the Hall algebra product and thus forms an $N^{\oplus}$-graded $\C_{\reg}(t)$-subalgebra.  Furthermore, $H_{\reg}(Q,W)$ forms a Poisson algebra under the bracket 
\begin{align}\label{regPois}
    \{a,b\}\coloneqq (t-t^{-1})^{-1}[a,b].
\end{align}
\end{lem}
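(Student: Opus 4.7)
The proof has two main components: closure under the Hall product, and construction of the Poisson bracket.

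\textbf{Closure.} For $[f_i : X_i \rar \s{M}]$ with each $X_i$ a variety, the product $[f_1] * [f_2]$ equals $[Z \rar \s{M}]$ where $Z = (X_1 \times X_2) \times_{\s{M} \times \s{M}} \s{M}^{(2)}$. The key geometric input is that the fiber of $(a_1, a_2) : \s{M}^{(2)} \rar \s{M} \times \s{M}$ over a pair of representations $(A_1, A_2)$ is $\Ext^1(A_2, A_1) \times B\Hom(A_2, A_1)$, with $\Hom(A_2, A_1)$ acting trivially on the extension class (automorphisms of a short exact sequence that restrict to the identity on $A_1$ and $A_2$ are parametrised by maps $A_2 \rar A_1$). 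I would stratify $X_1 \times X_2$ into locally closed pieces where $\dim \Hom(A_2, A_1)$ and $\dim \Ext^1(A_2, A_1)$ are constant (a constructible condition in the dimension vectors), and further stratify to trivialise the Ext and Hom bundles. Each stratum then contributes a variety class times $[B\Ga]^m = q^{-m}$ for some $m \geq 0$. Since $q = t^2$ is a unit in $\bb{C}_{\reg}(t)$, these terms all lie in $H_{\reg}$, proving closure. The $N^{\oplus}$-grading is preserved because $(a_1, a_2)$ and $b$ respect dimension vectors.

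\textbf{Poisson bracket.} The main step is showing that commutators $[a, b]$ for $a, b \in H_{\reg}$ are divisible by $(t - t^{-1})$ in $H_{\reg}$, so that $\{a, b\} = (t - t^{-1})^{-1}[a, b]$ lies in $H_{\reg}$. Since $t - t^{-1} = (q - 1)/t$ and $t$ is a unit, this is equivalent to commutativity of the induced product on $H_{\reg}/(q - 1) H_{\reg}$. The strategy is to compare $[f_1] * [f_2]$ and $[f_2] * [f_1]$---both pushforwards from stacks of short exact sequences, distinguished by which factor is the subobject versus the quotient. In the hereditary case ($W = 0$), a direct comparison of fiber contributions (tracked by the dimensions of the relevant $\Hom$ and $\Ext^1$ groups) shows they differ by a factor $q^{B(\alpha_1, \alpha_2)}$ with $\alpha_i$ the dimension vector of $f_i$; this factor equals $1$ at $q = 1$. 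For general $\rep(Q, W)$, which is not hereditary, the analogous statement requires the motivic vanishing cycles / dimensional reduction formalism of Kontsevich-Soibelman that underlies Bridgeland's construction.

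\textbf{Leibniz and Jacobi.} These follow formally from associativity. Leibniz comes from $[a, bc] = [a, b]c + b[a, c]$ after dividing by $(t - t^{-1})$. For Jacobi, iterating the commutator-divisibility argument once more (applied to $\{a,b\}$ and $c$) shows $[[a, b], c]$ is divisible by $(t - t^{-1})^2$, so $\sum_{\text{cyc}} \{\{a, b\}, c\} = (t - t^{-1})^{-2} \sum_{\text{cyc}} [[a, b], c] = 0$ by the usual Jacobi identity for commutators in the associative algebra $H(Q, W)$. The main obstacle is establishing commutativity at $q = 1$ in the non-hereditary setting, which is where the deep motivic vanishing cycles machinery of Kontsevich--Soibelman becomes essential; the closure and Leibniz/Jacobi steps are comparatively mechanical once this input is in hand.
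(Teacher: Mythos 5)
The paper offers no proof of this lemma: it imports it verbatim from Bridgeland (\cite{Bridge}, Thm.~5.2), who in turn defers to Joyce's motivic Hall-algebra machinery in \cite{joyce2007configurations}. Your closure argument has the right skeleton and matches the Joyce approach: stratify $X_1\times X_2$ so that the fiber $\Ext^1(A_2,A_1)\times B\Hom(A_2,A_1)$ of $(a_1,a_2)$ becomes a trivial affine bundle times a copy of $B\Hom$, fold the affine bundle into the variety class, and observe that the residual factor $q^{-\dim\Hom}$ is a unit in $\bb{C}_{\reg}(t)$. The grading claim is also fine.

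The Poisson half has a genuine gap, and it lies exactly in the two spots you flag as shaky. The asserted relation $[f_1]*[f_2]=q^{B(\alpha_1,\alpha_2)}\,[f_2]*[f_1]$ is not a valid Hall-algebra identity even for $W=0$: the two products are pushed forward along different correspondences (middle terms of $0\to A_1\to B\to A_2\to 0$ versus of $0\to A_2\to B\to A_1\to 0$) and land on different loci of $\s{M}_{\alpha_1+\alpha_2}$, so they are not proportional by a scalar. What one actually gets, after a common stratification of the two fiber-product stacks, is that matched motivic contributions to the two products differ by factors of the form $q^a-q^b$ with $a,b\in\Z$, each of which is automatically divisible by $q-1$; this needs no Euler-form identity and therefore no hereditariness hypothesis. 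More importantly, invoking ``motivic vanishing cycles / dimensional reduction'' when $W\neq 0$ is a red herring: vanishing cycles enter only in the construction of the integration map $\s{I}_t$ in \S\ref{Imap}, not in the Hall algebra. The algebra $H(Q,W)$, the subalgebra $H_{\reg}(Q,W)$, and the Poisson bracket depend on $W$ solely through the choice of abelian category $\rep(Q,W)$; Joyce's proof of semi-classical commutativity is a purely motivic stratification argument valid for any algebraic abelian category satisfying his axioms, hereditary or not. Your Leibniz step and your Jacobi step (reducing to the Jacobi identity for commutators once divisibility by $(t-t^{-1})$, hence by $(t-t^{-1})^2$ for double brackets of regular elements, is known) are formally correct.
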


Now, for any representation $A\in \ob(\rep(Q,W))$, let $p_A$ denote the corresponding (stacky) point in $\s{M}$, and let $\delta_A$ be the element of $H(Q,W)$ corresponding to the inclusion $[p_A\hookrightarrow \s{M}]$.  Let
\begin{align}\label{kappaDef}
    \kappa_A\coloneqq |\Aut(A)|\delta_A\in H(Q,W)
\end{align}
be the element $[\Spec \bb{C}\rar p_A\in \s{M}]$.  Clearly, $\kappa_A$ is in $H_{\reg}(Q,W)$. 

Given a collection of objects $A_1,\ldots,A_k,M\in \ob(\rep(Q,W))$, let $\wt{\f{Flag}}(A_1,\ldots,A_k;M)$ denote the space of filtrations
 \begin{align*}
     0=M_0 \subset M_1 \subset \ldots \subset M_k = M
 \end{align*}
 of $M$ such that $M_i/M_{i-1}\cong A_i$ for each $i$.  We also consider the quotient stack $\f{Flag}(A_1,\ldots,A_k;M)$ in which the identification of $M_k$ with $M$ is no longer part of the data of an object.  This has the effect of enlarging the automorphism groups since now automorphisms of $M$ induce automorphisms of flags, so
 \begin{align}\label{Flag-wtFlag}
    \f{Flag}(A_1,\ldots,A_k;M)= [\wt{\f{Flag}}(A_1,\ldots,A_k;M)/\Aut(M)].
 \end{align}
By Lemma \ref{mkLem}, we have the following:
\begin{lem}\label{CompSeriesLem}
Given a collection of objects $A_1,\ldots,A_k\in \ob(\rep(Q,W))$, let $d=\sum_{j=1}^k \dim(A_j) \in N^{\oplus}$.  The product $\kappa_{A_1}\cdots \kappa_{A_k}$ is represented by a complex variety $\f{Flag}(A_1,\ldots,A_k)\rar \s{M}_d$ whose fiber over a point $p_M$ is $\wt{\f{Flag}}(A_1,\ldots,A_k;M)$.  Equivalently, the fiber of $\f{Flag}(A_1,\ldots,A_k)$ over the geometric point $[\Spec \bb{C} \rar p_M\in \s{M}_d]$ is $\f{Flag}(A_1,\ldots,A_k;M)$.
\end{lem}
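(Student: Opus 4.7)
The plan is to apply Lemma \ref{mkLem} and identify the resulting pullback-pushforward with the claimed flag variety. First, unwind the definition: each $\kappa_{A_i} = [\Spec \bb{C} \xrightarrow{A_i} \s{M}]$ is the class of a chosen geometric representative of the isomorphism class of $A_i$, so the external product is represented by $[(\Spec \bb{C})^k \to \s{M}^k]$. By the formula $m_k = b_* \circ (a_1,\ldots,a_k)^*$ of Lemma \ref{mkLem}, the product $\kappa_{A_1}\cdots \kappa_{A_k}$ is therefore represented by $[b : Z \to \s{M}]$, where $Z$ is the Cartesian fiber product
\[
Z \;=\; \s{M}^{(k)} \times_{\s{M}^k} (\Spec \bb{C})^k
\]
formed via $(a_1,\ldots,a_k)$ on the $\s{M}^{(k)}$-side and the $k$-tuple of chosen representatives on the other.

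Second, unwind the moduli description of $\s{M}^{(k)}$ recalled in \S\ref{kfold}: an object of $Z$ over a base $S$ is a $k$-flag $0 = E_0 \hookrightarrow E_1 \hookrightarrow \cdots \hookrightarrow E_k$ of objects of $\rep(Q,W)$ over $S$ together with identifications of each successive quotient $F_i = E_i/E_{i-1}$ with the chosen representative of $A_i$. Post-composing with $b$ remembers the top $E_k$ of the flag as a point of $\s{M}_d$ (noting $d = \sum_j \dim A_j$). This matches the moduli-theoretic description of the flag variety $\f{Flag}(A_1,\ldots,A_k) \to \s{M}_d$ parameterizing $k$-flags inside objects of $\s{M}_d$ whose successive quotients are isomorphic to the $A_i$, so this identification gives the desired representative.

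The fiber descriptions are then immediate from base change along the square defining $Z$. The fiber over a specific geometric point $\Spec \bb{C} \xrightarrow{M} p_M \subset \s{M}_d$ consists of filtrations $0 = M_0 \subset M_1 \subset \cdots \subset M_k = M$ with $M_i/M_{i-1} \cong A_i$, which is exactly the variety $\wt{\f{Flag}}(A_1,\ldots,A_k;M)$. Restoring the $\Aut(M)$-isotropy present at the stacky point $p_M$ itself enlarges this to the quotient stack $[\wt{\f{Flag}}(A_1,\ldots,A_k;M)/\Aut(M)]$, which equals $\f{Flag}(A_1,\ldots,A_k;M)$ by \eqref{Flag-wtFlag}.

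The main technical point on which I would spend the most care is the second step: a priori the fiber product $Z$ carries extra inertia coming from the relative inertia of $(a_1,\ldots,a_k) : \s{M}^{(k)} \to \s{M}^k$, namely automorphisms of a flag that fix each trivialized factor $F_i$ but twist the extensions between them (elements of $\prod_i \Hom(A_{i+1},A_i)$ and similar for longer compositions). One must check that $[Z \to \s{M}]$ and $[\f{Flag}(A_1,\ldots,A_k) \to \s{M}]$ nevertheless coincide in $K(\St/\s{M})$, absorbing this extra inertia via the scissor relations together with the rigidifying effect of having chosen geometric representatives for each $A_i$ in the definition of $\kappa_{A_i}$. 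This bookkeeping, matching the automorphism contributions from both sides, is the crux of the argument.
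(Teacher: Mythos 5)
Your approach matches the paper's: Lemma \ref{CompSeriesLem} is stated as an immediate corollary of Lemma \ref{mkLem}, and your identification of the fiber product $Z=\s{M}^{(k)}\times_{\s{M}^k}\Spec\bb{C}$ as the stack of $k$-flags with trivialized subquotients is the correct reading of that lemma.

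Your closing worry about extra inertia is real, but the ``crux'' is not quite the one you name. The stack $Z$ genuinely has nontrivial unipotent stabilizers, and there is no variety over $\s{M}_d$ with the stated fibers that represents the same class, so the inertia cannot be ``absorbed'' via scissor relations. Concretely, for $k=2$ and $A_1=A_2=S_i$: every object of $Z$ over $\Spec\bb{C}$ is a flag $0\subset L\subset\bb{C}^2$ with trivializations of $L$ and $\bb{C}^2/L$, all such objects are isomorphic, and the automorphism group is $U_2(\bb{C})$, so $Z\simeq BU_2$ with class $q^{-1}\kappa_{2i}=\kappa_i^2$ as in \eqref{kappaik}. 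By contrast, \emph{any} morphism $\bb{P}^1\rar\s{M}_{2e_i}$ has class $(1+q)\kappa_{2i}$ (scissor $\bb{P}^1$ into $\bb{A}^1\sqcup\{\mathrm{pt}\}$ and note that rank-two bundles on each piece are trivial, so each piece factors through the atlas point). This is already the picture in Example \ref{kappai}, which records $\f{Flag}(S_i,\ldots,S_i;S_{ki})$ as a single stacky point with stabilizer $U_k(\bb{C})$. So the ``complex variety'' phrasing in Lemma \ref{CompSeriesLem} is loose, and the correct second step of your argument is to take $Z$ itself as $\f{Flag}(A_1,\ldots,A_k)$ on the nose, rather than to try to replace it by a variety. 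A further small correction in the same spirit: the $2$-fiber of $Z$ over the atlas $\Spec\bb{C}\rar p_M$ consists of filtrations of $M$ \emph{together with chosen isomorphisms} $M_i/M_{i-1}\cong A_i$, i.e.\ a $\prod_i\Aut(A_i)$-torsor over the bare flag variety; this reading of $\wt{\f{Flag}}(A_1,\ldots,A_k;M)$ is the one consistent with $\delta_{ki}=\kappa_i^k/|\wt{\f{Flag}}(S_i,\ldots,S_i;S_{ki})|$ in Example \ref{kappai}.
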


For each vertex $i\in Q_0$, we have an associated simple representation $S_i\in \rep(Q,W)$ of dimension vector $e_i$.  We denote $\delta_i\coloneqq \delta_{S_i}$ and $\kappa_i\coloneqq \kappa_{S_i}$.  More generally, for each $k\in \bb{Z}_{\geq 0}$, we will write the semisimple representation $S_{i}^{\oplus k}$ as $S_{ki}$, and we will write $\delta_{ki}\coloneqq \delta_{S_{ki}}$ and $\kappa_{ki}\coloneqq \kappa_{S_{ki}}$.   As in \cite[Ex. 5.20]{joyce2007configurations}, we define the \textbf{composition algebra} $\s{C}(Q,W)$ to be the subalgebra of $H_{\reg}(Q,W)$ generated by the elements $\kappa_{i}$ for $i\in Q_0$.  By Lemma \ref{CompSeriesLem}, products of the elements $\kappa_i$ are given in terms of spaces of composition series.

\begin{eg}\label{kappai}
For $i\in Q_0$, let us apply Lemma \ref{CompSeriesLem} to $\kappa_{i}^k$.  The only point in $\s{M}_{ke_i}$ is the one corresponding to the semisimple representation $S_{ki}$.  Furthermore, $\f{Flag}(S_i,\ldots,S_i;S_{ki})$ ($S_i$ occurring $k$ times before the semicolon) contains only one (stacky) point---all maximal flags of $S_{ki}$ are related by automorphisms of $S_{ki}$.  The stabilizer group for this point (i.e., the space of automorphisms of $\bb{C}^k$ which fix a maximal flag) is the unipotent group $U_k(\C)$.  Thus, 
 \begin{align}\label{kappaik}
     \kappa_{ki} = |U_k(\C)| \kappa_i^k = q^{k(k-1)/2} \kappa_i^k.
 \end{align}
 Using \eqref{kappaDef} and the fact that
 \begin{align*}
     |\Aut(S_{ki})| = |\GL_k(\C)| =q^{k(k-1)/2}\prod_{j=1}^k (q^j-1),
 \end{align*}
we can re-express \eqref{kappaik} as
\begin{align}\label{delta-ki-kappa}
    \delta_{ki}&=\frac{1}{\prod_{j=1}^k (q^j-1)} \kappa_i^k.
\end{align}
Alternatively, this could be realized directly as $$\delta_{ki}=\kappa_i^k/|\wt{\f{Flag}}(S_i,\ldots,S_i;S_{ki})|$$ ($S_i$ again appearing $k$ times before the semicolon). \eqref{delta-ki-kappa} will be useful in \S \ref{HallInitialSection}.  
 \end{eg}

\subsection{The quantum torus algebra}\label{qtor}

Let $\bb{C}_t[N^{\oplus}]$ denote the \textbf{quantum torus algebra}, by which we mean the $N^{\oplus}$-graded algebra defined by:
\begin{align*}
    \bb{C}_t[N^{\oplus}]\coloneqq \bb{C}_{\reg}(t)[z^n:n\in N^{\oplus}]/\langle z^{n_1}z^{n_2}=t^{B(n_1,n_2)}z^{n_1+n_2}:n_1,n_2\in N^{\oplus}\rangle
\end{align*}
(the monomials $z^n$ adjoined here are non-commuting).  This forms a \textbf{Poisson algebra} under the bracket
\begin{align}\label{qPois}
    \{a,b\}\coloneqq \frac{[a,b]}{t-t^{-1}}.
\end{align}
Note that
\begin{align*}
    \{z^{n_1},z^{n_2}\}=[B(n_1,n_2)]_tz^{n_1+n_2},
\end{align*} 
where for any $a\in \bb{Z}$,
\begin{align}\label{at}
    [a]_t\coloneqq \frac{t^{a}-t^{-a}}{t-t^{-1}} = \sign(a)(t^{-|a|+1} + t^{-|a|+3} + \ldots + t^{|a|-3} + t^{|a|-1}).
\end{align}
The usual commutative algebra $\C[N^{\oplus}]$ also forms a Poisson algebra, with bracket defined by 
\begin{align}\label{Pois}
    \{z^{n_1},z^{n_2}\}\coloneqq B(n_1,n_2)z^{n_1+n_2}.
\end{align}
Note that there is a surjective homomorphism of Poisson algebras defined by 
\begin{align*}
\pi_{t\mapsto 1}:\bb{C}_t[N^{\oplus}]\rar \bb{C}[N^{\oplus}], \quad  t\mapsto 1,~ z^n \mapsto z^n.    
\end{align*}
\begin{rmk}\label{PvC}
Note that $\bb{C}_t[N^{\oplus}]$ viewed as a Lie algebra with its Poisson bracket is isomorphic as a Lie algebra to $(t-t^{-1})^{-1}\cdot \bb{C}_t[N^{\oplus}]$ with its commutator bracket via the map $$x\mapsto \frac{x}{t-t^{-1}}.$$
We may thus view $\pi_{t\mapsto 1}$ as a Lie algebra homomorphism $(t-t^{-1})^{-1}\cdot \bb{C}_t[N^{\oplus}]\rar \bb{C}[N^{\oplus}]$. 
Similarly, as noted in \cite[\S 5.9]{Bridge}, $H_{\reg}(Q,W)$ with the bracket from \eqref{regPois} is isomorphic as a Lie algebra to $(t-t^{-1})^{-1}\cdot H_{\reg}(Q,W)$ with its commutator bracket.  In \S \ref{Imap}, we will discuss the ``integration map'' $\s{I}=\pi_{t\mapsto 1}\circ \s{I}_t$ as a homomorphism of Poisson algebras $H_{\reg}(Q,W)\rar \bb{C}[N^{\oplus}]$, but this can also be viewed as a homomorphism of Lie algebras $(t-t^{-1})^{-1}\cdot H_{\reg}(Q,W)\rar \bb{C}[N^{\oplus}]$.  Similarly, we may view the quantum integration map $\s{I}_t:H_{\reg}(Q,W)\rar \bb{C}_t[N^{\oplus}]$ as a Lie algebra homomorphism $(t-t^{-1})^{-1}\cdot H_{\reg}(Q,W) \rar (t-t^{-1})^{-1} \bb{C}_t[N^{\oplus}]$.

In place of the quantum torus algebra $\C_t[N^{\oplus}]$ considered above, one may use the quantum tropical vertex group of \cite[\S 6.1]{KScoha} or the quantum torus Lie algebra of \cite[\S 2.2.3]{DMan}.  These alternatives are nice because they still admit well-defined Poisson algebra maps $\pi_{t\mapsto 1}$ to $\C[N^{\oplus}]$, but now the Poisson bracket for the domain is simply the commutator bracket.  While this is often convenient, we shall not use this viewpoint here.
\end{rmk}

\subsection{The integration map}\label{Imap}

There are several constructions of \textbf{(quantum) integration maps} in the literature, i.e., homomorphisms (of algebras, Lie algebras, or Poisson algebras) from $H(Q,W)$ or $H_{\reg}(Q,W)$ to the (quantum) torus algebra.  Reineke \cite[Lem. 6.1]{ReinHN} first constructed the analog of such a quantum integration map for finitary Hall algebras associated to quivers without potential.  Joyce \cite[\S 6]{joyce2007configurations} then constructed classical and quantum integration maps with domain $H_{\reg}(Q,0)$.  The classical version of Joyce's map (of Lie algebras) was generalized to quivers with potential in \cite[\S 7]{JS} (cf. \cite[Thm. 11.1]{Bridge} for an interpretation as a map of Poisson algebras).  On the other hand, a very general construction of algebra homomorphisms from a full Hall algebra to the ``motivic quantum torus algebra'' (which can then be further integrated to the usual quantum torus algebra)  has been outlined by Kontsevich and Soibelman \cite[\S 6]{KSmotivic}.  Making this more precise and more algebraic, in \cite[\S 7]{KScoha}, Kontsevich and Soibelman defined a (monodromic) mixed Hodge structure (building off Saito's theory of mixed Hodge modules \cite{Saito}) on the equivariant cohomology of the vanishing cycle complex, and then \cite{DMdim1} and \cite{BenPos} built on these ideas to rigorously define a quantum integration map $\s{I}_t$.

We give a brief sketch of this \textbf{integration map} \begin{align*}
    \s{I}_t:H_{\reg}(Q,W)\rar \bb{C}_t[N^{\oplus}]
\end{align*} 
essentially as in \cite[\S 3.3]{BenPos}.  We then use this to compute the integration in the simplest cases.  We note that by the definitions of the Poisson structures in \eqref{regPois} and \eqref{qPois}, it is clear that $\s{I}_t$ being a map of algebras implies it is also a map of Poisson algebras, thus also giving maps of Lie algebras as in Remark \ref{PvC}.

Recall that $\s{M}$ is the moduli stack of objects in $\rep(Q,W)\coloneqq \bmod \C Q/I_W$.  Let $\s{M}^{\circ}$ be the moduli stack of objects in $\rep(Q,0)$.  Given an arrow $a\in Q_1$, let $t(a),h(a)\in Q_0$ denote the tail and head of $a$ respectively.  For any $i\in Q_0$ and $d\in N^{\oplus}$, let $d_i$ denote the corresponding component of $d$.  Denote
\begin{align*}
    \wt{\s{M}}_d^{\circ}\coloneqq \prod_{a\in Q_1}\Hom_{\bb{C}}(\C^{d_{t(a)}},\C^{d_{h(a)}}),
\end{align*}
and 
\begin{align*}
    \GL_d\coloneqq \prod_{i\in Q_0} \GL_{d_i}(\bb{C}).
\end{align*}
Then $\s{M}^{\circ}=\bigsqcup_{d\in N^{\oplus}} \s{M}_d^{\circ}$ where $\s{M}_d^{\circ}$ is the stack-theoretic quotient
\begin{align}\label{M-circ-stack}
    \s{M}_d^{\circ}=\wt{\s{M}}_d^{\circ}/\GL_d,
\end{align} 
where the action by $\GL_d$ is the one induced by the conjugation action of $\GL_{d_i}(\bb{C})$ on $\C^{d_i}$ for each $i\in Q_0$.

Viewing elements of $\wt{\s{M}}_d^{\circ}$ as modules over the path-algebra $\C Q$, we see that multiplication by $W$ gives an endomorphism of $\wt{\s{M}}_d^{\circ}$.  Since the trace is invariant under the action of $\GL_d$, we obtain a function $$\Tr(W):\s{M}^{\circ}\rar \bb{C},$$ 
the critical locus of which recovers $\s{M}$:
\begin{align*}
    \s{M}=\crit(\Tr(W))\subset \s{M}^{\circ}.
\end{align*}

Let $Y$ be a smooth complex variety and let $f:Y\rar \bb{C}$ be a regular function on $Y$.  The corresponding vanishing cycle functor $\varphi_f$ is defined as follows (following \cite[\S 3.1]{BenPos}, also cf. \cite[\S 7.2]{KScoha}).   Let $Y_0\coloneqq f^{-1}(0)$, and let $Y_{\leq 0}\coloneqq f^{-1}(\bb{R}_{\leq 0})$.  For a sheaf $\s{F}$ on $Y$ and $U$ an analytic open subset of $Y$, define
\begin{align*}
    \Gamma_{X_{\leq 0}}\s{F}(U)\coloneqq \ker \left(\s{F}(U)\rar \s{F}(U\setminus (U\cap X_{\leq 0})\right).
\end{align*}
Then $\varphi_f:=(R\Gamma_{X\leq 0} \s{F})[1]|_{X_0}$.

The stacks $\s{M}_d^{\circ}$ for $d\in N^{\oplus}$ are not quite smooth complex varieties, but each is a quotient of a smooth complex variety by the action of an algebraic group, cf. \eqref{M-circ-stack}.  One can thus extend the definition of $\varphi_f$ to regular functions $f$ on $\s{M}_d^{\circ}$ using an equivariant version of the vanishing cycle construction as in \cite[\S 2.2]{DM}.

Let $\u{\Q}_d$ denote the constant sheaf on $\s{M}_d^{\circ}$.  For each $u\in \bb{C}^*$, we can define $\varphi_{\Tr(W)/u}\u{\Q}_d$.  Now consider $[X\rar \s{M}_d]\in H_{\reg}(Q,W)$. Composing with the inclusion $\s{M}_d\subset \s{M}_d^{\circ}$, we can consider the pullback
\begin{align*}
    \varphi^X_{\Tr(W)/u} \u{\Q}_d\coloneqq  (X\rar \s{M}_d^{\circ})^*\varphi_{\Tr(W)/u}\u{\Q}_d.
\end{align*}
This sheaf $\varphi^X_{\Tr(W)/u} \u{\Q}_d$ on $X$ in fact has the structure of a mixed Hodge module on $X$, and so the compactly supported cohomology $H^*_c(X,\varphi^X_{\Tr(W)/u} \u{\Q}_d)$ has a cohomologically graded rational mixed Hodge structure.  Recall here that a rational mixed Hodge structure is a finite-dimensional vector space $V$ over $\bb{Q}$, plus the data of an ascending filtration $W_*$ of $V$ (the weight filtration) and a descending filtration $F^*$ of $V\otimes_{\bb{Q}} \bb{C}$ (the Hodge filtration) such that the filtration induced by $F^*$ on
\begin{align*}
    \Gr^W_n(V):=W_n \otimes_{\bb{Q}} \bb{C} / W_{n-1} \otimes_{\bb{Q}} \bb{C}
\end{align*}
determines a pure Hodge structure of weight $n$.  By a cohomologically graded rational mixed Hodge structure on $H^*_c(X,\varphi^X_{\Tr(W)/u} \u{\Q}_d)$, we mean a rational mixed Hodge structure on $H^i_c(X,\varphi^X_{\Tr(W)/u} \u{\Q}_d)$ for each $i\in \bb{Z}$.

Let us abbreviate $H^*_c(X,\varphi^X_{\Tr(W)/u} \u{\Q}_d)$ as simply $H^*_d$. Up to isomorphism, the sheaf $\varphi^X_{\Tr(W)/u}\u{\Q}_d$ is independent of $u$.  However, there may be non-trivial monodromy $\mu$ on $H^*_d$ as $u$ travels around the origin in $\bb{C}$.  This $\mu$ is quasi-unipotent, i.e., the eigenvalues are roots of unity.  Let $\Gr^W_n(H^i_d)_1$ denote the generalized eigenspace for the possible eigenvalue $1$ of $\mu$, and let $\Gr^W_n(H^i_d)_{\neq 1}$ denote the direct sum of the generalized eigenspaces for all eigenvalues of $\mu$ other than $1$.  Finally, the quantum integration map $\s{I}_t$ is defined by taking the Serre polynomial (cf. \cite[\S 3.1.3]{DL}, also \cite[p. 69]{KSmotivic}) defined as follows:
\begin{align}\label{It}
\s{I}_t([X\rar \s{M}_d])\coloneqq t^{\chi(d,d)}z^d\sum_{i,n \in \bb{Z}} (-1)^{i} &\left(\dim(\Gr^W_n(H^i_d)_1)(-t)^{n} + \dim(\Gr^W_n(H^i_d)_{\neq 1})(-t)^{n+1}\right),
\end{align}
where $\chi$ is defined as in \eqref{Euler}.

The map of \eqref{It} above is essentially the same as that of \cite[(18)]{BenPos}, although the two look somewhat different.  The $t^{\chi(d,d)}$-factor in our \eqref{It} is simply to account for the twisting of the monoidal structure in \cite[(16)]{BenPos}.  The extra factor of $(-t)$ on the $\dim(\Gr^W_n(H^i_d)_{\neq 1})(-t)^{n+1}$-term in our \eqref{It} is needed because \cite{BenPos} actually works with the category of monodromic mixed Hodge modules, a difference which results in a shift for part of the weight filtration.  See \cite[Prop. 2.5]{DavJacobi} for details on the relationship between these two perspectives.  \cite[Prop. 3.13]{BenPos} thus yields the following:



\begin{prop}[\cite{BenPos}, Prop. 3.13]
$\s{I}_t:H_{\reg}(Q,W)\rar \bb{C}_t[N^{\oplus}]$ is a homomorphism of $\C_{\reg}(t)$-algebras.
\end{prop}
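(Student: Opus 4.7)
The plan is to verify the two axioms of an algebra homomorphism: preservation of the unit, and multiplicativity on classes of the form $a = [X_1 \xrightarrow{f_1} \s{M}_{d_1}]$ and $b = [X_2 \xrightarrow{f_2} \s{M}_{d_2}]$, since these generate $H_{\reg}(Q,W)$ as a $\C_{\reg}(t)$-module. The unit axiom is essentially trivial: $\s{M}_0$ is a point with $\chi(0,0) = 0$, the sheaf $\varphi_{\Tr(W)}\u{\Q}$ there is $\bb{Q}$ concentrated in degree zero, and \eqref{It} gives $\s{I}_t(1) = z^0 = 1$.

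For multiplicativity, the geometric input is that $\s{M}^{(2)}_{d_1,d_2}$ fibers over $\s{M}_{d_1} \times \s{M}_{d_2}$ with affine-space fibers parametrizing extension data. Concretely, $\s{M}^{(2),\circ}_{d_1,d_2}$ can be realized as a quotient stack $[\wt{\s{M}}^{(2),\circ}_{d_1,d_2} / P_{d_1,d_2}]$, where $P_{d_1,d_2} \subset \GL_{d_1+d_2}$ is the block-upper-triangular parabolic and $\wt{\s{M}}^{(2),\circ}_{d_1,d_2}$ is an affine bundle over $\wt{\s{M}}^{\circ}_{d_1} \times \wt{\s{M}}^{\circ}_{d_2}$ with fibers $\prod_{a \in Q_1} \Hom(\C^{d_{2,t(a)}}, \C^{d_{1,h(a)}})$. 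The function $\Tr(W)$ on $\s{M}^{(2),\circ}_{d_1,d_2}$ decomposes as $\Tr(W)_{d_1} + \Tr(W)_{d_2} + \Phi$, where $\Phi$ is regular and linear along the extension fibers.

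First I would apply the equivariant Thom-Sebastiani theorem for vanishing cycles (cf.\ \cite[\S 2.2]{DM}) together with dimensional reduction on the linear term $\Phi$ to obtain, up to Tate twist and cohomological shift, an isomorphism identifying $\varphi_{\Tr(W)}\u{\Q}$ on the extension stack with the external tensor product $\varphi_{\Tr(W)_{d_1}}\u{\Q} \boxtimes \varphi_{\Tr(W)_{d_2}}\u{\Q}$ pulled back along $(a_1, a_2)$. Second, base change along the Cartesian square defining the Hall product reduces the computation of $\s{I}_t(a * b)$ to a K\"unneth-type identity for Serre polynomials on an $(X_1 \times X_2)$-bundle, and pushforward along the affine-bundle direction contributes a pure Tate twist. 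Finally, tracking the dimension of the affine-space fibers together with the shift between $t^{\chi(d_1+d_2,\,d_1+d_2)}$ and $t^{\chi(d_1,d_1)+\chi(d_2,d_2)}$ produces the exponent $\chi(d_1,d_2) - \chi(d_2,d_1) = B(d_1,d_2)$, matching the quantum torus multiplication $z^{d_1} z^{d_2} = t^{B(d_1,d_2)} z^{d_1+d_2}$.

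The main obstacle will be the monodromy bookkeeping. Thom-Sebastiani in the monodromic mixed Hodge module formalism of \cite{KScoha} twists the tensor product on vanishing cycle cohomology, and one must verify that the $(-t)^n$ versus $(-t)^{n+1}$ weighting in \eqref{It} is the unique splitting ensuring that $\s{I}_t$ lands in the ordinary (non-monodromic) quantum torus algebra and is multiplicative. Establishing this compatibility, together with the comparison of \cite[Prop. 2.5]{DavJacobi} between monodromic and ordinary mixed Hodge structures on vanishing cycle cohomology, is what ultimately upgrades the K\"unneth-plus-Thom-Sebastiani identity to an equality of Serre polynomials in the sense of \eqref{It}.
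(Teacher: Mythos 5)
The paper does not give a proof of this proposition at all: it is imported verbatim from \cite[Prop.~3.13]{BenPos}, and the paragraph preceding the statement is devoted solely to reconciling the normalization in \eqref{It} with that of \cite[(18)]{BenPos} (the $t^{\chi(d,d)}$-factor matching the twisted monoidal structure, the extra $(-t)$ accounting for the monodromic weight shift via \cite[Prop.~2.5]{DavJacobi}). So your proposal is a from-scratch sketch of a cited result, not an alternative route to an argument the paper carries out. The skeleton you outline (base change on the Cartesian square defining $m$, Thom--Sebastiani, a twist computation producing the exponent $B(d_1,d_2)$, and the monodromic-versus-ordinary bookkeeping) is indeed the expected shape of the argument in the source.

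One step as you have written it would not go through. You claim $\Tr(W)$ on $\s{M}^{(2),\circ}_{d_1,d_2}$ decomposes as $\Tr(W)_{d_1}+\Tr(W)_{d_2}+\Phi$ with $\Phi$ a nonzero regular function linear along the extension fibers, to which you propose applying dimensional reduction. In fact $\Phi\equiv 0$: the trace of any cycle $b_1\cdots b_k$ acting on a block upper-triangular representation depends only on the diagonal blocks, so $\Tr(W)$ restricted to $\s{M}^{(2),\circ}_{d_1,d_2}$ is literally the pullback of the external sum $\Tr(W)_{d_1}\boxplus\Tr(W)_{d_2}$ along $(a_1,a_2)$. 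There is no cross term, and dimensional reduction is neither available nor needed. What one needs on the extension direction is instead the Tate-twist-by-relative-dimension compatibility of (equivariant) vanishing cycles under the affine fibration $\wt{\s{M}}^{(2),\circ}_{d_1,d_2}\to\wt{\s{M}}^{\circ}_{d_1}\times\wt{\s{M}}^{\circ}_{d_2}$ together with the parabolic induction from $P_{d_1,d_2}$ up to $\GL_{d_1+d_2}$; the net relative (stacky) dimension is $\sum_{a\in Q_1}d_{2,t(a)}d_{1,h(a)}-\sum_i d_{1,i}d_{2,i}=-\chi(d_2,d_1)$, and combining $t^{-2\chi(d_2,d_1)}$ with the normalization $t^{\chi(d_1+d_2,d_1+d_2)-\chi(d_1,d_1)-\chi(d_2,d_2)}=t^{\chi(d_1,d_2)+\chi(d_2,d_1)}$ indeed yields $t^{B(d_1,d_2)}$ as you want. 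Thom--Sebastiani is still the key input, but only for the external sum on $\s{M}^{\circ}_{d_1}\times\s{M}^{\circ}_{d_2}$, not for the extension fibers.
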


This construction simplifies quite a bit for $[f:X\rar \s{M}_d]$ with $\Tr(W)|_{\s{M}_d^{\circ}}=0$ and $X$ a smooth projective variety.  In this case, $\varphi_{\Tr(W)}=\id$, and so $\Gr^W_n(H^i_d)$ equals $H_c^{n}(X,\u\Q)$ if $n=i$ and vanishes otherwise.  Recalling the definition of $\Upsilon$ from \eqref{Ups}, we thus recover the following:
\begin{prop}\label{I-Ups}
If $[f:X\rar \s{M}_d\subset \s{M}]\in H_{\reg}(Q,W)$ and $\Tr(W)|_{\s{M}_d^{\circ}}=0$, then
\begin{align}\label{ItUps}
    \s{I}_t([f:X\rar \s{M}])=\Upsilon(X)t^{\chi(d,d)}z^d.
\end{align}
\end{prop}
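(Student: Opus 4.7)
The plan is to unpack the definition \eqref{It} of $\s{I}_t$ and observe that, under the stated hypotheses, each ingredient collapses to the corresponding contribution to the Poincar\'e polynomial.

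First, I would show that when $\Tr(W)|_{\s{M}_d^{\circ}}=0$, the sheaf $\varphi_{\Tr(W)/u}\u\Q_d$ reduces to (a shift of) the constant sheaf on $\s{M}_d^{\circ}$, independently of $u$. This follows directly from the definition recalled just before the proposition: for $f=0$ on a smooth variety $Y$ one has $Y_0=Y=Y_{\leq 0}$, so $R\Gamma_{Y_{\leq 0}}\u\Q = \u\Q$ and $\varphi_f\u\Q$ is a shift of $\u\Q$. The equivariant construction on the stack quotient \eqref{M-circ-stack} inherits this property, and pulling back along $f:X\to \s{M}_d$ therefore gives $\varphi^X_{\Tr(W)/u}\u\Q_d \cong \u\Q$ on $X$ with the normalizations implicit in \eqref{It}. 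Because this identification is independent of $u$, the monodromy $\mu$ acts as the identity on $H^*_d$, so $\Gr^W_n(H^i_d)_{\neq 1}=0$ and $\Gr^W_n(H^i_d)_1=\Gr^W_n(H^i_d)$ for all $i,n$.

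Second, I would appeal to classical Hodge theory: since $X$ is smooth and projective, $H^i_d = H^i_c(X,\u\Q)=H^i(X,\u\Q)$ carries a pure Hodge structure of weight $i$, so $\Gr^W_n(H^i_d) \cong H^i(X,\bb{C})$ when $n=i$ and vanishes otherwise.

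Third, I would substitute these facts into \eqref{It}. Only the $(-t)^n$-terms with $n=i$ survive, giving
\begin{align*}
\s{I}_t([f:X\to\s{M}]) = t^{\chi(d,d)}z^d\sum_{i\in\bb{Z}}(-1)^i\dim_{\bb{C}}H^i(X,\bb{C})(-t)^i,
\end{align*}
which by the definition of $\Upsilon$ in \eqref{Ups} equals $\Upsilon(X)\,t^{\chi(d,d)}z^d$, as claimed.

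The main obstacle I anticipate is the first step, which requires reconciling the conventions (shifts, Tate twists, monodromic versus ordinary mixed Hodge modules) of \cite{BenPos,KScoha} with the precise normalization built into \eqref{It}, so that the vanishing cycle functor applied to the constant sheaf on a locus where $\Tr(W)=0$ really does yield the standard mixed Hodge structure on $H^*(X,\u\Q)$. Once the identification $\varphi^X_{\Tr(W)/u}\u\Q_d\cong \u\Q_X$ is in hand, the remainder of the argument is a formal consequence of purity for smooth projective varieties and the explicit form of \eqref{It}.
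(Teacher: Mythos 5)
Your proof is correct and follows the paper's own route, which appears in the short remark immediately preceding the proposition: when $\Tr(W)$ vanishes identically, $\varphi_{\Tr(W)} = \id$ (so $\Gr^W_n(H^i_d) = H^i_c(X,\u\Q)$ for $n = i$ and zero otherwise), and one then reads off $\Upsilon(X)$ from \eqref{It}. You spell out two steps the paper leaves implicit — that vanishing of $\Tr(W)$ on $\s{M}_d^\circ$ forces the monodromy $\mu$ to be trivial, killing the $\Gr^W_n(H^i_d)_{\neq 1}$ terms, and that purity of the Hodge structure on $H^i(X)$ for smooth projective $X$ concentrates the weight filtration at $n = i$ — both of which are exactly the reasoning the paper is compressing. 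You also correctly flag the sensitivity to shift/Tate-twist conventions as the only real subtlety; the paper itself glosses over this in the same way by simply declaring $\varphi_{\Tr(W)} = \id$.
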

In particular, if $W=0$ (e.g., for $Q$ acyclic), $\s{I}_t$ equals the quantum integration map of \cite[\S 6]{joyce2007configurations}.

\begin{eg}\label{IkappaEx}
Recall $\kappa_{ki}\coloneqq [\Spec \bb{C} \rar p_{S_i^{\oplus k}}=\s{M}_{ke_i}]$.  We have $W|_{\s{M}^{\circ}_{ke_i}}=0$, $\Upsilon(\kappa_{ki})=1$ (the Poincar\'e polynomial of a point), and $\chi(ke_i,ke_i)=k^2$.  Hence, $\s{I}_t(\kappa_{ki})=t^{k^2}z^{ke_i}$.  In particular,
\begin{align}\label{Ikappa}
    \s{I}_t(\kappa_i)=tz^{e_i}.
\end{align}
As a check, one can use \eqref{kappaik} to confirm that $\s{I}_t(\kappa_i^k)=\s{I}_t(\kappa_i)^k$.
\end{eg}

Composing $\s{I}_t$ with $\pi_{t\mapsto 1}$ induces the classical integration map:
\begin{align*}
    \s{I}\coloneqq \pi_{t\mapsto 1} \circ \s{I}_t : H_{\reg}(Q,W) \rar \bb{C}[N^{\oplus}].
\end{align*}
The classical integration maps of \cite[\S 7]{JS} and \cite[Thm. 11.1]{Bridge} are always (even for nonzero $W$) given by the $t\mapsto 1$ limit of \eqref{ItUps}, i.e., by taking Euler characteristics.  Note that \eqref{Ikappa} is sufficient to completely determine the restrictions of $\s{I}_t$ and $\s{I}$ to the composition algebra $\s{C}(Q,W)$ in which all our computations will lie.  Since $\s{I}$ agrees with the classical integration maps of \cite[\S 7]{JS} and \cite[Thm. 11.1]{Bridge} on the generators $\kappa_i$, the maps necessarily agree on all of $\s{C}(Q,W)$.

\section{Scattering diagrams from Hall algebras}\label{ScatterSection}

\subsection{Background on scattering diagrams}\label{Scattering}

Here we review the basic definitions and properties of scattering diagrams from the perspective useful for understanding the Hall algebra scattering diagrams of \cite{Bridge}. 

Let $\Lambda$ denote a finite-rank lattice equipped with a $\bb{Z}$-valued skew-symmetric form $\{\cdot,\cdot\}$.  Let $\Lambda^{\vee}\coloneqq \Hom(\Lambda,\bb{Z})$ be the dual lattice, and let $\langle\cdot,\cdot\rangle:\Lambda\oplus \Lambda^{\vee}\rar \bb{Z}$ denote the dual pairing.  We have a map
\begin{align}\label{pistar}
p^*: \Lambda&\rar \Lambda^{\vee} \nonumber\\
n&\mapsto \{\cdot,n\}.
\end{align}
Fix a strictly convex rational polyhedral cone $\sigma_{\Lambda^{\oplus}}\subset \Lambda_{\bb{R}}$.  Let $\Lambda^{\oplus}\coloneqq \sigma_{\Lambda^{\oplus}}\cap \Lambda$, and let $\Lambda^+\coloneqq \Lambda^{\oplus}\setminus \{0\}$.

Let $\f{g}\coloneqq \bigoplus_{n\in \Lambda^+} \f{g}_n$ be a Lie algebra graded by $\Lambda^+$, meaning that $[\f{g}_{n_1},\f{g}_{n_2}] \subseteq \f{g}_{n_1+n_2}$.  We say that $\f{g}$ is \textbf{skew-symmetric} with respect to $\{\cdot,\cdot\}$ if
\begin{align}\label{skewCondition}
[\f{g}_{n_1},\f{g}_{n_2}]=0 \mbox{~whenever~}\{n_1,n_2\}=0.  
\end{align}

For each $k\in \bb{Z}_{\geq 1}$, let $$k\Lambda^+\coloneqq \{n_1+\ldots+n_k\in \Lambda^+|n_i\in \Lambda^+ \mbox{ for each } i=1,\ldots,k\}.$$  Let $\f{g}^{\geq k}\coloneqq \bigoplus_{n\in k\Lambda^+} \f{g}_n$.  Note that $\f{g}^{\geq k}$ is a Lie subalgebra of $\f{g}$.  Let $\f{g}_k$ denote the nilpotent Lie algebra $\f{g}/\f{g}^{\geq k}$, and let $\wh{\f{g}}\coloneqq \varprojlim \f{g}_k$.  We have corresponding Lie groups $G\coloneqq \exp \f{g}$, $G_k\coloneqq \exp \f{g}_k$, and $\wh{G}\coloneqq \exp \wh{\f{g}} = \varprojlim G_k$.

For each $n \in \Lambda^+$, we have a Lie subalgebra $\f{g}_{n}^{\parallel}\coloneqq  \prod_{k \in \bb{Z}_{\geq 1}} \f{g}_{kn} \subset \wh{\f{g}}$.   We say that $\f{g}$ has \textbf{Abelian walls} if each $\f{g}_{n}^{\parallel}$ is Abelian.  In particular, $\f{g}$ has Abelian walls whenever $\f{g}$ is skew-symmetric.  Let $G_{n}^{\parallel}\coloneqq \exp(\f{g}_n^{\parallel}) \subset \wh{G}$.

The Abelian walls condition is usually assumed to hold when working with scattering diagrams, but when defining Hall algebra scattering diagrams, one needs a slight generalization as in \cite[\S 2]{Bridge}.

\begin{dfn}\label{WallDfn}
A \textbf{wall} in $\Lambda^{\vee}_{\bb{R}}$ over $\wh{\f{g}}$ is data of the form $(\f{d},g_{\f{d}})$, where:
\begin{itemize}
\item $g_{\f{d}}\in \f{g}_{n_{\f{d}}}^{\parallel}$ for some primitive $n_{\f{d}}\in \Lambda^+$.  The element $-p^*(n_{\f{d}})$ is called the \textbf{direction} of the wall.  We call $g_{\f{d}}$ the \textbf{scattering function} associated to the wall.
\item $\f{d}$ is a closed, convex (but not necessarily strictly convex), rational-polyhedral, codimension-one affine cone in $\Lambda^{\vee}_{\bb{R}}$, parallel to $n_{\f{d}}^{\perp}$.  We call $\f{d}$ the \textbf{support} of the wall.
\end{itemize}

A \textbf{scattering diagram} $\f{D}$ over $\wh{\f{g}}$ is a set of walls in $\Lambda^{\vee}_{\bb{R}}$ over $\wh{\f{g}}$ such that for each $k >0$, there are only finitely many $(\f{d},g_{\f{d}})\in \f{D}$ with $g_{\f{d}}$ not projecting to $0$ in $\f{g}_k$.  If $(\f{d}_1,g_{\f{d}_1})$ and $(\f{d}_2,\f{g}_{\f{d}_2})$ are two walls of $\f{D}$, and if $\codim_{\Lambda^{\vee}_{\bb{R}}}(\f{d}_1\cap \f{d}_2)=1$, then we require that $[g_{\f{d}_1},g_{\f{d}_2}]=0$ (note that this is automatic for Abelian walls).

A wall with direction $-v$ is called \textbf{incoming} if it contains $v$.  Otherwise, the wall is called \textbf{outgoing}.
\end{dfn}

We will sometimes denote a wall $(\f{d},g_{\f{d}})$ by just $\f{d}$.  Denote $\Supp(\f{D})\coloneqq  \bigcup_{\f{d}\in \f{D}} \f{d}$, and \begin{align*}
\Joints(\f{D})\coloneqq  \bigcup_{\f{d}\in \f{D}} \partial \f{d} \cup \bigcup_{\substack{\f{d}_1,\f{d}_2\in \f{D}\\
		               \dim (\f{d}_1\cap \f{d}_2) = \rank(\Lambda)-2}} \f{d}_1\cap \f{d}_2. 
\end{align*}

Note that for each $k>0$, a scattering diagram $\f{D}$ over $\wh{\f{g}}$ induces a finite scattering diagram $\f{D}^k$ over $\f{g}_k$ with walls corresponding to the $\f{d}\in \f{D}$ for which the projection of $g_{\f{d}}$ to $\f{g}_k$ is nonzero.

Consider a smooth immersion $\gamma:[0,1]\rar \Lambda^{\vee}_{\bb{R}}\setminus \Joints(\f{D})$ with endpoints not in $\Supp(\f{D})$ which is transverse to each wall of $\f{D}$ it crosses.  Let $(\f{d}_i,g_{\f{d}_i})$, $i=1,\ldots, s$, denote the walls of $\f{D}^{k}$ crossed by $\gamma$, and say they are crossed at times $0<t_1\leq \ldots \leq t_s<1$, respectively.\footnote{If $t_i=t_{i+1}$, then the corresponding elements $g_{\f{d}_i},g_{\f{d}_{i+1}}\in \f{g}_n^{\parallel}$ must commute, and so the ordering of the corresponding walls does not affect $\Phi_{\gamma,\f{D}}^k$.}  Define 
\begin{align}\label{WallCross}
\Phi_{\f{d}_i}\coloneqq \exp(g_{\f{d}_i})^{\sign \langle n_{\f{d}_i},-\gamma'(t_i)\rangle} \in G_k.
\end{align}
Let $\Phi_{\gamma,\f{D}}^k\coloneqq \Phi_{\f{d}_s} \cdots \Phi_{\f{d}_1}\in G_k$, and define the \textbf{path-ordered product}:
\begin{align*}
\Phi_{\gamma,\f{D}}\coloneqq  \varprojlim_k \Phi_{\gamma,\f{D}}^k \in \wh{G}.
\end{align*}

\begin{dfn}
Two scattering diagrams $\f{D}$ and $\f{D}'$ are \textbf{equivalent} if $\Phi_{\gamma,\f{D}} = \Phi_{\gamma,\f{D}'}$ for each smooth immersion $\gamma$ as above.  $\f{D}$ is \textbf{consistent} if each $\Phi_{\gamma,\f{D}}$ depends only on the endpoints of $\gamma$.
\end{dfn}

We say that $x\in \Lambda_{\bb{R}}^{\vee}$ is \textbf{general} if it is contained in at most one hyperplane of the form $n^{\perp}$ for $n\in \Lambda$.  For $\f{D}$ a scattering diagram over $\wh{\f{g}}$ and $x\in \Lambda_{\bb{R}}^{\vee}$ general, denote $$g_{x,\f{D}}\coloneqq \sum_{\f{d}\ni x} g_{\f{d}}\in \wh{\f{g}},$$ where the sum is over all walls $(\f{d},g_{\f{d}})\in \f{D}$ with $\f{d}\ni x$.  One easily sees the following standard fact  (cf. \cite[Lem. 1.9]{GHKK}):
\begin{lem}\label{gxEquiv}
Two scattering diagrams $\f{D}$ and $\f{D}'$ over $\wh{\f{g}}$ are equivalent if and only if $g_{x,\f{D}}=g_{x,\f{D}'}$ for all general $x\in \Lambda_{\bb{R}}^{\vee}$.
\end{lem}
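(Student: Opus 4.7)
The plan is to work at each truncation $\f{g}_k$ separately, where only finitely many walls of each diagram have nonzero projection and where $\exp\colon \f{g}_k \to G_k$ is a bijection, and then to assemble the results by passing to the inverse limit.

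The cornerstone observation is that if $x\in \Lambda^\vee_\R$ is general, every wall $\f{d}$ of $\f{D}$ (or $\f{D}'$) with $x\in \f{d}$ is parallel to the unique hyperplane $x+n^\perp$ through $x$, so $n_{\f{d}}=n$ and $g_{\f{d}}\in \f{g}_n^\parallel$. Any two such walls meet in a codimension-one cone, so Definition \ref{WallDfn} forces their scattering functions to commute. Hence, for a short smooth transverse $\gamma$ passing through $x$ with $\epsilon=\sign\langle n,-\gamma'\rangle$, every wall through $x$ is crossed with the same sign, and commutativity collapses the path-ordered product:
\[
\Phi_{\gamma,\f{D}^k} = \prod_{\f{d}\ni x}\exp(g_{\f{d}})^{\epsilon} = \exp\!\bigl(\epsilon\,g_{x,\f{D}^k}\bigr),
\]
and likewise for $\f{D}'$. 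For the $(\Rightarrow)$ direction, I would choose such a $\gamma$ small enough to cross only those walls of $\f{D}^k$ and $(\f{D}')^k$ that pass through $x$, which is possible since at level $k$ only finitely many walls contribute. Equivalence gives $\Phi_{\gamma,\f{D}^k}=\Phi_{\gamma,(\f{D}')^k}$, and injectivity of $\exp$ on the nilpotent $\f{g}_k$ yields $g_{x,\f{D}^k}=g_{x,(\f{D}')^k}$; letting $k\to \infty$ gives $g_{x,\f{D}}=g_{x,\f{D}'}$.

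For the $(\Leftarrow)$ direction, I take any admissible $\gamma$ (transverse, endpoints outside $\Supp(\f{D})\cup\Supp(\f{D}')$, avoiding $\Joints(\f{D})\cup\Joints(\f{D}')$) and observe that any walls of $\f{D}^k\cup (\f{D}')^k$ crossed at a common time $t_i$ must be parallel: otherwise they would meet in codimension two at $\gamma(t_i)$, putting $\gamma(t_i)$ into $\Joints(\f{D}\cup \f{D}')$ and contradicting admissibility. Hence each crossing point $x_i\coloneqq \gamma(t_i)$ is general, and the contribution to $\Phi_{\gamma,\f{D}^k}$ at time $t_i$ collapses as before to $\exp(\epsilon_i\, g_{x_i,\f{D}^k})$, with the analogous statement for $\f{D}'$. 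The hypothesis identifies the two products factor by factor, and $\Phi_{\gamma,\f{D}}=\varprojlim_k \Phi_{\gamma,\f{D}^k}$ then lifts equality to $\wh G$.

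The main subtlety, specific to Bridgeland's non-Abelian-walls convention in Definition \ref{WallDfn}, is that commutativity of parallel walls through a common point is not automatic from $\f{g}_n^\parallel$ being Abelian (which can fail) but must be extracted from the codimension-one intersection axiom; this is the one place where the argument genuinely differs from the Abelian-walls version in \cite[Lem.~1.9]{GHKK}. A secondary bookkeeping point is to ensure the finite-level equalities pass consistently to the pro-nilpotent limit, but this is routine given that both $g_{x,\f{D}}$ and $\Phi_{\gamma,\f{D}}$ are defined as inverse limits of their level-$k$ truncations.
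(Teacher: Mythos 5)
The paper gives no proof of this lemma — it is dismissed as a "standard fact (cf. [GHKK, Lem.~1.9])" — so there is nothing to compare against directly; your overall strategy (truncate to $\f{g}_k$, collapse each crossing into a single $\exp$, use injectivity of $\exp$ on the nilpotent quotient, then pass to the limit) is the standard one, and your observation that commutativity of parallel walls through a common point must be extracted from the codimension-one intersection axiom in Definition~\ref{WallDfn} rather than from Abelian walls is exactly the right point to emphasize in Bridgeland's setting.

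There is, however, a genuine gap in your $(\Leftarrow)$ direction, at the sentence ``Hence each crossing point $x_i\coloneqq \gamma(t_i)$ is general.'' This does not follow. A point $\gamma(t_i)$ that avoids $\Joints(\f{D})\cup\Joints(\f{D}')$ and at which all the walls being crossed are parallel (direction $n$) can still lie in a second hyperplane $(n')^\perp$ with $n'$ not proportional to $n$ — there simply need not be any wall of either diagram supported there — and then $\gamma(t_i)$ is \emph{not} general in the sense of the paper's definition, so the hypothesis $g_{x,\f{D}}=g_{x,\f{D}'}$ does not apply to $x=\gamma(t_i)$. You need an intermediate step: since $\gamma(t_i)$ is in the relative interior of every wall it lies on (it avoids the joints) and at level $k$ only finitely many walls contribute, one can choose a genuinely general point $x'$ nearby \emph{inside the hyperplane through} $\gamma(t_i)$ such that the sets of walls of $\f{D}^k$ and of $(\f{D}')^k$ through $x'$ coincide with those through $\gamma(t_i)$; then $g_{\gamma(t_i),\f{D}^k}=g_{x',\f{D}^k}=g_{x',(\f{D}')^k}=g_{\gamma(t_i),(\f{D}')^k}$. (Alternatively, perturb $\gamma$ slightly to force each $\gamma(t_i)$ off the countable union of pairwise-intersection loci $(n^\perp)\cap(n')^\perp$; a small perturbation does not change the sequence of walls crossed because $\gamma$ stays away from the joints, so the path-ordered products are unchanged.) Relatedly, your claim that walls of $\f{D}^k\cup(\f{D}')^k$ crossed at a common time must be parallel implicitly requires $\gamma$ to avoid $\Joints(\f{D}\cup\f{D}')$, which is strictly larger than what the definition of an admissible path demands; the same perturbation remark patches this. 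Finally, note that in the $(\Rightarrow)$ direction you tacitly need that a general $x$ is \emph{not} a joint of either diagram (so that a short transverse $\gamma$ through $x$ is admissible); this is true when all walls are cones with apex at the origin — a codimension-two face of a rational cone in $n^\perp$ is then contained in $n^\perp\cap(n')^\perp$ for some $n'$ — but it deserves a sentence, and is exactly where the statement would need to be reinterpreted for translated walls such as those appearing in the perturbed diagrams $\f{D}_k^0$ of \S\ref{TropScatProof}.
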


\begin{eg}\label{eq}
~

\begin{enumerate}
    \item For $\f{D}$ a scattering diagram, consider a set of walls $\{(\f{d},g_i)\in \f{g}_{n_{\f{d}}}^{\parallel})\in \f{D}| i\in S\}$, where $S$ is some countable index set and $n_{\f{d}}$ and $\f{d}$ are independent of $i$.  Then replacing this set of walls with a single wall $(\f{d},\sum_{i\in S} g_{i})$ produces an equivalent scattering diagram.
    \item Replacing a wall $(\f{d},g_{\f{d}})\in \f{D}$ with a pair of walls $(\f{d}_i,g_{\f{d}})$, $i=1,2$, such that $\f{d}_1\cup \f{d}_2=\f{d}$ and $\codim_{\Lambda_{\bb{R}}^{\vee}}(\f{d}_1\cap \f{d}_2)=2$ produces an equivalent scattering diagram.
\end{enumerate}
\end{eg}

The following theorem is fundamental to the study of scattering diagrams.  The $2$-dimensional version was first proved in \cite{KS}, and this was generalized to higher dimensions in \cite[\S 3]{GS11} for scattering diagrams over the module of log derivations. The higher-dimensional version for scattering diagrams over skew-symmetric Lie algebras follows from \cite[Prop. 3.2.6, 3.3.2]{WCS} (cf. \cite[Thm. 1.21]{GHKK} for a review of this argument from our viewpoint).  As pointed out to us by Lang Mou, this result had not previously been proven in the presence of non-Abelian walls.

\begin{thm}\label{KSGS}
Let $\f{g}$ be a $\Lambda^+$-graded Lie algebra, and let $\f{D}_{\In}$ be a finite scattering diagram over $\wh{\f{g}}$ whose walls are of the form $(n_i^{\perp},g_i)$ for various primitive $n_i\in N^+$.  If $\f{g}$ has Abelian walls, then there is a unique-up-to-equivalence scattering diagram $\f{D}$ such that $\f{D}$ is consistent, $\f{D} \supset \f{D}_{\In}$, and $\f{D}\setminus \f{D}_{\In}$ consists only of outgoing walls.  Even if $\f{g}$ does not have Abelian walls, if there exists a consistent scattering diagram $\f{D}\supset \f{D}_{\In}$ such that $\f{D}\setminus \f{D}_{\In}$ consists only of outgoing walls as above, then this $\f{D}$ is the unique such scattering diagram, up to equivalence.
\end{thm}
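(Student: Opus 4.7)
The plan is to build $\f{D}$ order-by-order along the filtration $\f{g} \supset \f{g}^{\geq 2} \supset \cdots$, producing a compatible family of finite scattering diagrams $\f{D}^k$ over $\f{g}_k$ and taking the inverse limit.  For the inductive step, given $\f{D}^k$ consistent over $\f{g}_k$ with $\f{D}^k \setminus \f{D}_{\In}$ outgoing, lift its scattering functions arbitrarily to $\f{g}_{k+1}$ and measure the failure of consistency via path-ordered products around small loops encircling each joint $J \in \Joints(\f{D}^k)$.  By Lemma \ref{gxEquiv}, this error is concentrated at joints, and its class lies in $\f{g}^{\geq k}/\f{g}^{\geq k+1}$.

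The key step for existence is a local correction at each joint, in the spirit of the original Kontsevich-Soibelman lemma \cite{KS}.  After quotienting by directions tangent to $J$, the situation becomes effectively two-dimensional, and one shows the obstruction at $J$ decomposes as $\sum_n \xi_n$ with $\xi_n \in \f{g}_n^{\parallel}$ indexed by primitive $n\in \Lambda^+$ satisfying $n^{\perp}\supset \text{span}(J)$.  One then introduces new outgoing walls supported on $J + \R_{\geq 0}(-p^*(n))$ with scattering functions $\xi_n$.  The Abelian walls hypothesis ensures that elements within a single $\f{g}_n^{\parallel}$ commute, so this correction is well-defined up to equivalence and does not itself introduce fresh inconsistencies at order $k+1$.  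Assembling these corrections across all joints produces $\f{D}^{k+1}$; this follows the pattern of \cite[Prop. 3.2.6, 3.3.2]{WCS} extended across filtration levels.

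For uniqueness (both with and without the Abelian walls hypothesis), suppose $\f{D}$ and $\f{D}'$ each contain $\f{D}_{\In}$, are consistent, and have only outgoing walls outside $\f{D}_{\In}$.  By Lemma \ref{gxEquiv}, it suffices to verify $g_{x, \f{D}} \equiv g_{x, \f{D}'}$ modulo $\f{g}^{\geq k+1}$ for every general $x \in \Lambda^{\vee}_{\bb{R}}$, by induction on $k$.  Assuming this holds modulo $\f{g}^{\geq k}$, the outgoing-only constraint forces each order-$k$ wall of $\f{D} \setminus \f{D}_{\In}$ to emanate from a joint of the lower-order diagram in a direction $-p^*(n_{\f{d}})$; consistency at that joint uniquely pins down the corresponding scattering function in terms of lower-order data, which agree between $\f{D}$ and $\f{D}'$ by the inductive hypothesis.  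Crucially, this step never requires Abelian walls---the rigidity flows solely from the outgoing constraint combined with the assumption that both diagrams exist.  The main technical obstacle throughout is the joint-by-joint decomposition: one must verify that the path-ordered obstruction really lies in the expected direct sum $\bigoplus_n \f{g}_n^{\parallel}$, using the codimension-$1$ commutativity condition built into Definition \ref{WallDfn} to handle overlaps between walls of the same direction.
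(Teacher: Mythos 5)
Your existence sketch summarizes the known KS/WCS argument, but the paper's proof of Theorem~\ref{KSGS} does not re-prove existence---it cites \cite{KS}, \cite{GS11}, and \cite{WCS} for the Abelian-walls case and establishes only the genuinely new claim, namely uniqueness when $\f{g}$ lacks Abelian walls. There your argument diverges from the paper's and leaves a gap. The paper's uniqueness argument is \emph{global}: it sets $\f{D}''$ to be the ``difference diagram'' satisfying $g_{x,\f{D}''}=g_{x,\f{D}^{k+1}}-g_{x,(\f{D}')^{k+1}}$; the inductive hypothesis forces these differences into degree $k$, hence \emph{central} in $\f{g}_{k+1}$; centrality makes $\f{D}''$ itself a consistent scattering diagram, and a consistent scattering diagram with central scattering functions and only outgoing walls must be trivial, because any wall with a boundary facet would produce an inconsistency near that facet, forcing every wall (up to equivalence) to have full-hyperplane support, i.e.\ to be incoming, contradicting the outgoing hypothesis.

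Your local, joint-by-joint argument hinges on the claim that ``consistency at that joint uniquely pins down the corresponding scattering function,'' which you leave unjustified and which is not self-evident without Abelian walls. The crucial enabling fact is omitted: after the inductive step, the order-$k$ corrections are central in $\f{g}_{k+1}$, and this is precisely what turns the local consistency conditions at each joint into Abelian constraints with unique solutions. Your remark that ``the rigidity flows solely from the outgoing constraint combined with the assumption that both diagrams exist'' misses this centrality observation, which is the heart of the proof. Furthermore, ``emanates from a joint'' is imprecise once $\dim \Lambda^{\vee}_{\bb{R}} \geq 3$, where the boundary of an outgoing wall is a codimension-$2$ polyhedral complex that need not sit inside a single joint; handling this correctly requires care that the global difference-diagram argument sidesteps entirely. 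Your local route could likely be repaired, but only by inserting the centrality observation explicitly and arguing wall-by-wall about boundary facets, which amounts to reconstituting the paper's global argument.
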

We note that an earlier version of this paper claimed existence more generally, but we have since realized that proving the consistency of the scattering diagram $\f{D}_k^{\infty}$ in \S \ref{TropScatProof} requires the Abelian walls condition, and so our argument was flawed.  Fortunately, the existence of the Hall algebra scattering diagram is already given by \cite[Theorem 6.5]{Bridge}, restated below as Theorem \ref{thm:Hallscattering}.  

\begin{proof}
As noted above, the only new statement is the uniqueness statement in the case of non-Abelian walls.  We prove this using an argument inspired by that \cite[Lem. C.7]{GHKK}.  Let $\f{D},\f{D}'$ be two consistent scattering diagrams over $\wh{\f{g}}$ with incoming walls $\f{D}_{\In}$ as in the statement of the theorem.  We shall prove by induction on $k$ that $\f{D}^k$ and $(\f{D}')^k$ are equivalent over $\f{g}_k$ for each $k$, and then the equivalence of $\f{D}$ and $\f{D}'$ follows.  Note that $\f{D}^1$ and $(\f{D}')^1$ are both equivalent to the trivial scattering diagram, hence to each other.

Now suppose that $\f{D}^k$ and $(\f{D}')^k$ are equivalent over $\f{g}_k$.  Let $\f{D}''$ be a scattering diagram over $\f{g}_{k+1}$ such that $$g_{x,\f{D}''}=g_{x,\f{D}^{k+1}}-g_{x,(\f{D}')^{k+1}}$$ for each general $x\in \Lambda_{\bb{R}}^{\vee}$. 
Since $\f{D}^k$ and $(\f{D}')^k$ are equivalent over $\f{g}_k$, we must have $g_{x,\f{D}''}\in \f{g}^{\geq k}\setminus \f{g}^{\geq k-1}$, hence $g_{x,\f{D}''}$ is central in $\f{g}_{k+1}$.  Hence, $(\f{D}')^k \cup \f{D}''$ is a well-defined scattering diagram over $\f{g}_{k+1}$, and by Lemma \ref{gxEquiv} it is equivalent to $\f{D}^{k+1}$.  Our goal now is to show that $\f{D}''$ is equivalent to the trivial scattering diagram.

Since both $\f{D}$ and $\f{D}'$ were assumed to be consistent, and the scattering functions of $\f{D}''$ are all central in $\f{g}_{k+1}$, $\f{D}''$ must also be consistent (over $\f{g}_{k+1}$).  Furthermore, this consistency plus centrality of the scattering functions implies that, up to equivalence, the support of every wall of $\f{D}''$ is an entire affine hyperplane in $\Lambda_{\bb{R}}^{\vee}$.  But then all walls of $\f{D}''$ (up to equivalence) are incoming, and since the incoming walls of $\f{D}$ and $\f{D}'$ are the same, this implies that $\f{D}''$ is equivalent to the trivial scattering diagram over $\f{g}_{k+1}$, as desired.
\end{proof}

A scattering diagram playing the role of $\f{D}_{\In}$ in Theorem \ref{KSGS} will be referred to as an \textbf{initial scattering diagram}.  The consistent scattering diagram $\f{D}$ (up to equivalence) with incoming walls $\f{D}_{\In}$ as in the theorem will be denoted $\scat(\f{D}_{\In})$.

\begin{eg} \label{ex:firstdiag}
Consider $\Lambda = \Z^2$.  Equip $\Lambda$ with the skew-symmetric form $\{\cdot,\cdot\}$ represented by 
$\left( \begin{array}{c c}
0 & -1 \\ 1 & 0
\end{array}
\right)$, and  consider the quantum torus algebra $\bb{C}_t[\Lambda]$ as in \S \ref{qtor}.  Take $\f{g}$ to be the Lie subalgebra (with respect to Poisson bracket) with basis $\{z^n:n\in \Lambda^+\}$.  Let $$\f{D}_{\In}\coloneqq \{(e_1^{\perp},-\Li(-z^{e_1};t)),(e_2^{\perp},-\Li(-z^{e_2};t))\},$$
where $\Li(x,t)$ denotes the quantum dilogarithm as in \eqref{eq:li} below.  Then $\f{D}\coloneqq \scat(\f{D}_{\In})$ is obtained by adding a single outgoing wall $(\bb{R}_{\geq 0} (1,-1), -\Li(-z^{(1,1)};t))$, cf. Figure \ref{a2q}.

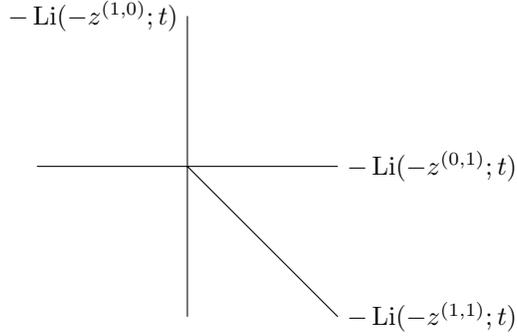
\begin{figure}[htb]
\centering
\begin{tikzpicture}
\draw
(-2,0) -- (2,0) node[right] {$-\Li(-z^{(0,1)}; t)$}
(0,-2) -- (0,2) node[left] {$-\Li(-z^{(1,0)}; t)$}
(0,0) -- (2,-2) node[right] {$-\Li(-z^{(1,1)}; t)$};
\end{tikzpicture}
\caption{The quantum $A_2$ scattering diagram.  \label{a2q}}
\end{figure}
 
The consistency of this scattering diagram is equivalent to a version of the quantum pentagon identity of \cite{FK}.  The classical limit is essentially the $\ell_1=\ell_2=1$ case of \cite[Ex. 1.6]{GPS} (with some small changes in sign conventions).  We will see in Example \ref{ex:a2diag} that this is the scattering diagram obtained when applying the quantum integration map to the Hall algebra scattering diagram associated to the $A_2$-quiver.
\end{eg}

\subsection{Hall algebra scattering diagrams}\label{HallScat}

\subsubsection{Setup for Hall algebra scattering diagrams and their variants}\label{HallScatSetup}

We now take $\Lambda=N$,  $\Lambda^{\oplus}=N^{\oplus}$, and $\{\cdot,\cdot\}=B$.  Recall that $H(Q,W)$ admits a grading by $N^{\oplus}$ as in \eqref{Hgrade}. In particular, we can write $H_{\reg}(Q,W)=H_{\reg}(Q,W)_0\oplus H_{\reg}(Q,W)_{>0}$ for $H_{\reg}(Q,W)_{>0}\coloneqq \bigoplus_{d\in N^+} H_{\reg}(Q,W)_d$.  Let $\f{g}^{\Hall}\coloneqq (t-t^{-1})^{-1}\cdot H_{\reg}(Q,W)_{>0}$, viewed as a Lie algebra using the commutator bracket as in Remark \ref{PvC}.

The Lie algebra $\f{g}^{\Hall}$ typically is not skew-symmetric and does not have Abelian walls.  To get around this issue, let $\f{i}^{\Skew}$ denote the Lie ideal of $\f{g}^{\Hall}$ generated by the commutators we wish to vanish, i.e.,
\begin{align*}
    \f{i}^{\Skew}\coloneqq \left\langle [\f{g}^{\Hall}_{d_1},\f{g}^{\Hall}_{d_2}] : d_1,d_2\in N^+, \{d_1,d_2\}=0\right\rangle.
\end{align*}
Here, for $S$ a subset of $\f{g}_{\reg}$, $\langle S \rangle$ denotes the Lie ideal generated by $S$, i.e., the intersection of all Lie ideals of $\f{g}^{\Hall}$ which contain $S$. 
Then for any Lie ideal $\f{i}$ which contains $\f{i}^{\Skew}$, we define
\begin{align*}
    \f{g}^{\f{i}}\coloneqq \f{g}^{\Hall}/\f{i}.
\end{align*}
Note that for any Lie algebra ideal $\f{i}$ of $\f{g}^{\Hall}$, $\f{g}^{\Hall}/\f{i}$ is skew-symmetric if and only if $\f{i}\supset \f{i}^{\Skew}$.  Since the commutator bracket on the quantum torus algebra makes it into a skew-symmetric Lie algebra, we in particular have
\begin{align*}
    \ker(\s{I}_t)\supset \f{i}^{\Skew}.
\end{align*}
The resulting Lie algebra $\f{g}^q\coloneqq \f{g}^{\ker(\s{I}_t)}$ is just the quantum torus algebra $(t-t^{-1})^{-1}\cdot \bb{C}_t[N^{\oplus}]$ with its commutator bracket as in \eqref{qPois}.  Similarly, $\ker(\s{I})\supset \f{i}^{\Skew}$, and $\f{g}^{\scl}\coloneqq \f{g}^{\ker(\s{I})}$ is just $\bb{C}[N^{\oplus}]$ together with its Poisson bracket as in \eqref{Pois}.  In general, let $\s{I}^{\f{i}}:\f{g}^{\Hall}\rar \f{g}^{\f{i}}$ denote the projection.

For $\f{g}$ equal to $\f{g}^{\Hall}$, $\f{g}^{\f{i}}$, $\f{g}^q$, or $\f{g}^{\scl}$, we denote the corresponding Lie group $G$ by $G^{\Hall}$, $G^{\f{i}}$, $G^q$, or $G^{\scl}$, respectively.  The notation for the associated completions and scattering diagrams will be similarly obvious except for sometimes using ``$\Hall$'' instead of ``$\reg$.''\footnote{Note that we could define the Hall algebra scattering diagram using the full Hall algebra $H(Q,W)$ in place of $H_{\reg}(Q,W)$ (as is done in \cite{Bridge}), or alternatively using just the composition algebra $\s{C}(Q,W)$.  The advantage of using $H_{\reg}(Q,W)$ or $\s{C}(Q,W)$ instead of $H(Q,W)$ is just for convenience when we talk about applying integration maps.}

\subsubsection{The Hall algebra scattering diagram}\label{HallScatsubsub}

\begin{dfn}\label{semistableDef}
Given $\stab \in M_{\R}$, an object $E \in \rep(Q,W)$ is said to be \textbf{$\stab$-semistable} if 
\begin{itemize}
	\item $\stab(E) = 0$, 
	\item every subobject $B \subset E$ satisfies $\stab(B) \leq 0$.  If, furthermore, this inequality is strict, then we say that $E$ is $\stab$-stable.
\end{itemize}
The notion of semistability given above is due to \cite{King}.
Let $\s{M}_{\sst}(\stab)\subset \s{M}$ denote the substack of $\s{M}$ representing the $\stab$-semistable objects, and let $1_{\sst}(\stab):=[\s{M}_{\sst}(\stab)\subset \s{M}] \in \wh{G}_{\Hall}$.  
\end{dfn}

The scattering diagram defined in the following theorem of Bridgeland is what one calls the \textbf{Hall algebra scattering diagram}.
\begin{thm} \cite[Theorem 6.5]{Bridge} \label{thm:Hallscattering}
There exists a consistent scattering diagram $\frakD^{\Hall}$ in $M_{\R}$ over $\f{g}^{\Hall}$ such that:

\begin{enumerate}
	\item The support $\Supp(\f{D}^{\Hall})$ consists of those $\stab \in M_{\R}$ for which there exist $\stab$-semistable objects in $\rep(Q,I)$; 
	\item For $\stab \subset \Supp(\frakD^{\Hall})\setminus \Joints(\frakD^{\Hall})$, there is a unique wall $(\f{d},g_{\f{d}})\in \f{D}^{\Hall}$ for which $\f{d}\ni \stab$.  For this wall, we have 
$\exp(g_{\f{d}}) = 1_{\sst} (\stab) \in \hat{G}_{\Hall}$.
\end{enumerate}
\end{thm}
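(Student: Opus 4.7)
The plan is to follow Bridgeland's argument \cite{Bridge}, which constructs $\f{D}^{\Hall}$ directly from Harder--Narasimhan (HN) filtrations in $\rep(Q,W)$. The core input is that, for every $\stab \in M_\R$, every object of $\rep(Q,W)$ has a unique HN filtration whose factors are $\stab$-semistable with strictly decreasing $\stab$-slope, and this decomposition gives rise to a canonical identity in the completed Hall algebra.

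First, I would define the walls. For a general $\stab \in M_\R$, the element $\log(1_{\sst}(\stab)) \in \wh{\f{g}}^{\Hall}$ decomposes according to the subalgebras $\f{g}^{\Hall,\parallel}_n$ indexed by primitive $n \in N^+$ with $\stab(n)=0$. Writing this decomposition as $\log(1_{\sst}(\stab))=\sum_n g_{n,\stab}$, I would place, for each primitive $n$, a wall supported on each top-dimensional cell of the locus in $n^\perp$ on which $g_{n,\stab}$ is constant (outside higher-codimension strata). The commutation requirement of Definition \ref{WallDfn} at codimension-one meetings is automatic: any two such scattering functions arise as distinct $\f{g}^{\Hall,\parallel}$-components of the \emph{same} element $\log(1_{\sst}(\stab))$, and distinct $\f{g}^{\Hall,\parallel}_n$-components commute in $\wh{\f{g}}^{\Hall}$. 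Property (1) and the formula $\exp(g_{\f{d}})=1_{\sst}(\stab)$ in property (2) then follow directly from the definition, once one observes that at a generic point of $\Supp(\f{D}^{\Hall})$ only a single primitive $n$ contributes.

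Second, consistency is proven through HN wall-crossing. For any general $\stab \in M_\R$, the HN identity in $\wh{H}(Q,W)$ reads
\[
1_{\s{M}} \;=\; \sum_{k\geq 0}\;\sum_{\substack{d_1,\ldots,d_k \in N^+\\ \stab\text{-slopes strictly decreasing}}} 1_{\sst}(\stab)_{d_1} * \cdots * 1_{\sst}(\stab)_{d_k}.
\]
Applying this identity simultaneously for two general stability conditions $\stab_1,\stab_2$ connected by a short segment $\gamma$, and comparing via the common refinement by a third stability condition, produces an identity expressing the ordered product of $\exp$'s of wall contributions crossed by $\gamma$ in terms of endpoint data only. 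This is precisely the statement that $\Phi_{\gamma,\f{D}^{\Hall}}$ depends only on the endpoints of $\gamma$; since any loop around a joint can be built from such short segments, consistency follows.

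The main obstacle is converting the global HN identity into local consistency at each joint while controlling the non-Abelian walls subtleties. I would handle this by inductive truncation modulo $\f{g}^{\Hall,\geq k}$: for each $k$, only finitely many HN types of bounded total dimension vector contribute, reducing consistency at each joint to a finite identity among explicit Hall algebra elements. This same finiteness also delivers the scattering-diagram finiteness condition of Definition \ref{WallDfn}. The upshot is that $\f{D}^{\Hall}$ is well-defined and consistent, and the support and scattering-function formulas in (1) and (2) hold by construction.
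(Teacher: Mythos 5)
The paper does not prove this statement---it is cited directly from Bridgeland \cite[Thm.~6.5]{Bridge}---so there is no proof of the paper's own to compare against. Your sketch does track Bridgeland's actual strategy in broad strokes (build walls from $1_{\sst}(\stab)$, deduce consistency via Harder--Narasimhan filtrations, truncate modulo $\f{g}^{\geq k}$ to reduce to finite identities), but two of the steps you outline are wrong. The commutation claim is misdirected: when $\codim(\f{d}_1\cap\f{d}_2)=1$, strict convexity of $N^{\oplus}$ forces $n_{\f{d}_1}=n_{\f{d}_2}$, so both scattering functions live in the \emph{same} $\f{g}^{\Hall,\parallel}_n$, and since $\f{g}^{\Hall}$ does not have Abelian walls nothing is automatic here. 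Moreover, your statement that ``distinct $\f{g}^{\Hall,\parallel}_n$-components commute in $\wh{\f{g}}^{\Hall}$'' is simply false in general---brackets of scattering functions coming from different $\f{g}_n$'s are precisely what generate outgoing walls. The commutation requirement is in fact vacuous for $\f{D}^{\Hall}$ because, as item (2) of the theorem says, up to equivalence each non-joint point lies on at most one wall; but the justification you gave does not establish this and cannot be salvaged as written.

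Your Harder--Narasimhan identity is also not correct for a point stability condition $\stab\in M_{\R}$. By Definition~\ref{semistableDef}, $1_{\sst}(\stab)_d$ vanishes unless $\stab(d)=0$, so the right-hand side of your displayed formula only accounts for objects whose dimension vector lies in $\stab^{\perp}$---a tiny piece of $1_{\s{M}}$, not all of it. The correct identity expresses $1_{\s{M}}$ as an ordered product of $1_{\sst}$'s over a one-parameter \emph{family} of stability conditions (equivalently, the strata of a slope function), with the ordering coming from the slope. Finally, ``comparing via the common refinement by a third stability condition'' is too vague to check; making that precise is exactly where the real content of Bridgeland's consistency argument lies, so the sketch leaves the crucial step unaddressed.
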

\begin{rmk}\label{general}
We say $\stab\in M_{\bb{R}}$ is \textbf{general} if it is not in the intersection of two distinct hyperplanes of the form $n^{\perp}$ for $n\in N\setminus \{0\}$.  Since the joints of $\f{D}^{\Hall}$ are codimension $2$ subsets of $M_{\bb{R}}$ and have rational slope, Theorem \ref{thm:Hallscattering} gives the scattering functions of $\f{D}^{\Hall}$ at all general points $\stab\in M_{\bb{R}}$.  Alternatively, we could use a more refined notion of general.  Call $\stab\in M_{\bb{R}}$  \textbf{special} if at least one of the following holds:
\begin{itemize}
    \item There exists a pair of $\stab$-semistable objects with non-parallel dimension vectors;
    \item Some $E\in \rep(Q,W)$ is $\stab$-semistable, but for $0<\epsilon\ll 1$, $E$ is either not $(\stab+\epsilon p^*(\dim(E)))$-semistable or not $(\stab-\epsilon p^*(\dim(E)))$-semistable.
\end{itemize}
The former condition accounts for joints where two walls of different slopes intersect, while the latter accounts for intersections of walls with the same slope.  That is, $\stab\in \Joints(\f{D}^{\Hall})$ if and only if $\stab$ is special.  Theorem \ref{MainIntro} will still hold and will be slightly stronger if we define general to mean not special.
\end{rmk}

Note that we obtain new scattering diagrams $\f{D}^{\f{i}}$, $\f{D}^q$, and $\f{D}^{\scl}$ over $\f{g}^{\f{i}}$, $\f{g}^q$, and $\f{g}^{\scl}$, respectively, by applying $\s{I}^{\f{i}}$, $\s{I}^q$, or $\s{I}^{\scl}$ to $\f{D}^{\Hall}$.  The scattering diagram $\f{D}^{\scl}$ is what Bridgeland calls the \textbf{stability scattering diagram}.  We call $\f{D}^q$ the \textbf{quantum stability scattering diagram}.  

\begin{eg} \label{ex:a2diag}
Let us consider the $A_2$ quiver $1  \rightarrow 2$  with $W=0$. 
The corresponding matrix $B$ is $\left(
\begin{array}{c c}
0 & -1 \\ 1 & 0
\end{array}
\right)$ as in Example \ref{ex:firstdiag}.  Let us explicitly describe the Hall algebra scattering diagram $\f{D}^{\Hall}$ from Theorem \ref{thm:Hallscattering} in this case.  
Note that there are 3 indecomposable representations of $A_2$ up to isomorphism: $\C \rightarrow 0$, $0 \rightarrow \C$, and $\C \rightarrow \C$ (the last map being nonzero).  Consider $\frakd=(1,0)^{\perp}$. 
For any point $\stab \in \frakd$, one can see that the representations $(\C \rightarrow 0)^{\oplus k}$ are $\stab$-semistable for any positive integer $k$, and we find $1_{\sst}(\stab)=\sum_{k\geq 0}(\bb{C}\rightarrow 0)^{\oplus k}$.  We similarly compute that for $\stab\in (0,1)^{\perp}$, $1_{\sst}(\stab)=\sum_{k\geq 0}(0\rightarrow \bb{C})^{\oplus k}$, and for $\stab\in \bb{R}_{\geq 0}(1,-1)$, we have $1_{\sst}(\stab)=\sum_{k\geq 0}(\C\rightarrow \C)^{\oplus k}$.
Note that $(\C \rightarrow \C)$ contains $(0 \rightarrow \C)$ as a subrepresentation, and so $(\C \rightarrow \C)^{\oplus k} $ is not $ (-\alpha,\alpha)$-semistable for $\alpha \in \bb{R}_{>0}$.  There are no other $\stab$-semistable representations for any $\stab$ in this example, so the Hall algebra scattering diagram is as in Figure \ref{a2}.  Note that $\f{D}^{q}$, obtained from applying the quantum integration map $\s{I}$ to the scattering functions of $\f{D}^{\Hall}$ (cf. \S \ref{HallInitialSection} for such computations) yields the consistent scattering diagram of Example \ref{ex:firstdiag}.

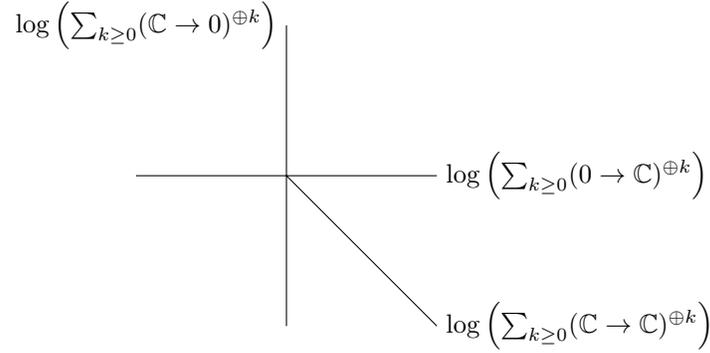
\begin{figure}[htb]
\centering
\begin{tikzpicture}
\draw
(-2,0) -- (2,0) node[right] {$\log \left(\sum_{k\geq 0} (0 \rightarrow \C)^{\oplus k}\right)$}
(0,-2) -- (0,2) node[left] {$\log \left(\sum_{k\geq 0} (\C \rightarrow 0)^{\oplus k}\right)$}
(0,0) -- (2,-2) node[right] {$\log \left(\sum_{k\geq 0} (\C \rightarrow \C)^{\oplus k}\right)$};
\end{tikzpicture}
\caption{The $A_2$ Hall algebra scattering diagram.} \label{a2}
\end{figure}
\end{eg}

\subsubsection{Genteel potentials}\label{GenteelSect}

We say that a quiver with potential $(Q,W)$ is \textbf{genteel} (or that $W$ is genteel) if the only incoming walls of $\f{D}^{\Hall}$ are 
\begin{align}\label{DHallIn}
\f{D}_{\In}^{\Hall}\coloneqq \{e_i^{\perp},\log 1_{\sst}(p^*(e_i))\}.
\end{align}
Theorems \ref{KSGS} and \ref{thm:Hallscattering} together imply the following:
\begin{lem}\label{GenteelProp}
If $(Q,W)$ is genteel, then $\f{D}_{\scat}^{\Hall}:=\scat(\f{D}_{\In}^{\Hall})$ exists and equals $\f{D}^{\Hall}$ (up to equivalence).
\end{lem}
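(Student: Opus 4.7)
The plan is to derive the lemma directly by combining the existence result of Theorem \ref{thm:Hallscattering} with the uniqueness half of Theorem \ref{KSGS}, once the genteel hypothesis is unpacked. No new construction is required; the content is essentially bookkeeping that checks the relevant hypotheses.

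First, invoke Theorem \ref{thm:Hallscattering} to obtain the consistent Hall algebra scattering diagram $\f{D}^{\Hall}$ over $\wh{\f{g}}^{\Hall}$. Second, appeal to the definition of ``genteel,'' which says exactly that the set of incoming walls of $\f{D}^{\Hall}$ agrees with $\f{D}_{\In}^{\Hall}=\{(e_i^{\perp},\log 1_{\sst}(p^*(e_i)))\}$. Consequently every wall of $\f{D}^{\Hall}\setminus \f{D}_{\In}^{\Hall}$ is outgoing, so $\f{D}^{\Hall}$ itself provides a consistent extension of $\f{D}_{\In}^{\Hall}$ in which all additional walls are outgoing. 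Note that $\f{D}_{\In}^{\Hall}$ is a finite scattering diagram whose walls are each supported on a full hyperplane $e_i^{\perp}$ with $e_i$ primitive, which is precisely the form required by Theorem \ref{KSGS}.

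Finally, apply the second, non-Abelian clause of Theorem \ref{KSGS}: even though $\f{g}^{\Hall}$ typically fails to have Abelian walls, that clause asserts that whenever a consistent extension of the initial data by outgoing walls exists, it is unique up to equivalence. The diagram $\f{D}^{\Hall}$ witnesses existence, and so $\scat(\f{D}_{\In}^{\Hall})$ is well-defined and equals the equivalence class of $\f{D}^{\Hall}$. The only conceptual point to be careful about, which is the ``obstacle'' such as it is, is to make sure one invokes the non-Abelian uniqueness clause rather than the first clause of Theorem \ref{KSGS}, since the latter's existence portion is not applicable in this non-skew-symmetric setting; here existence is imported from Theorem \ref{thm:Hallscattering} instead.
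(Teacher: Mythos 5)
Your proof is correct and matches the paper's own reasoning, which is stated only as the one-line remark that Theorems \ref{KSGS} and \ref{thm:Hallscattering} together imply the lemma; you have simply unpacked that implication, including the correct observation that existence is imported from Theorem \ref{thm:Hallscattering} while only the non-Abelian uniqueness clause of Theorem \ref{KSGS} is used.
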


It is expected (cf. \cite[Conj. 3.3.4]{WCS}) that for every $2$-acyclic quiver $Q$, a generic potential $W$ will be genteel (at least over $\ad(\f{g}^q)$ in the sense explained below).  The following is proved in \cite[\S 7.1]{DMan}:
\begin{lem}\label{AcyclicGenteel}
If $Q$ is acyclic --- or more generally, if the only cycles in $Q$ are composed of loops (i.e., $1$-cycles) --- then $(Q,0)$ is genteel.
\end{lem}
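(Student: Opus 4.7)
The plan is to show that every incoming wall of $\f{D}^{\Hall}$ is equivalent to one in $\f{D}_{\In}^{\Hall}$. Let $Q^{\circ}$ denote the quiver obtained from $Q$ by removing all loops; by hypothesis, $Q^{\circ}$ is acyclic. For walls of direction $-p^*(e_i)$, the first observation is that if $\stab \in e_i^{\perp}$, then every representation $V$ of dimension vector $ke_i$ is automatically $\stab$-semistable (any subrepresentation has dimension vector $me_i$ for some $0 \leq m \leq k$, and $\stab(m e_i) = m \stab(e_i) = 0$). The portion of $1_{\sst}(\stab)$ in dimensions $\Z_{>0} \cdot e_i$ is therefore constant in $\stab \in e_i^{\perp}$, so up to equivalence the entire hyperplane $e_i^{\perp}$ forms a single wall with scattering function $\log 1_{\sst}(p^*(e_i))$, namely the wall in $\f{D}_{\In}^{\Hall}$. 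What remains is to rule out incoming walls of any other primitive direction.

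Suppose toward contradiction that $(\f{d}, g_{\f{d}}) \in \f{D}^{\Hall}$ is an incoming wall with primitive direction $n \in N^+$ and $n \neq e_i$ for all $i$, so that $|\supp(n)| \geq 2$. Since $\f{d}$ has codimension one in $M_{\bb{R}}$ and contains $p^*(n)$, Theorem \ref{thm:Hallscattering} together with the closedness of semistability in $\stab$ guarantees a representation $V$ of dimension vector $kn$ (for some $k \geq 1$) that is $p^*(n)$-semistable. Since $Q^{\circ}|_{\supp(n)}$ is acyclic and nonempty, it admits a sink $v$; because $v$ has no non-loop arrows to other vertices of $\supp(n)$ and loops at $v$ preserve $V_v$, the subspace $V_v \subset V$ is a subrepresentation of dimension $kn_v e_v$. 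Semistability forces $0 \geq p^*(n)(e_v) = B(e_v, n)$, while the sink property (with loop contributions at $v$ canceling in the formula for $B$) yields $B(e_v, n) = \sum_{j \in \supp(n) \setminus \{v\}} n_j a_{jv} \geq 0$. Therefore $B(e_v, n) = 0$ and $a_{jv} = 0$ for every $j \in \supp(n) \setminus \{v\}$, i.e., $v$ is isolated from $\supp(n) \setminus \{v\}$ in $Q^{\circ}$.

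The isolation of $v$ gives a decomposition $V = V_v \oplus V'$ into subrepresentations, with $V'$ of dimension $kn'$ where $n' \coloneqq n - n_v e_v$. Because $v$ is isolated in $Q^{\circ}|_{\supp(n)}$, the functionals $p^*(n)$ and $p^*(n')$ agree on the span of $\{e_j : j \in \supp(n')\}$, so $V'$ is $p^*(n')$-semistable. Iterating this stripping (on $V'$ and a sink of $Q^{\circ}|_{\supp(n')}$, and so on), I conclude that $\supp(n)$ is pairwise disconnected in $Q^{\circ}$. Consequently, any representation of dimension $kn$ decomposes as $\bigoplus_{i \in \supp(n)} V^{(i)}$ with each $V^{(i)}$ a subrepresentation supported at vertex $i$. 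Semistability of such a representation at any $\stab \in n^{\perp}$ forces $\stab(e_i) \leq 0$ for every $i \in \supp(n)$, and combined with $\stab(n) = 0$, this forces $\stab(e_i) = 0$ for all such $i$. Hence $\stab$ lies in $\bigcap_{i \in \supp(n)} e_i^{\perp}$, a subspace of codimension $|\supp(n)| \geq 2$, contradicting that $\f{d}$ has codimension one.

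The main obstacle will be a clean treatment of loops: loops at a vertex $v$ do not obstruct $V_v$ or $V^{(i)}$ from being subrepresentations but do enrich their internal structure, so all numerical computations must be phrased in terms of dimension vectors and the form $B$ (unaffected by loops since $B(e_i, e_i) = a_{ii} - a_{ii} = 0$). A secondary subtlety is passing from semistability at generic $\stab \in \f{d}$ to semistability at $p^*(n)$ itself, which is handled by the closedness of $\stab$-semistability as a condition on $\stab$.
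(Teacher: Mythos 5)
The paper does not give a proof of Lemma \ref{AcyclicGenteel}; it simply cites \cite[\S 7.1]{DMan} and remarks that the argument there is inspired by \cite[Lem.\ 11.5]{Bridge} (with the caveats about Bridgeland's definition of ``genteel'' discussed in the remark following the lemma). Your argument is correct and is the natural form of that sink-stripping approach, adapted to the ``no extra incoming walls'' definition used here: from a sink $v$ of $Q^{\circ}|_{\supp(n)}$ you combine the semistability inequality $B(e_v,n)\le 0$ with the sink inequality $B(e_v,n)\ge 0$ to force $v$ to be isolated, strip it off via the direct-sum decomposition, iterate to conclude $\supp(n)$ is totally disconnected, and then observe that any $\stab$ with semistable objects of class $kn$ must lie in $\bigcap_{i\in\supp(n)} e_i^{\perp}$, which has codimension $\ge 2$, ruling out a wall. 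This is exactly the mechanism the paper's remark alludes to (and it is why the naive ``self-semistable'' modification of Bridgeland's definition would fail but the ``incoming walls'' version still works).

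The one step I would spell out more carefully is ``Theorem \ref{thm:Hallscattering} together with the closedness of semistability in $\stab$ guarantees a representation $V$ of dimension vector $kn$ that is $p^*(n)$-semistable.'' Closedness of the cone $C_V=\{\stab\in n^{\perp}: V\text{ is }\stab\text{-semistable}\}$ is per-representation, and there are infinitely many $V$ of dimension $kn$; a priori each $C_V$ could miss $p^*(n)$ even though their union covers $\f{d}^{\circ}\setminus\Joints(\f{D}^{\Hall})$. What saves the argument is that $C_V$ depends only on which dimension vectors $d\le kn$ occur as subobjects of $V$, so only finitely many distinct cones $C_V$ arise; their union is then closed, contains the dense subset $\f{d}^{\circ}\setminus\Joints(\f{D}^{\Hall})$ of $\f{d}$, hence contains $p^*(n)\in\f{d}$, giving the desired $V$. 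You flag this subtlety yourself at the end, and it is easily filled, so I would not call it a gap --- just a point worth making explicit.
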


\begin{rmk}
On its face, Lemma \ref{AcyclicGenteel} is, in the cases without loops, the same as \cite[Lem. 11.5]{Bridge} (and the proof in \cite{DMan} is inspired by that in \cite{Bridge}).  However, \cite[\S 11.5]{Bridge} uses a slightly different and possibly flawed definition of genteel.  In \cite[Def. 11.3]{Bridge}, an object $E\in \rep(Q,W)$ is called self-stable if it is stable with respect to the stability condition $-p^*(\dim(E))$.  Then $(Q,W)$ is called genteel if the only self-stable objects are the simple objects $S_i$ for $i\in Q_0$.  Unfortunately, as pointed out to us by Lang Mou and acknowledged in \cite[arXiv v4]{Bridge}, it is not clear that this version of genteel really does imply the claim about incoming walls being as in \eqref{DHallIn}.  For this one would need to replace ``self-stable'' with ``self-semistable,'' but doing so results in other problems, e.g., acyclic examples which would fail to be genteel.  We have therefore taken the motivating property regarding incoming walls as our definition.
\end{rmk}

A potentially weaker (but for most purposes equally useful) version of genteel is as follows: we say that $(Q,W)$ is genteel over $\f{g}^{\f{i}}$ if, up to equivalence, the only incoming walls of $\f{D}^{\f{i}}$ are 
\begin{align*}
\f{D}_{\In}^{\f{i}}\coloneqq \{e_i^{\perp},\s{I}^{\f{i}}(\log 1_{\sst}(p^*(e_i)))\}.
\end{align*}
In general (even without genteelness), Theorem \ref{KSGS} guarantees the existence of $$\f{D}^{\f{i}}_{\scat}\coloneqq \scat(\f{D}^{\f{i}}_{\In}).$$
As with Lemma \ref{GenteelProp}, $W$ being genteel over $\f{g}^{\f{i}}$ means that $\f{D}^{\f{i}}_{\scat} = \f{D}^{\f{i}}$.  We note that genteel implies genteel over every $\f{g}^{\f{i}}$, and genteel over $\f{g}^{\f{i}}$ implies genteel over $\f{g}^{\f{i}'}$ for every $\f{i}'\supset \f{i}$.

\begin{prop}[\cite{Mou}, Cor. 1.2(i)]\label{GreenGenteel}
Let $(Q,W)$ be a quiver with potential (and no loops) such that $W$ is non-degenerate and $Q$ admits a green-to-red sequence.\footnote{See \cite[Def. 7.2]{DWZ} for the definition of a non-degenerate potential, and see \cite[Def. 3.1.1]{MulGreen} for the definition of a quiver admitting a green-to-red sequence (or see \cite[Def. 8.27]{GHKK} for the equivalent notion of a quiver with a ``large cluster complex'').  It is known (at least when allowing infinite potentials) that all quivers $Q$ without $2$-cycles admit non-degenerate potentials, cf. \cite[Cor. 7.4]{DWZ}.  In particular, $(Q,0)$ with $Q$ acyclic satisfy the hypotheses for Proposition \ref{GreenGenteel}.}  Then $(Q,W)$ is genteel over $\f{g}^{q}$ and $\f{g}^{\scl}$.  
\end{prop}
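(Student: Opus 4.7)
The proof strategy, following Mou, reduces to an identification between the quantum stability scattering diagram and the Gross-Hacking-Keel-Kontsevich cluster scattering diagram, combined with known structure results about the latter in the presence of a green-to-red sequence. The plan has three main steps.

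First, one identifies the quantum stability scattering diagram $\f{D}^q$ (respectively, its classical limit $\f{D}^{\scl}$) with the quantum (resp. classical) GHKK cluster scattering diagram associated to $Q$. For the classical case with $W=0$ and $Q$ acyclic, this is essentially the content of Proposition \ref{IgenteelProp}; Mou extends this identification to arbitrary non-degenerate $W$ and to the quantum setting. The key point is that non-degeneracy of $W$ ensures that the Donaldson-Thomas type invariants captured by $\s{I}^q(\log 1_{\sst}(\stab))$ agree with the wall functions prescribed by GHKK's construction; in particular, non-degeneracy prevents pathological contributions from the potential that would destroy this agreement.

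Second, one invokes the structure theorem for GHKK cluster scattering diagrams. When $Q$ admits a green-to-red sequence, equivalently when the cluster complex is large in the sense of \cite[Def. 8.27]{GHKK}, the cluster scattering diagram is uniquely determined by its initial scattering diagram consisting of walls $(e_i^{\perp}, f_i)$ for $i \in Q_0$, with $f_i$ the standard (quantum) dilogarithm wall function. In this situation the cluster chambers tile a dense region of $M_{\bb{R}}$, and a standard argument (the symmetry of the cluster complex under mutation and the reachability of the negative initial seed) forces every non-initial wall of the resulting consistent scattering diagram to be outgoing.

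Combining these, $\f{D}^q$ has only incoming walls supported on the coordinate hyperplanes $e_i^{\perp}$. By Theorem \ref{thm:Hallscattering}, the scattering function on such an incoming wall must equal $\s{I}^q(\log 1_{\sst}(p^*(e_i)))$. This is precisely the definition of $(Q,W)$ being genteel over $\f{g}^q$; applying $\pi_{t\mapsto 1}$ (cf. Remark \ref{PvC}) then yields genteelness over $\f{g}^{\scl}$. The main obstacle is the first step: verifying the identification between the Hall algebra scattering diagram and the GHKK cluster scattering diagram in the presence of a non-trivial non-degenerate potential requires a delicate analysis of how $W$ alters the moduli of semistable representations and the resulting motivic integrals, and it is this technical heart of the argument that \cite{Mou} establishes.
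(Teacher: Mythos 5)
The paper does not actually give a proof of this proposition: it is imported verbatim from \cite[Cor.\ 1.2(i)]{Mou}, with a remark that \cite[Thm.\ 1.2.2]{Qin} proves a closely related statement. So there is no in-paper argument to compare against; I can only evaluate your reconstruction on its own terms.

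There is a genuine gap, and it sits at the very heart of the argument. Your Step 1 asserts that ``one identifies the quantum stability scattering diagram $\f{D}^q$ with the GHKK cluster scattering diagram.'' But that identification \emph{is} the genteelness statement: by definition, $(Q,W)$ is genteel over $\f{g}^q$ precisely when $\f{D}^q = \scat(\f{D}^q_{\In})$, i.e.\ when the scattering diagram built from stability conditions coincides with the one generated from the standard initial walls. You cannot invoke it as a preparatory step. (Note also that Proposition~\ref{IgenteelProp}, which you cite as the $W=0$ case, concerns $\f{D}^{\Hall}_{\scat}=\scat(\f{D}^{\Hall}_{\In})$, not the Bridgeland diagram $\f{D}^{\Hall}$ from Theorem~\ref{thm:Hallscattering}; the two agree only once genteelness is already known.) Your Step 2 then misplaces the role of the green-to-red hypothesis: $\scat(\f{D}^q_{\In})$ has only outgoing non-initial walls \emph{by construction}, for any $Q$, since the quantum torus has Abelian walls and Theorem~\ref{KSGS} applies unconditionally. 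The green-to-red sequence does nothing there. Where it is actually needed is in showing that the stability scattering diagram $\f{D}^q$ has no incoming walls beyond the standard $e_i^{\perp}$'s --- that is, precisely the content of Step~1. The argument in \cite{Mou} goes through categorical mutation: a green-to-red sequence gives a path of finitely many wall-crossings from the all-positive chamber $\s{C}^+$ to the all-negative chamber $\s{C}^-$ in the cluster complex, and one tracks, via the mutation theory of quivers with (non-degenerate) potential, how the class of $\stab$-semistable objects transforms along this path; this ultimately forces any purported extra incoming wall of $\f{D}^q$ to be trivial. Non-degeneracy of $W$ is used to ensure the mutations of $(Q,W)$ are well-defined and compatible with the categorical picture, not merely to ``prevent pathological contributions'' as you suggest. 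As written, your reconstruction does not contain the substantive part of the proof.
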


We note that a version of Proposition \ref{GreenGenteel} over $\ad(\f{g}^{\scl})$ (i.e., the quotient of $\f{g}^{\scl}$ by its center) was also proved in \cite[Thm. 1.2.2]{Qin}.  Also, \cite[Cor. 1.2(ii)]{Mou} proves that non-degenerate potentials for the Markov quiver (which does not admit a green-to-red sequence) are genteel over $\ad(\f{g}^{q})$ and $\ad(\f{g}^{\scl})$.

\begin{eg} \label{eg:initial}
For the $A_2$-quiver of Example \ref{ex:a2diag}, the simple representations are $\C \rightarrow 0$ and $0 \rightarrow \C$. Thus $\f{D}_{\In}^{\Hall}$ would be as in Figure \ref{initial}.  By Lemmas \ref{GenteelProp} and \ref{AcyclicGenteel}, we have that $\f{D}_{\scat}^{\Hall}$ exists and agrees with $\f{D}^{\Hall}$ from Figure \ref{a2}.

\begin{figure}[htb]
\centering
\begin{tikzpicture}
\draw
(-2,0) -- (2,0) node[right] {$\log\left(\sum_{k\geq 0} (0 \rightarrow \C)^{\oplus k}\right)$}
(0,-2) -- (0,2) node[left] {$\log\left(\sum_{k\geq 0} (\C \rightarrow 0)^{\oplus k}\right)$};
\end{tikzpicture}
\caption{The initial Hall algebra scattering diagram $\f{D}_{\In}^{\Hall}$ for the $A_2$-quiver.} \label{initial}
\end{figure}
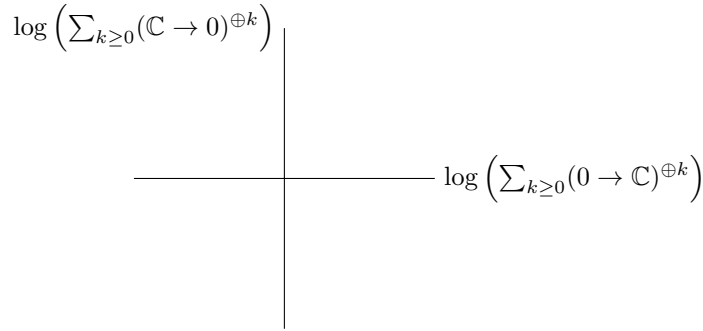
\end{eg}

\subsection{The initial Hall algebra scattering diagrams}\label{HallInitialSection}

We next wish to better understand the scattering functions of \eqref{DHallIn}.  We assume from now on that $Q$ contains no loops or oriented 2-cycles, although a generalization to cases with loops is possible --- cf. \cite[Prop. 7.7]{DMan} for a description of the incoming walls associated to vertices with a loop.

For each $i\in S$, we will find a nice expression for $\log 1_{\sst}(p^*(e_i))$ in terms of powers of $\kappa_i$.  We will need the \textbf{quantum dilogarithm}
\begin{align*}
\Psi_t(x):&=\sum_{k=0}^{\infty} \frac{t^{-k(k-1)/2} x^k}{(t-t^{-1})(t^2-t^{-2})\cdots(t^k-t^{-k})} \\
          &=\sum_{k=0}^{\infty} \frac{(tx)^k}{\prod_{j=1}^k (t^{2j}-1)},
\end{align*}
and the standard fact that $\log \Psi_t(x)=-\Li(-x;t)$, where
\begin{equation} \label{eq:li}
\Li(x;t)\coloneqq \sum_{k=1}^{\infty} \frac{x^k}{k(t^k-t^{-k})}.
\end{equation}

Denote
\begin{align*}
    f_i\coloneqq 1_{\sst}(p^*(e_i)) = \sum_{k=0}^{\infty} \delta_{ki}.
\end{align*}
By \eqref{delta-ki-kappa}, we can rewrite $f_i$ as\footnote{This expression seems to be well-known to experts, cf. \cite[\S 6.4]{KSmotivic}.}
\begin{align*}
        f_i=\sum_{k=0}^{\infty} \frac{\kappa_i^k}{\prod_{j=1}^k (q^j-1)} = \Psi_t\left(\frac{\kappa_i}{t}\right).
\end{align*}

Hence, using that $\log \Psi_t(x)=-\Li(-x;t)$, we find
\begin{align}\label{log-fi}
\log f_i = -\Li(-\kappa_i/t;t):&=\sum_{k=1}^{\infty} \frac{(-1)^{k-1}}{k(t^{k}-t^{-k})}\left(\frac{\kappa_i}{t}\right)^k  \\
                                       &=\sum_{k=1}^{\infty} \frac{(-1)^{k-1}}{k(q^{k}-1)}\kappa_i^k. \nonumber
\end{align}
We denote
\begin{align}\label{Rk}
    R_k\coloneqq \frac{(-1)^{k-1}}{k(q^{k}-1)}
\end{align}
so $\log f_i$ can be written as
\begin{align}\label{logfi}
    \log f_i = \sum_{k=1}^{\infty} R_k \kappa_i^k.
\end{align}

It follows immediately from \eqref{log-fi}, \eqref{Ikappa}, and Theorem \ref{KSGS} that applying $\s{I}_t$ to $\f{D}_{\scat}^{\Hall}$ produces the quantum cluster scattering diagrams of \cite[\S 4.2]{Man3}:

\begin{prop}\label{IgenteelProp}
Applying $\s{I}_t$ to $\f{D}_{\scat}^{\Hall}$ produces the scattering diagram $\f{D}^q_{\scat}\coloneqq \scat(\f{D}^q_{\In})$ over the quantum torus algebra, where 
\begin{align}\label{Dqin}
\f{D}^q_{\In}\coloneqq \{e_i^{\perp},-\Li(-z^{e_i},t)\}.
\end{align}
\end{prop}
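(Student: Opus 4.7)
The plan is to reduce the proposition to a direct computation on the incoming walls combined with the uniqueness assertion in Theorem \ref{KSGS}. The key observation is that $\s{I}_t$ is a $\C_{\reg}(t)$-algebra (hence Lie algebra) homomorphism which respects the $N^+$-grading, so wall-by-wall application of $\s{I}_t$ to a scattering diagram over $\f{g}^{\Hall}$ yields a scattering diagram over $\f{g}^q$, preserving wall supports, wall directions, the incoming/outgoing dichotomy, and --- because path-ordered products transform by $\s{I}_t$ --- consistency.

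The core computation is to evaluate $\s{I}_t$ on each incoming wall scattering function $\log f_i$ and match it with \eqref{Dqin}. Using \eqref{logfi} to write $\log f_i = \sum_{k \geq 1} R_k \kappa_i^k$, the algebra-homomorphism property of $\s{I}_t$, and Example \ref{IkappaEx} giving $\s{I}_t(\kappa_i) = tz^{e_i}$, together with $B(e_i,e_i)=0$ (so that $(tz^{e_i})^k = t^k z^{ke_i}$), one obtains
\[
\s{I}_t(\log f_i) \;=\; \sum_{k=1}^{\infty} \frac{(-1)^{k-1}}{k(q^k-1)} \, t^k z^{ke_i} \;=\; \sum_{k=1}^{\infty} \frac{(-1)^{k-1}}{k(t^k - t^{-k})}\, z^{ke_i} \;=\; -\Li(-z^{e_i};t),
\]
after rewriting $t^k/(q^k-1) = 1/(t^k - t^{-k})$. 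This is exactly the scattering function on $e_i^{\perp}$ prescribed by $\f{D}^q_{\In}$ in \eqref{Dqin}.

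To finish, I would invoke Theorem \ref{KSGS} for the Lie algebra $\f{g}^q$, which has Abelian walls since the commutator bracket on $\C_t[N^{\oplus}]$ is skew-symmetric with respect to $B$ (so \eqref{skewCondition} holds). Thus $\f{D}^q_{\scat} = \scat(\f{D}^q_{\In})$ exists and is the unique-up-to-equivalence consistent scattering diagram over $\f{g}^q$ whose non-incoming walls are outgoing and whose incoming walls agree with $\f{D}^q_{\In}$. The image $\s{I}_t(\f{D}_{\scat}^{\Hall})$ is a consistent scattering diagram over $\f{g}^q$, its incoming walls are $\f{D}^q_{\In}$ (by the core computation), and its non-incoming walls came from non-incoming walls of $\f{D}_{\scat}^{\Hall}$, hence are outgoing. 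By the uniqueness clause of Theorem \ref{KSGS}, $\s{I}_t(\f{D}_{\scat}^{\Hall})$ is equivalent to $\f{D}^q_{\scat}$, as claimed.

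The only mildly delicate point is confirming that no incoming wall is ``lost'' under $\s{I}_t$ (so that one really may invoke the uniqueness rather than a truncated version); this is immediate because the displayed formula shows $\s{I}_t(\log f_i) \neq 0$. Everything else is routine given the explicit form of $\log f_i$ in \eqref{logfi} and the homomorphism property of $\s{I}_t$, so I expect no genuine obstacle.
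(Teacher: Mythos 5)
Your proof is correct and follows the paper's approach exactly: the paper cites \eqref{log-fi}, \eqref{Ikappa}, and Theorem \ref{KSGS} as the ingredients, and your write-up simply spells out the routine details of how these combine — computing $\s{I}_t(\log f_i) = -\Li(-z^{e_i};t)$ and then applying the uniqueness clause of Theorem \ref{KSGS} over the skew-symmetric (hence Abelian-walled) Lie algebra $\f{g}^q$.
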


Applying $\pi_{t\mapsto 1}$, it follows that $\s{I}$ applied to $\f{D}_{\scat}^{\Hall}$ yields $\scat(\f{D}^{\scl}_{\In})$, where 
\begin{align*}
    \f{D}^{\scl}_{\In}\coloneqq \{e_i^{\perp},-\Li(-z^{e_i})\}.
\end{align*}
Here, $\Li(x)\coloneqq \sum_{k=1}^{\infty} \frac{x^k}{k^2}$ is the classical dilogarithm.  This is precisely \cite[Lem. 11.4]{Bridge}.

\begin{rmk}
Instead of viewing $\f{g}^{\Hall}$ as $(t-t^{-1})^{-1}H_{\reg}(Q,W)$ with its commutator bracket, one might try to view it as simply $H_{\reg}(Q,W)$ with the Poisson bracket of \eqref{regPois}, cf. Remark \ref{PvC}.  In this version, instead of having $g=\log(1_{\sst}(\stab))$ in Theorem \ref{thm:Hallscattering}, one has $g=(t-t^{-1})\log(1_{\sst}(\stab))$.  In \eqref{logfi}, this corresponds to redefining $R_k$ to be $\frac{(-1)^{k-1}}{kt(1+q+q^2+\ldots+q^{k-1})}$.
\end{rmk}

\section{Scattering diagrams in terms of tropical disks}\label{TropSection}

\subsection{Tropical disks}\label{TropDisks}

We now introduce the tropical disks whose enumerations will be related to the scattering diagrams of \S \ref{ScatterSection}.  For now, our tropical disks will live in $L_{\bb{R}}\coloneqq L\otimes \bb{R}$ for an arbitrary finite-rank lattice $L$ (later we will take $L=\Lambda^{\vee}=M$). 

Let $\?{\Gamma}$ be the topological realization of a finite connected tree without bivalent vertices, and let $\Gamma$ denote the complement of all but one of its $1$-valent vertices.  Denote this remaining $1$-valent vertex by $V_{\infty}$, and denote the edge containing this vertex by $E_{\infty}$.  Let $\Gamma^{[0]}$, $\Gamma^{[1]}$, and $\Gamma^{[1]}_{\infty}$ denote the sets of vertices, edges, and non-compact edges of $\Gamma$, respectively.  Let $e_\infty\coloneqq \# \Gamma^{[1]}_\infty$.  Equip $\Gamma$ with a weighting $w:\Gamma^{[1]}\rar \bb{Z}_{>0}$, plus a marking $\epsilon:S \stackrel{\sim}{\rar} \Gamma^{[1]}_{\infty}$ for some index set $S$ with $\#S=e_{\infty}$.  For $s\in S$, we denote $E_s\coloneqq \epsilon(s)$. 

A \textbf{parametrized tropical disk} $(\Gamma,w,\epsilon,h)$ in $L_{\bb{R}}$ is data $\Gamma$, $w$, and $\epsilon$ as above, plus a proper continuous map $h:\Gamma\rar L_{\bb{R}}$ such that:
\begin{itemize}
\item For each $E\in \Gamma^{[1]}$, $h|_E$ is an embedding into an affine line with rational slope;
\item For any vertex $V$ and edge $E\ni V$, denote by $u_{(V,E)}$ the primitive integral vector emanating from $h(V)$ into $h(E)$.  For each $V\in \Gamma^{[0]}\setminus \{V_{\infty}\}$, the following \textbf{balancing condition} is satisfied:
\begin{align*}
\sum_{E\ni V} w(E) u_{(V,E)} =0.
\end{align*}
\end{itemize}
For unbounded edges $E_s\ni V$, we may denote $u_{(V,E_s)}$ simply as $u_{E_s}$ or $u_s$.  An \textbf{isomorphism} of parameterized tropical disks $(\Gamma,h)$ and $(\Gamma',h')$ is a homeomorphism $\Phi:\Gamma\rar \Gamma'$ respecting the weights and markings such that $h=h'\circ \Phi$.  A \textbf{tropical disk} is then defined to be an isomorphism class of parameterized tropical disks.  We will let $(\Gamma,h)$ denote the isomorphism class it represents, and we will often further abbreviate this as simply $\Gamma$ or $h$.

A \textbf{tropical ribbon} $\wh{\Gamma}$ is a tropical disk $(\Gamma,w,\epsilon,h)$ as above, together with the additional data of a cyclic ordering of the edges at each vertex.  A tropical disk or ribbon is called \textbf{trivalent} if every vertex other than $V_{\infty}$ is trivalent.
 
The \textbf{degree} $\Delta$ of a tropical disk $(\Gamma,w,\epsilon,h)$ is the map $\Delta:S\rar L$ given by
\begin{align*}
\Delta(s)=w(E_s)u_{E_s}.
\end{align*}
Let $\Flags(\Gamma)$ denote the set of flags $(V,E)$, $V\in E$, of $\Gamma$.  The \textbf{type} of a tropical disk is the data of $\Gamma$, $w$, and $\epsilon$, along with the data of the map $u:\Flags(\Gamma)\rar L$, $(V,E)\mapsto u_{(V,E)}$.  Note that the type of a tropical disk determines its degree.

Similarly, the \textbf{type} of a tropical ribbon is the data of the type of the associated tropical disk, plus the data of the ribbon structure, i.e., the data of the cyclic orderings at each vertex.

Let $\A\coloneqq (A_s)_{s\in S}$ be a tuple of affine-linear subspaces $A_s\subset L_{\bb{R}}$, each with rational slope.  We say a tropical disk $(\Gamma,w,\epsilon,h)$ \textbf{matches the constraint} $\A$ if $h(E_s) \subset A_s$ for each $s\in S$.

\subsection{Tropical degrees, constraints, and multiplicities associated to a scattering diagram}\label{ScatTrop}

We now combine the setup of \S \ref{TropDisks} with that of \S \ref{Scattering}.  Let $L=\Lambda^{\vee}$.  Let $\{e_i\}_{i\in I}$ be a finite collection of vectors in $\Lambda^+$, indexed by a set $I$. 
Suppose we have an initial scattering diagram $\f{D}_{\In}$ over $\f{g}$, with $\f{D}_{\In}$ having the form
\begin{align*}
    \f{D}_{\In}=\{(\f{d}_i,g_i)|i\in I\},
\end{align*}
where for each $i$, we have $\f{d}_i=e_i^{\perp}$ and
\begin{align}\label{gi}
    g_i=\sum_{w\geq 1} g_{i,w} \in \f{g}_{e_i}^{\parallel},
\end{align}
where $g_{i,w}\in \f{g}_{we_i}$.  Assume as in Theorem \ref{KSGS} that for each $i$, the terms $g_{i,w}$ pairwise commute.  We denote
\begin{align*}
    v_i=p^*(e_i)
\end{align*}
for each $i\in S$, so $-v_i$ is the direction of the wall $\f{d}_i$.

\subsubsection{Degrees and constraints}\label{DegConst}

Let $\ww\coloneqq (\ww_i)_{i\in S}$ be a tuple of weight vectors $\ww_i\coloneqq (w_{i1},\ldots,w_{il_i})$ with $0< w_{i1} \leq \ldots \leq w_{il_i}$, $w_{ij}\in \bb{Z}$.  For $\Sigma_{l_i}$ denoting the group of permutations of $\{1,\ldots,l_i\}$, let \[\Aut(\ww)\subset \prod_{i\in S} \Sigma_{l_i}\] be the group of automorphisms of the second indices of the weights $\ww_i$ which act trivially on $\ww$.  We also define
\begin{align*}
    l(\ww)=\sum_{i\in S} l_i.
\end{align*}

Associated to $\ww$, we consider the degree $\Delta_{\ww}:S_{\ww}\rar L$ given by
\begin{align}\label{S}
    S_{\ww}\coloneqq \{(i,j)|i\in I, j\in \{1,\ldots,l_i\}\},
\end{align}
and
\begin{align}\label{Deltaij}
    \Delta_{\ww}((i,j))=w_{ij}v_i.
\end{align}

For the associated constraints $$\A_{\ww}=(A_{ij})_{(i,j)\in S_{\ww}}$$ 
we take the affine-linear space $A_{ij}$ to be a generic translate of $e_i^{\perp}$.   Here, the translates for different pairs $(i,j)$ are generic relative to each other.  
We fix such a choice of $\A_{\ww}$ for each $\ww$.  Given $\delta>0$, let $\delta\A_{\ww}$ denote the constraints obtained from $\A_{\ww}$ by multiplying each $A_{ij}$ by $\delta$ (i.e., the distance from the origin is multiplied by $\delta$).

\subsubsection{Multiplicities}\label{MultSect}

For each $(i,j)\in S_{\ww}$, we denote
\begin{align}\label{gij}
    g_{ij}\coloneqq g_{i,w_{ij}}\in \f{g}_{w_{ij}e_i}.
\end{align}
Denote
\begin{align*}
    n_{\ww}\coloneqq \sum_{(i,j)\in S_{\ww}}w_{ij} e_i \in \Lambda.
\end{align*}

Now consider a trivalent tropical disk $\Gamma$ of degree $\Delta_{\ww}$.  We will denote $E_{(i,j)}$ simply as $E_{ij}$.  We view $\Gamma$ as flowing towards the univalent vertex $V_{\infty}$, and we use this flow to inductively associate an element $g_{E}\in \f{g}_{n_E}\subset \f{g}$ to each edge $E$ of $\Gamma$, where $n_E$ is an element of $\Lambda^+$ such that $p^*(n_E)\in L$ is the weighted tangent vector to $h(E)$ pointing in the direction opposite the flow.

To each of the source edges $E_{ij}$, we associate the element $g_{ij}$ from \eqref{gij} above.  Now consider a vertex $V\neq V_{\infty}$ with $E_1,E_2$ flowing into $V$ and $E_3$ flowing out of $V$, and suppose that for $i=1,2$, we already have associated elements $g_{E_i}\in \f{g}_{n_{E_i}}$.  By the balancing condition, we have  $n_{E_3}=n_{E_1}+n_{E_2}$.  Let us assume that the labelling of the edges $E_1,E_2$ is such that
\begin{align}\label{posV}
    \{n_{E_1},n_{E_2}\} \geq 0
\end{align}
(otherwise we re-label).  We then define
\begin{align*}
    g_{E_3}\coloneqq [g_{E_1},g_{E_2}]\in \f{g}_{n_{E_3}}.
\end{align*}
We now define the \textbf{multiplicity} of $\Gamma$ as
\begin{align*}
    \Mult(\Gamma) \coloneqq  g_{E_{\infty}} \in \f{g}_{n_{\ww}}.
\end{align*}

Now suppose that $\f{g}$ is a Lie subalgebra of the commutator algebra of a $\Lambda^+$-graded associative algebra $\f{A}$, i.e., we have an associative product such that
\begin{align*}
[g_1,g_2]=g_1g_2-g_2g_1.
\end{align*}

\begin{eg}\label{fA}
For $\f{g}=\f{g}^{\Hall}$, Remark \ref{PvC} says that we can take $$\f{g}^{\Hall}=(t-t^{-1})^{-1}\cdot H_{\reg}(Q,W) \subset \f{A}^{\Hall}\coloneqq H_{\reg}(Q,W)[(t-t^{-1})^{-1}].$$
Similarly, we can take 
\begin{align*}
    \f{g}^q=(t-t^{-1})^{-1} \bb{C}_t[N^{\oplus}] \subset \f{A}^{q}\coloneqq \bb{C}_t[N^{\oplus}][(t-t^{-1})^{-1}]. 
\end{align*} 
Moreover, for any $\f{i}$ such that $\f{i}^{\Skew}\subseteq \f{i}\subseteq \ker(\s{I}_t)$, since $(t-t^{-1})\notin \f{i}$, we can take
\begin{align*}
    \f{g}^{\f{i}}=(t-t^{-1})^{-1} H_{\reg}(Q,W)/\f{i} \subset \f{A}^{\f{i}}\coloneqq  (H_{\reg}(Q,W)/\f{i})[(t-t^{-1})^{-1}]. 
\end{align*}
However, $t-t^{-1}=0$ in $\f{g}^{\scl}$, so we cannot apply this localization in the classical setting.  Instead, we take $\f{A}^{\scl}$ to be the universal enveloping algebra of $\f{g}^{\scl}$.

Alternatively, the Poisson algebra $\bb{C}[N^{\oplus}]$ can be identified with a subalgebra of the module of log derivations  $\Theta(N^{\oplus})\coloneqq \C[N^{\oplus}]\otimes_{\bb{Z}} M$ as in \cite[\S 1.1]{GPS}.  Here, $z^n\otimes m$, typically denoted $z^n\partial_m$, is viewed as acting on $\C[N]$ via $z^{n'}\mapsto \langle n,m\rangle Z^{n+n'}\partial_m$.  The commutator of these derivations makes $\Theta(N^{\oplus})$ into a Lie algebra with bracket given by $$[z^{n_1}\partial_{m_1},z^{n_2}\partial_{m_2}]=z^{n_1+n_2}\partial_{\langle n_2,m_1\rangle m_2 - \langle n_1,m_2\rangle m_1}.$$
Let $\f{h}$ be the Lie subalgebra spanned by elements of the form $z^n\partial_m$ for $\langle n,m\rangle =0$.  Then $\bb{C}[N^{\oplus}]$ embeds into $\f{h}$ via $z^n \mapsto z^n\partial_{B(n,\cdot)}$.  Hence, instead of taking $\f{A}^{\scl}$ to be the universal enveloping algebra of $\bb{C}[N^{\oplus}]$, it is reasonable to take it to be the universal enveloping algebra of $\f{h}$ or $\Theta(N^{\oplus})$.  The latter is simply a log version of the Weyl algebra in $\rank(N)$ variables.  That is, we may view $\f{A}^{\scl}$ as an algebra of logarithmic differential forms.

We note that the usual classical multiplicities of tropical curves (as in correspondence theorems like those of \cite{NS}) can similarly be computed via iterated Lie brackets of polyvector fields, cf. \cite{MRudMult}.  Also, the quantum ribbon multiplicities computed using $\f{A}^{q}$ are related to certain counts of real curves, cf. \cite{Mikq}.
\end{eg}

For example, we can always take $\f{A}$ to be the universal enveloping algebra of $\f{g}$.  Alternatively, for $\f{g}^{\Hall}$ or $\f{g}^q$, we can produce such an $\f{A}$ using Remark \ref{PvC}.

Suppose that $\Gamma$ is equipped with a ribbon structure $\wh{\Gamma}$.  At each vertex $V\neq V_{\infty}$, let $E_1,E_2$ be the vertices flowing into $V$ and $E_3$ the vertex flowing out of $V$, and assume the cyclic ordering of the labelling $E_1,E_2,E_3$ agrees with the ribbon structure of $\wh{\Gamma}$ at $V$ (otherwise we re-label).  We say that the vertex $V\in \wh{\Gamma}^{[0]}$ is \textbf{positive} if the edges $E_1,E_2$, labelled in this way with respect to the ribbon structure, satisfy the condition \eqref{posV}.  Otherwise, we say $V$ is \textbf{negative}.

We now describe a method of inductively associating an element of $\f{g}_{n_E}\subset \f{A}$ to each edge $E$ of $\wh{\Gamma}$, this time denoting the elements by $g^{\rib}_{E}$. The vectors $n_E$ will be the same as before, but the elements $g^{\rib}_E$ will be different and will depend on the ribbon structure.  As before, we take $g_{E_{ij}}^{\rib}\coloneqq g_{ij}$ for the source edges.  But now, for $E_1,E_2$ the edges flowing into a vertex $V$, $E_3$ the edge flowing out of $V$, and the labelling agreeing with the ribbon structure at $V$, we define
\begin{align*}
    g^{\rib}_{E_3}\coloneqq \nu(V)g^{\rib}_{E_1}g^{\rib}_{E_2},
\end{align*}
where 
\begin{align*}
    \nu(V)\coloneqq \begin{cases} 1 &\mbox{if $V$ is positive}\\
    -1 &\mbox{if $V$ is negative,}\\
    \end{cases}
\end{align*}
and $g^{\rib}_{E_1}g^{\rib}_{E_2}$ is the associative product in $\f{A}$.  Finally, we define
\begin{align}\label{Multrib}
    \Mult^{\rib}(\wh{\Gamma})\coloneqq g^{\rib}_{E_{\infty}} \in \f{A}_{n_{\ww}}.
\end{align}

Alternatively, define
\begin{align*}
    \nu(\wh{\Gamma})\coloneqq \prod_{V\in \wh{\Gamma}} \nu(V).
\end{align*}
The ribbon structure induces an ordering of the unbounded edges of $\wh{\Gamma}$, starting with $E_{\infty}$ and then continuing with $E_{i_1j_1},\ldots,E_{i_{l(\ww)}j_{l(\ww)}}$.  Using the associativity of $\f{A}$, we can rewrite \eqref{Multrib} as
\begin{align}\label{MultribProd}
    \Mult^{\rib}(\wh{\Gamma}) = \nu(\wh{\Gamma})  g_{i_1j_1}g_{i_2j_2}\cdots g_{i_{l(\ww)}j_{l(\ww)}} \in \f{A}.
\end{align}

One easily sees the following: 

\begin{lem}\label{CurveRibLem}
For each $\Gamma$ as above,
\begin{align*}
    \Mult(\Gamma) = \sum \Mult^{\rib}(\wh{\Gamma}),
\end{align*}
where the sum is over all possible tropical ribbons $\wh{\Gamma}$ with underlying tropical curve $\Gamma$.
\end{lem}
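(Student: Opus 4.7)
The plan is to argue by induction on the number of non-$V_{\infty}$ vertices of $\Gamma$. The base case is a single source edge $E_{ij}$ carrying a unique trivial ribbon structure, where both $\Mult(\Gamma)=g_{E_{\infty}}=g_{ij}$ and $\Mult^{\rib}(\wh{\Gamma})=g^{\rib}_{E_{\infty}}=g_{ij}$ are immediate from the definitions.

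For the inductive step, let $V$ be the unique non-$V_{\infty}$ vertex of $\Gamma$ lying on $E_{\infty}$, let $E_1,E_2$ be the two edges of $\Gamma$ flowing into $V$, and let $\Gamma_1,\Gamma_2$ denote the maximal subtrees of $\Gamma$ flowing into $V$ through $E_1,E_2$ respectively (each $\Gamma_i$ being a strictly smaller tropical disk whose terminal edge plays the role of $E_{\infty}$). Choose the labelling so that $\{n_{E_1},n_{E_2}\}\geq 0$. A ribbon structure $\wh{\Gamma}$ on $\Gamma$ decomposes as the data of a pair of ribbon structures $(\wh{\Gamma_1},\wh{\Gamma_2})$ together with one of the two cyclic orderings at $V$.

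In the non-degenerate situation $\{n_{E_1},n_{E_2}\}>0$, the cyclic ordering $(E_1,E_2,E_{\infty})$ at $V$ makes $V$ positive while the reverse ordering $(E_2,E_1,E_{\infty})$ makes $V$ negative. Summing only over the two local orderings at $V$ therefore produces
\begin{align*}
g^{\rib}_{E_1}(\wh{\Gamma_1})\,g^{\rib}_{E_2}(\wh{\Gamma_2}) - g^{\rib}_{E_2}(\wh{\Gamma_2})\,g^{\rib}_{E_1}(\wh{\Gamma_1}),
\end{align*}
which is the commutator $[g^{\rib}_{E_1}(\wh{\Gamma_1}),g^{\rib}_{E_2}(\wh{\Gamma_2})]_{\f{A}}$. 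Now summing over $\wh{\Gamma_1},\wh{\Gamma_2}$ and invoking bilinearity of the commutator together with the inductive hypothesis applied to the smaller disks $\Gamma_1,\Gamma_2$ yields $[g_{E_1},g_{E_2}]_{\f{A}}$. Since $\f{g}\subset\f{A}$ sits as a Lie subalgebra of the commutator algebra, this equals $[g_{E_1},g_{E_2}]_{\f{g}}=g_{E_{\infty}}=\Mult(\Gamma)$, completing the step.

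The main technical point to get right is the sign bookkeeping at $V$: one must verify that the convention from \S\ref{MultSect} --- that $\nu(V)=-1$ precisely when the ribbon cyclic order disagrees with the orientation induced by $\?B$ on $p^{*}(N)$ --- is exactly what produces the antisymmetrization of $g^{\rib}_{E_1}g^{\rib}_{E_2}$ under reversal of cyclic order. The possibly subtle degenerate case $\{n_{E_1},n_{E_2}\}=0$ requires separate handling: for a trivalent tropical disk this forces $h(E_1),h(E_2),h(E_{\infty})$ to be collinear, and one must appeal either to skew-symmetry of $\f{g}$ (which would force $[g_{E_1},g_{E_2}]=0$) or to the hypothesis that the source generators $g_{i,w}$ pairwise commute, propagated inductively, to confirm the two sides still agree.
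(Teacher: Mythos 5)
Your induction is the natural argument and is correct in the generic (non-degenerate) case; the paper gives no proof, simply asserting ``one easily sees.'' The decomposition of a ribbon structure on $\Gamma$ into ribbon structures on the two subtrees $\Gamma_1,\Gamma_2$ together with a choice of cyclic order at the vertex $V$ adjacent to $E_\infty$ is exactly the right bookkeeping, and the anti-symmetrization by $\nu(V)$ under reversal of the cyclic order produces the commutator at $V$, after which bilinearity and the inductive hypothesis finish the step.

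Your flag on the degenerate case $\{n_{E_1},n_{E_2}\}=0$ is a good catch, but neither of the remedies you suggest actually works, and the situation is a bit worse than you indicate. If $\{n_{E_1},n_{E_2}\}=0$ then \emph{both} cyclic orders at $V$ make $V$ positive under \eqref{posV} (the condition is $\geq 0$), so the local ribbon sum is $g^{\rib}_{E_1}g^{\rib}_{E_2}+g^{\rib}_{E_2}g^{\rib}_{E_1}$ in $\f{A}$ with a plus sign, not the commutator. Skew-symmetry of $\f{g}$ only kills $[g_{E_1},g_{E_2}]$ in $\f{g}$, not the \emph{anticommutator} in $\f{A}$: e.g.\ in $\f{A}^q$ with $g_{E_i}=z^{n_i}$ and $B(n_1,n_2)=0$, the ribbon side gives $2z^{n_1+n_2}\neq 0$. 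Likewise ``$h(E_1),h(E_2),h(E_\infty)$ collinear'' need not hold: $\{n_{E_1},n_{E_2}\}=0$ forces $p^*(n_{E_1}),p^*(n_{E_2})\in n_{E_1}^\perp\cap n_{E_2}^\perp$, which is codimension $2$ but in general positive-dimensional, so the two directions need not be parallel. Note also that $\Mult(\Gamma)$ itself is only defined up to sign at such a vertex, since both labellings satisfy \eqref{posV} but $[g_{E_1},g_{E_2}]=-[g_{E_2},g_{E_1}]$. The honest resolution is that the lemma implicitly assumes no vertex with $\{n_{E_1},n_{E_2}\}=0$, and this is ensured for the disks actually counted: in the codimension computation used to prove Lemma~\ref{triv}, the outgoing direction $v_{E_V}$ at a degenerate vertex is parallel to $\bigcap_i A_{E_i}$, so the codimension of $A_{E_V}$ fails to drop by $1$, forcing $h(V_\infty)$ into a codimension-$\geq 2$ locus that generic $\stab$ avoids. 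So the correct way to close the gap is to invoke the genericity of $\A_\ww$ and of $\stab$, not an algebraic property of $\f{g}$.
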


Note that $\Mult(\Gamma)$ and $\Mult^{\rib}(\wh{\Gamma})$ are completely determined by the type $\tau$ of $\Gamma$ or $\wh{\Gamma}$, respectively.  We thus define the \textbf{multiplicity of a tropical disk or ribbon type} $\tau$ as the multiplicity of any of the tropical disks/ribbons of type $\tau$.

\subsection{Tropical ribbon counts and the consistent scattering diagram}\label{ScatFromRib}

We continue with the setup of \S \ref{ScatTrop}.  For each weight vector $\ww$, each $\delta>0$, and each $\stab\in L_{\bb{R}}$, let $\f{T}_{\ww,\delta}(\stab)$ denote the set of types of tropical disks of degree $\Delta_{\ww}$ which match the constraint $\delta\A_{\ww}$ and for which $h(V_{\infty})=\stab$.  For each $\epsilon>0$ and $\stab\in L_{\bb{R}}$, let $B_{\epsilon}(\stab)$ denote the open radius $\epsilon$ ball centered at $\stab$ (with respect to the Euclidean metric associated to any fixed choice of basis for $L$).  Let $\f{T}_{\ww}(\stab)$ denote the set of tropical disk types\footnote{We believe these can be interpreted as types of ``virtual tropical disks'' as defined in \cite[\S 5]{CPS}, so that the classical case of Theorem \ref{HallScatThm} can be viewed as a special case of \cite[Prop. 5.14]{CPS}.} $\tau$ such that, for any $\epsilon>0$ and all sufficiently small $\delta>0$, there exist $\stab'\in B_{\epsilon}(\stab)$ with $\tau \in \f{T}_{\ww,\delta}(\stab')$.  See Figure \ref{disk} for an example.

\begin{lem}\label{triv}
Recall our assumption that $\A_{\ww}$ is generic.  
For $\stab$ outside some locus of codimension $2$ (in particular, for $\stab$ general in the sense of \S \ref{Scattering}), every tropical disk type in $\f{T}_{\ww}(\stab)$ is trivalent.
\end{lem}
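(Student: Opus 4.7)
The plan is a dimension count on the moduli of tropical disks of a fixed combinatorial type, combined with the scaling behavior $\delta \A_{\ww}$ as $\delta \to 0$. The starting point is the combinatorial fact that, for a tropical disk type $\tau$ with $e_\infty = l(\ww)$ unbounded edges and distinguished $1$-valent vertex $V_\infty$, the tree Euler characteristic plus the valence sum yields
\begin{align*}
|V_3(\tau)| \;=\; e_\infty - 1 - k(\tau),
\end{align*}
where $V_3(\tau)$ is the set of non-$V_\infty$ vertices of $\tau$ and $k(\tau) = \sum_{V \in V_3(\tau)}(\val(V) - 3) \geq 0$ is the total excess valence. In particular, $\tau$ is trivalent iff $k(\tau) = 0$, and every non-trivalent type has $k(\tau) \geq 1$.

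Next I would parameterize the moduli space $\mathrm{Mod}_\tau(\delta)$ of type-$\tau$ tropical disks matching $\delta \A_{\ww}$ by the position $h(V_\infty) \in L_{\bb{R}}$ together with the lengths of the $|V_3(\tau)|$ bounded edges; all other vertex positions are determined by the prescribed slopes. Because $v_i = p^*(e_i) \in e_i^\perp$ by skew-symmetry of $B$, the direction of each unbounded edge $E_{ij}$ is parallel to $\delta A_{ij}$, so the constraint $h(E_{ij}) \subset \delta A_{ij}$ reduces to a single affine-linear equation on the finite endpoint of $E_{ij}$. Under the genericity of $\A_{\ww}$, the resulting $e_\infty$ equations are linearly independent, hence
\begin{align*}
\dim \mathrm{Mod}_\tau(\delta) \;=\; \dim L + |V_3(\tau)| - e_\infty \;=\; \dim L - 1 - k(\tau).
\end{align*}

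The scaling $(h,\ell) \mapsto (\delta h, \delta \ell)$ identifies $\mathrm{Mod}_\tau(\delta)$ with $\delta \cdot \mathrm{Mod}_\tau(1)$, so the evaluation image $J_\tau(\delta) \coloneqq h(V_\infty)(\mathrm{Mod}_\tau(\delta)) \subset L_{\bb{R}}$ satisfies $J_\tau(\delta) = \delta \cdot J_\tau(1)$. Unpacking the definition of $\f{T}_{\ww}(\stab)$, $\tau \in \f{T}_{\ww}(\stab)$ iff $B_\epsilon(\stab) \cap \delta \cdot J_\tau(1) \neq \emptyset$ for every $\epsilon > 0$ and all sufficiently small $\delta$, which is equivalent to $\stab$ lying in the recession cone of the rational polyhedral set $J_\tau(1)$. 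This recession cone has dimension at most $\dim J_\tau(1) \leq \dim L - 1 - k(\tau)$.

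For non-trivalent $\tau$, the recession cone therefore has codimension $\geq 2$ in $L_{\bb{R}}$. Since only finitely many tree types of degree $\Delta_{\ww}$ exist, the finite union over all non-trivalent types of these recession cones produces the desired codimension-$2$ bad locus $Z$. For the ``in particular'' assertion, note that the defining equations of $J_\tau(1)$ have the form $\langle \cdot, e_i \rangle = \mathrm{const}$, inherited from the constraint hyperplanes $A_{ij}$ after eliminating the bounded-edge length parameters; hence the recession cone of $J_\tau(1)$ for non-trivalent $\tau$ is contained in an intersection of at least two distinct hyperplanes of the form $n^\perp$ with $n \in N \setminus \{0\}$, so $Z$ lies inside the non-general locus of \S \ref{Scattering}. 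The main subtlety I expect is verifying the independence of the $e_\infty$ constraints under the genericity hypothesis, especially when $\tau$ has several unbounded edges $E_{ij}$ sharing the same direction $v_i$; this requires some bookkeeping on how the constraints propagate through the tree.
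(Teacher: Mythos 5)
Your argument is correct in essence and reaches the same dimension estimate as the paper, but it is organized differently. The paper's proof flows along the tree toward $V_\infty$, tracking for each edge $E$ the affine subspace $A_E$ forced to contain $h(E)$: at a vertex $V$ with incoming edges $E_1,\ldots,E_s$, one has $A_{E_V}=(\bigcap_i A_{E_i})+\bb{R}\,v_{E_V}$ and hence $\codim(A_{E_V})\geq\sum_i\codim(A_{E_i})-1$, so excess valence propagates directly into the codimension of $A_{E_\infty}$, giving codimension $\geq 2$ for non-trivalent types. You instead parameterize the full moduli space $\mathrm{Mod}_\tau(\delta)$ globally by $(h(V_\infty),\ell)$, bound its dimension by a constraint count, and translate the definition of $\f{T}_{\ww}(\stab)$ into the statement that $\stab$ lies in (the closure of) the recession cone of $J_\tau(1)$. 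The two are really the same linear algebra, packaged locally versus globally; the paper's version has the mild advantage of exhibiting the bad locus directly as a generic translate of a rational codimension-$\geq 2$ subspace without needing the recession-cone limit, which makes both the codimension-$2$ claim and the ``in particular'' assertion (that $Z$ lies in the non-general locus, cut out by pairs of rational hyperplanes $n^\perp$) immediate.

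The one place where your write-up is imprecise is the claim that genericity of $\A_{\ww}$ makes the $e_\infty$ affine constraints \emph{linearly independent}. That does not quite follow: the linear parts of the constraints are fixed by the combinatorial type and the choice of the $e_i$, and genericity only moves the translation constants $c_{ij}$. What genericity actually buys is a dichotomy: either the linear parts are independent and the constraints cut down by exactly $e_\infty$, or the linear parts satisfy a relation $\sum\alpha_{ij}\ell_{ij}=0$, in which case a generic choice of the $c_{ij}$ violates $\sum\alpha_{ij}c_{ij}=0$ and $\mathrm{Mod}_\tau(\delta)=\emptyset$. Since your argument only uses an upper bound $\dim J_\tau(\delta)\leq\dim L-1-k(\tau)$, this dichotomy is exactly what you need, but you should phrase it that way rather than asserting independence. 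You flag this as the main subtlety, so this is a fixable imprecision rather than a gap.
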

\begin{proof}
This follows from the correspondence between tropical disks and scattering walls in Lemma \ref{TropDkinfty} below.  More explicitly, 
consider a tropical disk $\Gamma$ of degree $\Delta_{\ww}$ matching the constraints $\delta\A_{\ww}$ for some $\delta>0$.  Consider the flow of $\Gamma$ towards $V_{\infty}$ as in \S \ref{MultSect}.  Suppose $E_1,\ldots,E_s$ flow into a vertex $V$ with $E_V$ flowing out, and suppose that $E_i$ lies in a generically translated affine hyperspace $A_{E_i}$ for $i=1,\ldots,s$.  Then $E_V$ lies in 
\begin{align*}
    A_{E_V}:=\left(\bigcap_i A_{E_i}\right) + \bb{R} v_{E_i},
\end{align*}
where $v_{E_V}=p^*(n_{E_V})$ is the direction of $E_V$.  In particular, \begin{align*}
    \codim(A_{E_V})=\left(\sum_{i=1}^s \codim(A_{E_i})\right)-1.
\end{align*}
For each unbounded edge $E_{ij}$, we can take $A_{E_{ij}}=A_{ij}$, which has codimension $1$.  It follows that, if there is a vertex of valence higher than three, then $h(V_{\infty})$ will necessarily lie in a generically determined translate of some rational-slope subspace of codimension at least $2$.  The assumption on $\stab$ then implies that $h(V_{\infty})$ cannot be in $B_{\epsilon}(\stab)$, and the result follows.
\end{proof}

Lemma \ref{triv} ensures that we can define the multiplicities of elements of $\f{T}_{\ww}(\stab)$ as in \S \ref{MultSect} whenever $\stab$ is outside some bad codimension $2$ locus (which will be the joints of a scattering diagram).  Define
\begin{align*}
    N(\stab)\coloneqq \sum_{\ww} \frac{1}{|\Aut(\ww)|}\sum_{\tau\in \f{T}_{\ww}(\stab)} \Mult(\tau)\in \wh{\f{g}}.
\end{align*}
Let $\f{T}^{\rib}_{\ww}(\stab)$ denote the set of tropical ribbons types $\wh{\tau}$ such that the associated tropical disk type $\tau$ is in $\f{T}_{\ww}(\stab)$.   By Lemma \ref{CurveRibLem}, we can express $N(\stab)$ as
\begin{align*}
    N(\stab)=\sum_{\ww} \frac{1}{|\Aut(\ww)|}\sum_{\wh{\tau}\in \f{T}^{\rib}_{\ww}(\stab)} \Mult^{\rib}(\wh{\tau}).
\end{align*}

Note that for each $n\in \Lambda^+$, the strict convexity of $\Lambda^+$ ensures that there are only finitely many $\ww$ such that $n=n_{\ww}$.  Furthermore, for each $\ww$, there are clearly only finitely many types of tropical disks of degree $\Delta_{\ww}$.  The well-definedness of $N(\stab)$ follows, assuming that we have already fixed $\A_{\ww}$.  The fact that the generic choice of $\A_{\ww}$ does not matter is part of the following theorem.

\begin{thm}\label{TropicalScattering}
Assume $\f{g}$ has Abelian walls.  Let $\f{D}=\scat(\f{D}_{\In})$, and consider $\stab\in L_{\bb{R}}\setminus \Joints(\f{D})$.  Up to equivalence, we may assume that $\f{D}$ has at most one wall $(\f{d},g_{\f{d}})\in \f{D}$ with $\stab\in \f{d}$.  If there is no such wall, then $N(\stab)=0$, and otherwise,
\begin{align*}
    g_{\f{d}}=N(\stab).
\end{align*}
\end{thm}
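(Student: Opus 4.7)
The plan is to prove the identification via Theorem \ref{KSGS}: we build an explicit consistent scattering diagram $\f{D}'$ whose non-initial walls are outgoing and whose scattering function at each general $\stab$ is $N(\stab)$, then invoke uniqueness. Since we need only check everything modulo $\f{g}^{\geq k+1}$ for each $k$ and then take an inverse limit, it suffices to work with nilpotent truncations. For this it is useful to replace $\f{D}_{\In}$ by an equivalent, perturbed initial scattering diagram $\f{D}_{\In}^{\delta}$: using Example \ref{eq}(1), replace each incoming wall $(e_i^{\perp},g_i)$ with the decomposition $g_i=\sum_w g_{i,w}$ coming from \eqref{gi} by a family of walls supported on generic parallel translates $\delta A_{ij}$ of $e_i^{\perp}$, one for each index $j$ occurring in the weight vectors $\ww_i$. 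Because the components $g_{i,w}$ pairwise commute, $\f{D}_{\In}^{\delta}$ is equivalent to $\f{D}_{\In}$ away from the perturbation region, and for $k$ fixed only finitely many such translates matter.

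The core step is to identify the walls of $\f{D}_k^{\delta}\coloneqq \scat(\f{D}_{\In}^{\delta})$ modulo $\f{g}^{\geq k+1}$ with trivalent tropical disks decorated by the iterated bracket multiplicity of \S \ref{MultSect}. Walls in $\f{D}_k^{\delta}$ are produced inductively at joints using the standard local two-dimensional Kontsevich--Soibelman formula, which under the Abelian walls hypothesis reduces, to leading non-trivial order, to the commutator of the colliding scattering functions. Following the flow toward $V_\infty$, every non-initial wall $\f{d}$ in $\f{D}_k^{\delta}$ is therefore realized as the image of $E_\infty$ for a unique trivalent tropical disk $(\Gamma,h)$ of some degree $\Delta_{\ww}$ matching the constraint $\delta\A_{\ww}$, and its scattering function is
\begin{align*}
    g_{\f{d}} \equiv \Mult(\Gamma) \pmod{\f{g}^{\geq k+1}},
\end{align*}
where the sign convention \eqref{WallCross} combined with the relabelling requirement \eqref{posV} at each joint matches the bracket order used to define $\Mult$. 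One must check that the generic-position property of $\A_{\ww}$ together with Lemma \ref{triv} guarantees that trivalent disks are the only ones contributing and that joints of higher multiplicity do not appear for generic $\delta$.

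The factor $1/|\Aut(\ww)|$ appears naturally because the perturbed diagram uses an explicit indexing of the source walls, while the initial function $g_i$ is symmetric in this indexing; a disk type $\tau\in \f{T}_{\ww,\delta}(\stab)$ is thus counted $|\Aut(\ww)|$ times when we sum wall contributions without quotienting. Given a general $\stab\notin \Joints(\f{D})$ and $\epsilon>0$ small, for all sufficiently small $\delta>0$ the combinatorial types of walls of $\f{D}_k^{\delta}$ passing through $B_\epsilon(\stab)$ stabilize to the set $\f{T}_\ww(\stab)$ by definition of the latter, so the scattering function at $\stab$ equals $N(\stab)$ modulo $\f{g}^{\geq k+1}$; if no wall passes through $\stab$ then the same argument shows $\f{T}_\ww(\stab)=\emptyset$ for all $\ww$ and hence $N(\stab)=0$. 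Taking the limit $\delta\to 0$ and then $k\to\infty$ produces a consistent scattering diagram containing $\f{D}_{\In}$ with only outgoing non-initial walls, and the uniqueness clause of Theorem \ref{KSGS} identifies it with $\scat(\f{D}_{\In})=\f{D}$.

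The main obstacle I expect is the bookkeeping at the joint-by-joint level: verifying that the local Kontsevich--Soibelman formula at a collision of two walls whose scattering functions lie in $\f{g}^{\geq k_1}$ and $\f{g}^{\geq k_2}$ produces, modulo $\f{g}^{\geq k_1+k_2+1}$, exactly the commutator required to match the recursive definition of $\Mult$, with the correct sign coming from \eqref{WallCross} and the edge-ordering in \eqref{posV}. Secondarily, one must ensure that the perturbations are small enough that, for each $k$, no spurious joints arise in $B_\epsilon(\stab)$ from walls of degree higher than $k$, so the stabilization of wall-types is compatible with the inverse-limit procedure; this is where the Abelian walls hypothesis is actually used, since it ensures that parallel walls can be freely combined and decomposed without affecting the scattering output.
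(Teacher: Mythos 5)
Your high-level strategy matches the paper's: perturb the initial walls, identify the resulting scattered walls with trivalent tropical disks whose multiplicity is the iterated bracket, compensate by $1/|\Aut(\ww)|$, take limits, and invoke uniqueness from Theorem \ref{KSGS}. However, there is a genuine gap in the central step — the decomposition of the incoming walls — and the perturbation as you describe it would not produce the right count.

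You propose replacing each incoming wall $(e_i^{\perp},g_i)$ by a family of walls on generic translates, ``one for each index $j$ occurring in the weight vectors $\ww_i$.'' This is ill-posed: $\ww$ is the variable being summed over in $N(\stab)$, so there is no fixed set of indices $j$ to put walls on; the perturbation must be uniform in $\ww$. Even for a single fixed $\ww$ the resulting decomposition $g_i = \sum_j g_{i,w_{ij}}$ would be wrong whenever two components share a weight — e.g.\ for $\ww_i = (1,1)$ it yields $2g_{i,1}$ rather than $g_{i,1}$. The difficulty this is hiding is that a tropical disk of degree $\Delta_\ww$ may have several unbounded edges $E_{ij}$ emanating from the same initial hyperplane $e_i^{\perp}$, possibly with the same weight, and these must correspond to \emph{distinct} perturbed walls; meanwhile the $w_{ij}!$ factors and the $1/|\Aut(\ww)|$ divisor have to emerge from the combinatorics, not be imposed by hand. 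The paper resolves this by introducing a formal nilpotent ring $\wt{T}_k$ with variables $u_{iJ}$ indexed by subsets $J\subset\{1,\ldots,k\}$, decomposing each $g_{i,w}$ into $\binom{k}{w}$ walls carrying scattering function $w!\,g_{i,w}u_{iJ}$ (equation \eqref{PertIn}), and then recombining via the identity \eqref{tww_sigma}, $t^{\ww} = \sum_{\JJ\in\JJ_\ww}\?{u}_{\JJ}\prod_{i,j}w_{ij}!$. The nilpotency $u_{iJ_1}u_{iJ_2}=0$ for $J_1\cap J_2\neq\emptyset$ is precisely what prevents a single initial wall from being ``used'' twice, and the $\Aut(\ww)$ factor appears because distinct $\JJ\in\JJ_\ww$ related by $\Aut(\ww)$ label the same configuration $\f{D}^0_{k,\JJ}$ of perturbed initial walls, hence the same scattered walls. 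Without this bookkeeping the factorials and the symmetry factor do not come out, so your sketch cannot be completed as stated.

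One further imprecision: you locate the use of the Abelian walls hypothesis in ``freely combining and decomposing parallel walls,'' but in the paper its essential role is in establishing that the inductively produced diagram $\f{D}_k^{\infty}$ (built by adding one wall at each two-wall collision via \eqref{Parents}) is actually \emph{consistent}; that consistency argument, borrowed from \cite[\S 3.2.3]{Man3}, breaks down when walls in a given $\f{g}_n^{\parallel}$ fail to commute.
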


\subsection{Proof of Theorem \ref{TropicalScattering}}\label{TropScatProof}
Theorem \ref{TropicalScattering} is a modified version of \cite[Thm 3.7]{Man3}, or a refinement of some cases of \cite[Prop. 5.14]{CPS}.  The two-dimensional quantum version is \cite[Cor. 4.9]{FS}, and the two-dimensional classical version is \cite[Thm. 2.8]{GPS}.  The proof is similar in each case.  We repeat the setup here, following \cite[\S 3.2]{Man3}.

\subsubsection{Perturbing the scattering diagram}

\begin{dfn}\label{as}
For any scattering diagram $\f{D}$ over a Lie algebra $\f{g}$ with Abelian walls, the \textbf{asymptotic scattering diagram} $\f{D}_{\as}$ of $\f{D}$ is defined by replacing every wall $(n+\f{d},g_{\f{d}})\in \f{D}$ with the wall $(\f{d},g_{\f{d}})$.  Here, $\f{d}$ denotes a rational polyhedral cone (with apex at the origin) and $n \in N_{\bb{R}}$ translates this cone.
\end{dfn}

Now let $T$ denote the commutative polynomial ring $\bb{Z}[t_i|i\in I]$, and let $T_k\coloneqq  T/ \langle t_i^{k+1}|i\in I\rangle$.   
 Let $\f{D}_{\In,T_k}$ and $\f{D}_{\In,T}$ be the initial scattering diagrams over $\f{g} \otimes T_k$ and $\f{g}\otimes T$, respectively, given by replacing each $g_{\f{d}_i}=\sum_{j\geq 1} g_{ij}$ from $\f{D}_{\In}$ with $g'_{\f{d}_i}\coloneqq \sum_{j\geq 1} t_i^jg_{ij}$.   We will show that Theorem \ref{TropicalScattering} holds for $\f{D}_{T_k}\coloneqq \scat(\f{D}_{\In,T_k})$ for all $k$, hence for $\f{D}_T\coloneqq \scat(\f{D}_{\In,T})$.  Taking $t_i= 1$ for each $i$ then recovers the theorem for $\f{D}=\scat(\f{D}_{\In})$.
 
We have an inclusion of commutative rings
\begin{align*}
T_k&\hookrightarrow T'_k\coloneqq \bb{Z}[\?{u}_{ij}|i\in I, 1\leq j \leq k]/\langle \?{u}_{ij}^2|i\in I, 1\leq j \leq k\rangle\\
         t_i&\mapsto \sum_{j=1}^k \?{u}_{ij}.
\end{align*}
Using this inclusion to work in $\f{g}\otimes T'_k$, we have 
\begin{align}\label{Rwdiq}
g'_{\f{d}_i}= \sum_{w= 1}^k t_i^w g_{iw} = \sum_{w=1}^k \sum_{\#J=w} w! g_{iw} \?{u}_{iJ},
\end{align}
where the second sum is over all subsets $J\subset \{1,\ldots,k\}$ of size $w$, and
\begin{align*}
\?{u}_{iJ}\coloneqq \prod_{j\in J} \?{u}_{ij}.
\end{align*}

Consider our scattering diagram $\f{D}_{\In}=\{(\f{d}_i,g_i)|i\in I\}$ with $\f{d}_i=e_i^{\perp}$ and $g_i=\sum_{w\geq 1} g_{i,w}$ as in \eqref{gi}.  Applying the equivalence from Example \ref{eq}(1) in reverse and then perturbing the walls (i.e., translating the walls by some generic amount), we obtain a scattering diagram
\begin{align}\label{PertIn0}
\?{\f{D}}_k^0\coloneqq  \{(\f{d}_{iJ},w! g_{iw} \?{u}_{iJ}) | 1\leq w \leq k, J\subset \{1,\ldots,k\}, \#J=w\},
\end{align}
where $\f{d}_{iJ}$ is some generic translation of $\f{d}_i=e_i^{\perp}$.  Note that $\scat(\?{\f{D}}_k^0)_{\as} = \f{D}_{\In,T_k}$.

It will be useful for us to refine this setup a bit, working over a different commutative ring $\wt{T}_k$ defined by
\begin{align*}
    \wt{T}_k\coloneqq \bb{Z}[u_{iJ}|i\in I, J\subset \{1,\ldots,k\}]/\langle u_{iJ_1}u_{iJ_2}|J_1 \cap J_2 \neq \emptyset \rangle.
\end{align*}
Note that we have a surjective homomorphisms 
\begin{align}\label{pi}
  \wt{\pi}:\wt{T}_k \rar T'_k, \hspace{.25 in}  u_{iJ} \mapsto \?{u}_{iJ}.
\end{align}
Let $\f{D}_k^0$ denote the initial scattering diagram over $\f{g}\otimes \wt{T}_k$ defined as in \eqref{PertIn0}, but with the factors $\?{u}_{iJ}$ replaced by $u_{iJ}$, i.e.,
\begin{align}\label{PertIn}
\f{D}_k^0\coloneqq  \{(\f{d}_{iJ},w! g_{iw} u_{iJ}) | 1\leq w \leq k, J\subset \{1,\ldots,k\}, \#J=w\}.
\end{align}

\begin{eg} \label{eg:deform}
For $\f{D}_{\In}$ as in Example \ref{eg:initial}, the corresponding $\f{D}_k^0$ for $k=2$ may look like Figure \ref{deform}.

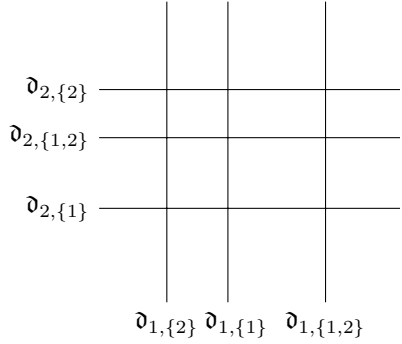
\begin{figure}[htb]
\centering
\begin{tikzpicture}
\draw
(-2,.83)node[left]{$\f{d}_{2,\{2\}}$} -- (2,.83) 
(-2,-.75)node[left]{$\f{d}_{2,\{1\}}$} -- (2,-.75) 
(-2,.19)node[left]{$\f{d}_{2,\{1,2\}}$} -- (2,.19) 
(-1.1,-2)node[below]{$\f{d}_{1,\{2\}}$} -- (-1.1,2)
(-.288,-2)node[below]{~$~\f{d}_{1,\{1\}}$} -- (-.288,2)
(1.01,-2)node[below]{$\f{d}_{1,\{1,2\}}$} -- (1.01,2);
\end{tikzpicture}
\caption{$\f{D}_2^0$ perturbing the $A_2$ initial scattering diagram of Figure \ref{initial}.} \label{deform}
\end{figure}
\end{eg}

Note that $\wt{\pi}$ takes the scattering functions of $\f{D}_k^0$ to those of $\?{\f{D}}_k^0$, and so the same will be true for the corresponding consistent scattering diagrams and their asymptotic versions.  We will write our walls in the form $(\f{d},g_{\f{d}}u_{\JJ_{\f{d}}})$, where $g_{\f{d}} \in \f{g}_{n_{\f{d}}}$ for some $n_{\f{d}}\in N^+$, $\JJ_{\f{d}}$ is a collection of pairwise-disjoint subsets of $I\times \{1,\ldots,k\}$ of the form $(i,J)$ for various $i\in I$ and $J\subset \{1,\ldots,k\}$, and
\begin{align}\label{uJ}
u_{\JJ_{\f{d}}}\coloneqq \prod_{(i,J)\in \JJ_{\f{d}}} u_{iJ}.
\end{align}

We now inductively produce a scattering diagram $\f{D}_k^{\infty}= \scat(\f{D}_k^0)$ from $\f{D}_k^0$ as follows: whenever two walls $(\f{d}_1,g_{\f{d}_1}u_{\JJ_{\f{d}_1}})$ and $(\f{d}_2,g_{\f{d}_2}u_{\JJ_{\f{d}_2}})$ intersect and satisfy $u_{\JJ_{\f{d}_1}}u_{\JJ_{\f{d}_2}}\neq 0$, we add a new wall $\f{d}(\f{d}_1,\f{d}_2)$ defined as follows:  assume $\{n_{\f{d}_1},n_{\f{d}_2}\}\geq 0$ (otherwise reorder), and then set
\begin{align}\label{Parents}
\f{d}(\f{d}_1,\f{d}_2)\coloneqq ((\f{d}_1\cap\f{d}_2)+\bb{R}_{\leq 0} p^*(n_{\f{d}_1}+n_{\f{d}_2}), [g_{\f{d}_1},g_{\f{d}_2}]u_{\JJ_{\f{d}_1}}u_{\JJ_{\f{d}_2}}).
\end{align}
This indeed terminates in finitely many steps and produces a consistent scattering diagram $\f{D}_k^{\infty}$, cf. \cite[\S 3.2.2 -- \S 3.2.3]{Man3} for details. We note that the consistency argument in \cite[\S 3.2.3]{Man3} requires the Abelian walls assumption.

\begin{dfn}
If $\f{d}=\f{d}(\f{d}_1,\f{d}_2)$, define $\Parents(\f{d})\coloneqq \{\f{d}_1,\f{d}_2\}$, and if $\f{d}\in \f{D}_k^0$, define $\Parents(\f{d})\coloneqq \emptyset$.  Recursively define $\Ancestors(\f{d})$ by $\Ancestors(\f{d})\coloneqq \{\f{d}\} \cup \bigcup_{\f{d}'\in \Parents(\f{d})} \Ancestors(\f{d}')$.  Define
\begin{align*}
\Leaves(\f{d})\coloneqq \{\f{d}'\in \Ancestors(\f{d})| \quad \f{d}' \mbox{~is the support of a wall in~} \f{D}_k^0 \}.
\end{align*}
\end{dfn}

\subsubsection{The tropical description of $\f{D}_k^{\infty}$}\label{TropDkinfty}

We continue to write $\JJ$ to denote a collection of pairwise-disjoint subsets of $I\times \{1,\ldots,k\}$ of the form $(i,J)$ for various $i\in I$ and $J\subset \{1,\ldots,k\}$.  Now, as in \S \ref{DegConst}, fix a weight vector $\ww\coloneqq (\ww_i)_{i\in I}$, $\ww_i\coloneqq (w_{i1},\ldots,w_{il_i})$ with $0< w_{i1} \leq \ldots \leq w_{il_i}$.  Let $\JJ_{\ww}$ denote the set of all possible $\JJ$ which can be written in the form $$\JJ=\{(i,J_{ij}):i\in I,j=1,\ldots,l_i\}$$ with $\#J_{ij}=w_{ij}$. 
Note that each $\JJ\in \JJ_{\ww}$ corresponds to a set of walls 
$$\f{D}_{k,\JJ}^0=\{\f{d}_{iJ_{ij}}\}_{(i,J_{ij})\in \JJ} \subset \f{D}_k^0,$$ and two choices of $\JJ$ correspond to the same $\f{D}_{k,\JJ}^0$ exactly if they are related by an element of $\Aut(\ww)$.  Given $\JJ$, let $\ww_{\JJ}$ denote the corresponding weight vector $\ww$ for which $\JJ\in \JJ_{\ww}$.

Let $\f{D}_{k,\JJ}^{\infty}$ denote the set of walls in $\f{D}_k^{\infty}$ whose leaves are precisely the walls of $\f{D}_{k,\JJ}^0$.  Note that, for  $\JJ\in \JJ_{\ww}$ and $(\f{d},g_{\f{d}}u_{\JJ}) \in \f{D}_{k,\JJ}^{\infty}$, we must have $g_{\f{d}} \in \f{g}_{n_{\ww}}$.  We will write $\f{T}_{\ww,\delta}(\stab,\A_{\JJ})$ to indicate $\f{T}_{\ww,\delta}(\stab)$ as in \S \ref{ScatFromRib} with the the representatives of the incidence conditions $\A_{\ww}$ chosen so that $A_{ij}=\f{d}_{iJ_{ij}}$.

\begin{lem}\label{ScatTropBij}
For every wall $(\f{d},g_{\f{d}}u_{\JJ})\in \f{D}_{k,\JJ}^{\infty}$ and every $\stab$ in the interior of $\f{d}$, there exists a unique tropical disk $h:\Gamma \rar L_{\bb{R}}$ in $\f{T}_{\ww_{\JJ},1}(\stab,\A_{\JJ})$ with $h(V_{\infty})=\stab$.  Furthermore, we have 
\begin{align}\label{BreakCoeff}
g_{\f{d}}=\Mult(\Gamma)\prod_{ij} (w_{ij}!).
\end{align}  
\end{lem}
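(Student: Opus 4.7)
I would prove both statements simultaneously by induction on the construction tree of $\f{d}$, equivalently on $|\JJ| = l(\ww_{\JJ})$. The key is that each application of the recursive rule \eqref{Parents} mirrors gluing two sub-disks at a new trivalent vertex.

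For the base case, $\f{d} \in \f{D}_k^0$, so $\f{d} = \f{d}_{iJ}$ for some $i \in I$ and $J \subset \{1,\ldots,k\}$ of size $w$, with $\JJ = \{(i,J)\}$, $\ww_{\JJ} = ((w))$, and $\prod_{ij} (w_{ij}!) = w!$. The only candidate tropical disk is a single edge $E_{iJ}$ emanating from $V_\infty = \stab$ back along direction $p^*(we_i)$; since $\f{d}_{iJ}$ is parallel to $e_i^\perp$ and passes through $\stab$, this edge sits inside $\f{d}_{iJ}$, so it lies in $\f{T}_{\ww_{\JJ},1}(\stab,\A_{\JJ})$. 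Its multiplicity is $g_{iw}$ by \eqref{gij}, and comparing with \eqref{PertIn} gives $g_{\f{d}} = w!\,g_{iw} = \Mult(\Gamma) \cdot w!$, as required.

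For the inductive step, suppose $\f{d} = \f{d}(\f{d}_1,\f{d}_2)$ with $\f{d}_i \in \f{D}_{k,\JJ_i}^{\infty}$ and $\JJ = \JJ_1 \sqcup \JJ_2$. Given $\stab$ in the interior of $\f{d} = (\f{d}_1 \cap \f{d}_2) + \bb{R}_{\leq 0}\, p^*(n_{\f{d}_1}+n_{\f{d}_2})$, I would trace $\stab$ backwards along $p^*(n_{\f{d}_1}+n_{\f{d}_2})$ to obtain a point $V$ in the interior of $\f{d}_1 \cap \f{d}_2$; the genericity of $\A_{\JJ}$ ensures $V$ is unique. Applying the inductive hypothesis to $(\f{d}_i, g_{\f{d}_i} u_{\JJ_i})$ with $V$ playing the role of the endpoint, I get unique tropical disks $\Gamma_i$ of degree $\Delta_{\ww_{\JJ_i}}$ matching $\A_{\JJ_i}$ with $h(V_\infty) = V$, satisfying $g_{\f{d}_i} = \Mult(\Gamma_i)\prod_{(p,q) \in \ww_{\JJ_i}} w_{pq}!$. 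Gluing $\Gamma_1$ and $\Gamma_2$ at $V$ (now promoted to a trivalent vertex) and attaching the new edge from $V$ to $\stab$ produces a tropical disk $\Gamma \in \f{T}_{\ww_{\JJ},1}(\stab,\A_{\JJ})$. Because the convention $\{n_{\f{d}_1},n_{\f{d}_2}\} \geq 0$ used in \eqref{Parents} agrees with the convention used in defining $\Mult$ at a trivalent vertex in \S \ref{MultSect}, we obtain
\begin{align*}
    g_{\f{d}} = [g_{\f{d}_1},g_{\f{d}_2}] = [\Mult(\Gamma_1),\Mult(\Gamma_2)]\prod_{(p,q) \in \JJ} w_{pq}! = \Mult(\Gamma)\prod_{ij} w_{ij}!,
\end{align*}
establishing \eqref{BreakCoeff}.

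For uniqueness, Lemma \ref{triv} forces every disk in $\f{T}_{\ww_{\JJ}}(\stab)$ to be trivalent, and the edge at $V_\infty$ must flow backwards from $\stab$ along the direction $p^*(n_{\f{d}_1}+n_{\f{d}_2})$ to reach a unique trivalent vertex by genericity of $\A_{\JJ}$; then the two sub-disks emanating from that vertex are unique by induction. The main obstacle will be bookkeeping: one must check that the partition $\JJ = \JJ_1 \sqcup \JJ_2$ is forced by the data, that two different parent pairs $(\f{d}_1,\f{d}_2)$ yielding the same support actually record different walls of $\f{D}_{k,\JJ}^{\infty}$ (so we do not overcount), and that the genericity of $\A_{\JJ}$ precludes collisions between distinct tropical disk types ending at the same $\stab$. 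Once these points are settled, the bijection between walls of $\f{D}_{k,\JJ}^{\infty}$ containing $\stab$ and types in $\f{T}_{\ww_{\JJ},1}(\stab,\A_{\JJ})$ is exactly the one encoded by the recursion \eqref{Parents}.
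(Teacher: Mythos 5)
Your proof is correct and takes essentially the same approach as the paper's: you construct the tropical disk by tracing back from $\stab$ to the intersection $\f{d}_1\cap\f{d}_2$ of the parent walls, branching there, and repeating, which is exactly the paper's recursive construction — you have merely packaged it as an explicit induction on $|\JJ|$. Your added remarks on uniqueness go slightly beyond what the paper writes out (the paper simply asserts the constructed disk is ``the desired tropical disk''); do note, though, that the direction of $E_\infty$ is pinned down by balancing at the other vertices (forcing $u_{E_\infty}$ to point along $-p^*(n_{\ww_{\JJ}})=-p^*(n_{\f{d}})$), a step worth stating explicitly, and that your invocation of Lemma \ref{triv} is safe only because the codimension-count argument given there is self-contained and does not depend on the present lemma.
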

\begin{proof}
We construct the tropical disk by starting at $h(V_{\infty})=\stab\in \f{d}$ and following $\f{d}$ in the direction $p^*(n_{\f{d}})$ until we reach a point $p\in \f{d}_1\cap \f{d}_2$, where $\{\f{d}_1,\f{d}_2\}=\Parents(\f{d})$.  The resulting segment is given weight $|n_{\f{d}}|$ (the index of $n_{\f{d}}$, i.e., $n_{\f{d}}$ equals $|n_{\f{d}}|>0$ times a primitive vector).  From $p$, extend the tropical curve in the directions $n_{\f{d}_1}$ and $n_{\f{d}_2}$ with weights $|n_{\f{d}_1}|$ and $|n_{\f{d}_2}|$, respectively, until reaching the boundaries of the walls $\f{d}_1$ and $\f{d}_2$.   The balancing condition at $p$ follows easily from \eqref{Parents} and the fact that commutators in $\f{g}$ respect the $N^+$-grading.  The process is repeated for each of these branches, and continues until every branch extends to infinity in some leaf.  This gives the desired tropical disk.  The formula for $g_{\f{d}}$ follows easily from \eqref{Parents} and the definition of $g_{\Gamma}$, noting that the $\prod w_{ij}!$ factor appears because of the fact that $g_{iw}$ is multiplied by $w!$ in the definition of $\f{D}_k^0$ in \eqref{PertIn}, and similarly for the $u_{\JJ}$ factor.
\end{proof}

\subsubsection{Proof of Theorem \ref{TropicalScattering}}

Given a weight vector $\ww$, let $|\ww_i|\coloneqq \sum_{j=1}^{l_i} w_{ij}$, and let $t^{\ww}=\prod_{i,j}t_i^{w_{ij}} = \prod_i t_i^{|\ww_i|}$.  Also, for $\JJ=\{(i,J_{ij})\subset I\times \{1,\ldots,k\}\}_{(i,j)\in S_{\ww}}$, let 
\begin{align*}
 \?{u}_{\JJ}\coloneqq \prod_{i,j} \?{u}_{iJ_{ij}}.
\end{align*}
We will use the following formula, cf. \cite[(45)]{Man3}:
\begin{align}\label{tww_sigma}
    t^{\ww} =  \sum_{\JJ\in \JJ_{\ww}}\left(\?{u}_{I_{\JJ}} \prod_{i,j} w_{ij}!\right).
\end{align}
For a scattering diagram $\f{D}$ and $\delta \in \bb{R}_{>0}$, let $\delta\f{D}$ denote the scattering diagram obtained by multiplying the supports of the walls of $\f{D}$ by $\delta$ (i.e., multiplying their distances from the origin by $0$). 

Now fix a point $\stab\in L_{\bb{R}}\setminus \Joints(\f{D}_{T_k})$.  Recall that $\f{D}_{T_k}=(\wt{\pi}(\f{D}_{k}^{\infty}))_{\as}$.   Hence, if $\stab \notin \supp(\f{D}_{T_k})$, then for sufficiently small $\delta>0$, no walls of $\delta\f{D}_{k}^{\infty}=\scat(\delta\f{D}_{k}^0)$ will intersect a small $\epsilon$-neighborhood of $\stab$.  So then by Lemma \ref{ScatTropBij}, no tropical disks representing a type in any $\f{T}_{\ww_{\JJ},1}(\stab,\A_{\JJ})$ will intersect such an $\epsilon$-neighborhood either, and so we obtain $N(\stab)=0$.

Now suppose $\stab \in \supp(\f{D}_k)$, and for convenience, use Example \ref{eq}(1) to combine all walls containing $\stab$ into a single wall $\f{d}$.  Then since $\f{D}_{T_k}=(\wt{\pi}(\f{D}_{k}^{\infty}))_{\as}$, we know that $g_{\f{d}}=\sum \wt{\pi}(g_{\JJ}u_{\JJ})$, where the sum is over all walls $(\f{d}_{\JJ},g_{\JJ}u_{\JJ})\in \f{D}_{k}^{\infty}$ such that for any $\epsilon>0$, there exists a $\delta>0$ for which $\delta \f{d}_{\JJ}$ intersects $B_{\epsilon}(\stab)$.  By Lemma \ref{ScatTropBij}, this is the same as 
\begin{align*}
    \wt{\pi}\left(\sum_{\ww} \frac{1}{|\Aut(\ww)|} \sum_{\JJ\in \JJ_{\ww}}\sum_{\tau\in \f{T}_{\ww}(\stab,\A_{\JJ})} \Mult(\tau)u_{\JJ}w_{ij}!\right),
\end{align*}
where here we write $\f{T}_{\ww}(\stab,\A_{\JJ})$ to indicate $\f{T}_{\ww}(\stab)$ for our particular choice of $\A_{\ww}$ as $\A_{\JJ}$ (since a priori $\f{T}_{\ww}(\stab)$ might depend on this choice).  Here we use our observation that two choices of $\JJ$ correspond to the same $\f{D}_{k,\JJ}^0$, hence the same $\f{D}_{k,\JJ}^{\infty}$, if and only if they are related by an element of $\Aut(\ww)$.

Now, note that for each $\ww$, $(\f{D}_k^0)_{\as}$ is symmetric with respect to permuting the elements of $\JJ_{\ww}$, i.e., for $\JJ_1,\JJ_2\in \JJ_{\ww}$, swapping the supports of $\f{d}_{i\JJ_1}$ and $\f{d}_{i\JJ_2}$ in \eqref{PertIn} does not affect $(\f{D}_k^0)_{\as}$. Hence,  $\sum_{\tau \in \f{T}_{\ww}(\stab,\A_{\JJ})} \Mult(\tau)$ is independent of $\JJ\in \JJ_{\ww}$, and so we obtain
\begin{align*}
    g_{\f{d}}=   \sum_{\ww} \frac{1}{|\Aut(\ww)|} \sum_{\tau\in \f{T}_{\ww}(\stab)} \Mult(\tau)\left(\sum_{\JJ\in \JJ_{\ww}}\?{u}_{\JJ}w_{ij}!\right).
\end{align*}
Finally, applying \eqref{tww_sigma} yields the desired result. \qed

\subsection{The Main Theorem}\label{MainThmSect}

We cannot apply Theorem \ref{TropicalScattering} directly to $\f{D}_{\scat}^{\Hall}$ because $\f{g}^{\Hall}$ is not skew-symmetric.  However, the theorem does apply to any of our $\f{D}_{\scat}^{\f{i}}\coloneqq \s{I}^{\f{i}}(\f{D}_{\scat}^{\Hall})$ for $\f{i}\supseteq \f{i}^{\Skew}$ as in \S \ref{HallScat}.  Here, we take the associative algebra $\f{A}^{\f{i}}$ to be as in Remark \ref{fA}, with $\f{A}^{\f{i}}$ meaning $\f{A}^{\scl}$ as Remark \ref{fA} in the case where $\f{i}=\ker(\s{I})$.

Given a weight vector $\ww$, define
\begin{align*}
    R_{\ww}\coloneqq \prod_{i,j} R_{w_{ij}},
\end{align*}
where we recall from \eqref{Rk} that $R_{k}\coloneqq \frac{(-1)^{k-1}}{k(q^k-1)}$.

Now, let us fix a quiver with potential $(Q,W)$ and consider the corresponding $\f{D}_{\scat}^{\Hall}$.  Consider a weight vector $\ww$ and a choice of $\wh{\tau}\in \f{T}_{\ww}^{\rib}(\stab)$.  Recall that the ribbon structure induces an ordering of the unbounded edges of $\wh{\tau}$, starting with $E_{\infty}$ and then continuing with $E_{i_1j_1},\ldots,E_{i_{l(\ww)}j_{l(\ww)}}$. Using \eqref{MultribProd} and \eqref{logfi}, we have
\begin{align*}
    \Mult^{\rib}(\wh{\tau})=\nu(\wh{\tau})R_{\ww} \kappa_{i_1}^{w_{i_1j_1}}\cdots \kappa_{i_{l(\ww)}}^{w_{i_{l(\ww)}j_{l(\ww)}}}.
\end{align*}
By Lemma \ref{CompSeriesLem}, we have
\begin{align}\label{kappaFlag}
    \kappa_{i_1}^{w_{i_1j_1}}\cdots \kappa_{i_{l(\ww)}}^{w_{i_{l(\ww)}j_{l(\ww)}}} = \f{Flag}(w_{i_1j_1}S_1,\ldots,w_{i_{l(\ww)}j_{l(\ww)}}S_{i_{l(\ww)}})=:\f{Flag}(\wh{\tau}),
\end{align}
where we write $w_{i_kj_k}S_{i_k}$ to indicate that the entry $S_{i_k}$ appears $w_{i_kj_k}$ times, and we neglect writing the data of the map to $\s{M}_{n_{\ww}}\subset \s{M}$.
 Finally, applying Theorem \ref{TropicalScattering} to the image under $\s{I}^{\f{i}}$, we obtain:
\begin{thm}\label{HallScatThm}
Let $\f{D}=\f{D}_{\scat}^{\f{i}}$ for $\f{i}\supset \f{i}^{\Skew}$, and consider $\stab\in L_{\bb{R}}\setminus \Joints(\f{D})$.  Up to equivalence, we may assume that $\f{D}$ has at most one wall $(\f{d},g_{\f{d}})\in \f{D}$ with $\stab\in \f{d}$.  If there is no such wall, then $N(\stab)=0$, and otherwise, $g_{\f{d}}=N(\stab)$, where $N(\stab)$ is defined as
\begin{align*}
 N(\stab)\coloneqq \sum_{\ww}\left( \frac{1}{|\Aut(\ww)|}\sum_{\wh{\tau}\in \wh{\f{T}}_{\ww}(\stab)} \nu(\wh{\tau})\s{I}^{\f{i}}(R_{\ww}\f{Flag}(\wh{\tau}))\right).
\end{align*}
\end{thm}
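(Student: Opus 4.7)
The plan is to apply Theorem \ref{TropicalScattering} directly to the scattering diagram $\f{D}^{\f{i}}_{\scat}$, using the explicit description \eqref{logfi} of the initial scattering functions and Lemma \ref{CompSeriesLem} to rewrite ordered products of the $\kappa_i$'s as flag varieties. The assumption $\f{i}\supset \f{i}^{\Skew}$ is precisely what guarantees that $\f{g}^{\f{i}}$ is skew-symmetric, hence has Abelian walls, so the hypotheses of Theorem \ref{TropicalScattering} are met. The index set $I$ appearing in \S \ref{ScatTrop} will be $Q_0$, and for each $i\in Q_0$ we have $v_i = p^*(e_i)$.

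First I would identify the initial scattering diagram for $\f{D}_{\scat}^{\f{i}}$ in the setup of \S \ref{ScatTrop}. By \eqref{logfi} together with the definition of $\f{D}_{\In}^{\f{i}}$, the wall on $e_i^{\perp}$ has scattering function
\[
  g_i \;=\; \sum_{w\geq 1} g_{i,w}, \qquad g_{i,w} \;=\; R_w\,\s{I}^{\f{i}}(\kappa_i^{w}).
\]
These $g_{i,w}$'s lie in parallel graded pieces $\f{g}^{\f{i}}_{we_i}$, and since $B(we_i,w'e_i)=0$, they pairwise commute in $\f{g}^{\f{i}}$, matching the hypothesis on $\f{D}_{\In}$ in \S \ref{ScatTrop}. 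So $\f{D}^{\f{i}}_{\scat}$ is literally the output of the construction of $\scat(\f{D}_{\In})$ to which Theorem \ref{TropicalScattering} applies.

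Next I would compute the ribbon multiplicity of $\wh{\tau}\in \wh{\f{T}}_{\ww}(\stab)$ in the associative algebra $\f{A}^{\f{i}}$ of Example \ref{fA}. Using the product expression \eqref{MultribProd} together with $g_{ij}=g_{i,w_{ij}}=R_{w_{ij}}\,\s{I}^{\f{i}}(\kappa_i^{w_{ij}})$, the scalars $R_{w_{ij}}$ can be pulled out in front (they lie in the center), yielding
\[
  \Mult^{\rib}(\wh{\tau}) \;=\; \nu(\wh{\tau})\,R_{\ww}\,\s{I}^{\f{i}}\!\left(\kappa_{i_1}^{w_{i_1j_1}}\kappa_{i_2}^{w_{i_2j_2}}\cdots \kappa_{i_{l(\ww)}}^{w_{i_{l(\ww)}j_{l(\ww)}}}\right),
\]
where I use that $\s{I}^{\f{i}}$ is an algebra homomorphism to combine the individual factors into a single integral. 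By Lemma \ref{CompSeriesLem} and the identification \eqref{kappaFlag}, this product of $\kappa$'s represents $\f{Flag}(\wh{\tau})\to \s{M}$, so $\Mult^{\rib}(\wh{\tau}) = \nu(\wh{\tau})R_{\ww}\s{I}^{\f{i}}(\f{Flag}(\wh{\tau}))$.

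Finally, I would invoke Theorem \ref{TropicalScattering}: combining its description of $g_{\f{d}}$ as $N(\stab) = \sum_{\ww}\frac{1}{|\Aut(\ww)|}\sum_{\wh{\tau}\in \wh{\f{T}}_{\ww}(\stab)} \Mult^{\rib}(\wh{\tau})$ with the multiplicity computation above yields exactly the claimed formula, and the case when no wall contains $\stab$ gives $N(\stab)=0$ by the same theorem. There is essentially no obstacle here beyond bookkeeping, since all the substantive work has already been done: the existence/consistency of $\f{D}^{\f{i}}_{\scat}$ is Theorem \ref{KSGS}, the tropical description is Theorem \ref{TropicalScattering}, the explicit form of the initial walls is \eqref{logfi}, and the translation of $\kappa$-products to flags is Lemma \ref{CompSeriesLem}. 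The only mildly delicate point is making sure that one can interchange the ordered product in $\f{A}^{\f{i}}$ with the algebra homomorphism $\s{I}^{\f{i}}$; this is immediate since $\s{I}^{\f{i}}$ extends to a homomorphism of associative algebras from $H_{\reg}(Q,W)/\f{i}$ (localized at $t-t^{-1}$) to $\f{A}^{\f{i}}$ by construction.
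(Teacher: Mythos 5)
Your proposal is essentially the paper's own proof: both apply Theorem \ref{TropicalScattering} to $\f{D}_{\scat}^{\f{i}}$ (which is legitimate because $\f{i}\supset\f{i}^{\Skew}$ gives skew-symmetry, hence Abelian walls), read off the graded pieces of the initial scattering function from \eqref{logfi}, use \eqref{MultribProd} to express the ribbon multiplicity as $\nu(\wh\tau)R_{\ww}$ times an ordered product of $\kappa_i$'s, and then invoke Lemma \ref{CompSeriesLem} via \eqref{kappaFlag} to rewrite that product as $\f{Flag}(\wh\tau)$. The only cosmetic difference is that you compute directly in $\f{A}^{\f{i}}$ and push $\s{I}^{\f{i}}$ through the product, whereas the paper works entirely in the Hall algebra and applies $\s{I}^{\f{i}}$ once at the end; these are equivalent because, as you note, $\s{I}^{\f{i}}$ is a homomorphism of associative algebras.
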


Combining Theorem \ref{HallScatThm} with Theorem \ref{thm:Hallscattering} and Lemma \ref{GenteelProp}, we immediately obtain the following:
\begin{thm}[Main result]\label{Main-DT-Trop-Thm}
Let $(Q,W)$ be a quiver with genteel potential over $\f{g}^{\f{i}}$ for some $\f{i}\supset \f{i}^{\Skew}$.  Let $\stab\in M_{\bb{R}}$ be general.  Then
\begin{align}\label{MainEqn}
    \s{I}^{\f{i}}(\log(1_{\sst}(\stab))) = \sum_{\ww}\left( \frac{1}{|\Aut(\ww)|}\sum_{\wh{\tau}\in \wh{\f{T}}_{\ww}(\stab)} \nu(\wh{\tau})\s{I}^{\f{i}}(R_{\ww}\f{Flag}(\wh{\tau}))\right).
\end{align}
\end{thm}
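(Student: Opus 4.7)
The plan is to combine the three pillars already assembled in the excerpt: Bridgeland's description of the Hall algebra scattering diagram (Theorem \ref{thm:Hallscattering}), the tropical description of scattering diagrams produced by $\scat(\f{D}_{\In})$ (Theorem \ref{HallScatThm}), and the genteelness hypothesis (Lemma \ref{GenteelProp}, extended to $\f{g}^{\f{i}}$). Both sides of \eqref{MainEqn} will be identified as the scattering function at $\stab$ of one and the same consistent scattering diagram, namely $\f{D}^{\f{i}} = \s{I}^{\f{i}}(\f{D}^{\Hall})$.

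First, I would interpret the left-hand side. By Theorem \ref{thm:Hallscattering}, since $\stab$ is general, there is (up to equivalence) a unique wall $(\f{d},g_{\f{d}})$ of $\f{D}^{\Hall}$ containing $\stab$, and $\exp(g_{\f{d}}) = 1_{\sst}(\stab)$, i.e.\ $g_{\f{d}} = \log 1_{\sst}(\stab)$. Applying $\s{I}^{\f{i}}$ gives that the scattering function of $\f{D}^{\f{i}}$ at $\stab$ equals $\s{I}^{\f{i}}(\log 1_{\sst}(\stab))$, which is the left-hand side of \eqref{MainEqn}.

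Next, I would identify the right-hand side. The genteelness hypothesis over $\f{g}^{\f{i}}$ means, by definition (and as noted after Lemma \ref{GenteelProp}), that $\f{D}^{\f{i}} = \f{D}^{\f{i}}_{\scat} = \scat(\f{D}^{\f{i}}_{\In})$ up to equivalence, where the incoming walls $\f{D}^{\f{i}}_{\In}$ are the images under $\s{I}^{\f{i}}$ of the walls $(e_i^{\perp}, \log 1_{\sst}(p^*(e_i)))$. The key formula is the expansion \eqref{logfi}, $\log f_i = \sum_{k\geq 1} R_k \kappa_i^k$, which puts $\f{D}^{\f{i}}_{\In}$ in exactly the form required by the setup of \S \ref{ScatTrop}, with the element $g_{i,w} = R_w \s{I}^{\f{i}}(\kappa_i^w)$ attached to the wall $e_i^{\perp}$. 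Crucially, the terms $g_{i,w}$ pairwise commute in $\f{g}^{\f{i}}$ because they all have dimension vectors that are positive multiples of $e_i$, so their pairwise brackets vanish in $\f{g}^{\f{i}}\supseteq \f{g}^{\Hall}/\f{i}^{\Skew}$; this also shows that $\f{g}^{\f{i}}$ has Abelian walls, so that Theorem \ref{HallScatThm} is applicable.

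Finally, I would invoke Theorem \ref{HallScatThm} for $\stab$ general, which says the scattering function at $\stab$ of $\f{D}^{\f{i}} = \f{D}^{\f{i}}_{\scat}$ equals
\[
N(\stab) = \sum_{\ww} \frac{1}{|\Aut(\ww)|} \sum_{\wh{\tau}\in \wh{\f{T}}_{\ww}(\stab)} \nu(\wh{\tau})\, \s{I}^{\f{i}}(R_{\ww}\f{Flag}(\wh{\tau})),
\]
where the identification of $\Mult^{\rib}(\wh{\tau})$ with $\nu(\wh{\tau})R_{\ww}\f{Flag}(\wh{\tau})$ was already carried out in \eqref{kappaFlag} and the line before it, using \eqref{MultribProd}, \eqref{logfi}, and Lemma \ref{CompSeriesLem}. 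Matching the two descriptions of the scattering function at $\stab$ gives \eqref{MainEqn}. The only non-routine step is verifying that the Abelian walls hypothesis of Theorem \ref{HallScatThm} genuinely holds for $\f{g}^{\f{i}}$ whenever $\f{i}\supseteq \f{i}^{\Skew}$; this is precisely the reason for introducing $\f{i}^{\Skew}$ in \S \ref{HallScatSetup}, and it is the conceptual obstacle that prevents the theorem from being stated directly in the Hall algebra $\f{g}^{\Hall}$ itself.
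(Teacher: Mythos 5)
Your proposal is correct and follows exactly the same route as the paper: the paper's proof is the single line ``Combining Theorem \ref{HallScatThm} with Theorem \ref{thm:Hallscattering} and Lemma \ref{GenteelProp}, we immediately obtain the following,'' and your write-up fills in precisely those steps (including the reduction \eqref{logfi}/\eqref{kappaFlag} already carried out in \S\ref{MainThmSect} and the Abelian-walls observation built into $\f{i}\supset\f{i}^{\Skew}$). The only slip is cosmetic: you wrote $\f{g}^{\f{i}}\supseteq\f{g}^{\Hall}/\f{i}^{\Skew}$ where the containment goes the other way (it is a quotient), but the substance of the argument is unaffected.
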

Theorem \ref{MainIntro} is the special case where $\f{i}=\ker(\s{I}_t)$.  The classical limit is the case where $\f{i}=\ker(\s{I})$.

\begin{rmk}\label{MultComputation}
While we find the expression of $\Mult^{\rib}(\wh{\tau})$ in terms of moduli of flags to be interesting, it is of course not generally simple to compute $\f{Flag}(\wh{\tau})$.  However, it is not difficult to describe the quantum and classical integrals of the terms $R_{\ww}\f{Flag}(\wh{\tau})$.

First, for the quantum cases, recall from \eqref{kappaFlag} that $\f{Flag}(\wh{\tau})$ arose as a product $\kappa_{i_1}^{w_{i_1j_1}}\cdots \kappa_{i_{l(\ww)}}^{w_{i_{l(\ww)}j_{l(\ww)}}}$.  From \eqref{IkappaEx}, $\s{I}_t(\kappa_{i_j})=tz^{e_{i_j}}\in \bb{C}_t[N^{\oplus}]$.  Hence, defining $R'_k\coloneqq \frac{(-1)^{k-1}}{k(t^k-t^{-k})}$ and $R'_{\ww}\coloneqq \prod_{i,j} R'_{w_{ij}}$, we have
\begin{align}\label{ItComputation}
    \s{I}_t(R_{\ww}\f{Flag}(\wh{\tau})) = R'_{\ww} \cdot z^{w_{i_1j_1}e_{i_1}}\cdots z^{w_{i_{l(\ww)}j_{l(\ww)}}e_{i_{l(\ww)}}}\in \f{A}^q.
\end{align}

Now for the classical version, recall that the map $\pi_{t\mapsto 1}:\f{g}^{q}\rar \f{g}^{\scl}$ takes $\frac{z^n}{t-t^{-1}}$ to $z^n$.  Let us embed $\f{g}^{\scl}$ into the Weyl algebra $\f{A}^{\scl}$ as in Example \ref{fA}, so $z^n$ becomes $z^n\partial_{B(n,\cdot)}$.  Let $R^{\scl}_k\coloneqq \frac{(-1)^k}{k^2}$ and $R^{\scl}_{\ww}\coloneqq \prod_{i,j} R^{\scl}_{w_{ij}}$.  We then obtain
\begin{align*}
    \s{I}(R_{\ww}\f{Flag}(\wh{\tau})) = R^{\scl}_{\ww}\cdot   z^{w_{i_1j_1}e_{i_1}}\partial_{B(w_{i_1j_1}e_{i_1},\cdot)}\cdots z^{w_{i_{l(\ww)}j_{l(\ww)}}e_{i_{l(\ww)}}}\partial_{B(w_{i_{l(\ww)}j_{l(\ww)}}e_{i_{l(\ww)}},\cdot)}\in \f{A}^{\scl}.
\end{align*}

We wrote Theorems \ref{HallScatThm} and \ref{Main-DT-Trop-Thm} in terms of tropical disk counts because we do not know a nice moduli-theoretic description of the tropical curve multiplicities.  However, there are again nice interpretations for the quantum and classical integrals.  Consider a tropical curve type $\tau\in \f{T}_{\ww}(\theta)$.  For each vertex $V$ of $\tau\setminus \{V_{\infty}\}$, let $u_1$ and $u_2$ be any two of the weighted tangent vectors of edges emanating from $V$.  Define $\Mult(V)\coloneqq |B(u_1,u_2)|$.  Then the classical multiplicity $\Mult(\tau)$ is given by 
\begin{align*}
    \Mult(\tau)=R^{\scl}_{\ww}\prod_V \Mult(V),
\end{align*}
and the quantum multiplicity $\Mult^q(\tau)$, by which we mean $\Mult(\tau)$ in the quantum cases, is given by
\begin{align*}
\Mult^q(\tau) = R'_{\ww}\prod_V [\Mult(V)]_t,
\end{align*}
where $[\Mult(V)]_t$ is defined as in \eqref{at}.
\end{rmk}

\begin{eg}\label{TermComputation}
Let us continue our ongoing example of the $A_2$ quiver $1 \rightarrow 2$ with $W=0$ as in Example \ref{ex:a2diag}.  Recall that in this case, $L \cong \Z^2$, $I=\{1,2\}$, and $B= \left(
\begin{array}{c c}
0 & -1 \\ 1 & 0
\end{array}
\right)$. Consider the weight-vector $\ww=((1,1),(1,1))$, so $\Aut(\ww)=2^2 = 4$.  For $\stab\in \bb{R}_{>0} (1,-1)$, a possible tropical disk type $\tau \in \f{T}_{\ww}(\stab)$ is illustrated in Figure \ref{disk}: 

\begin{figure}[htb]
\def\svgwidth{150pt}
    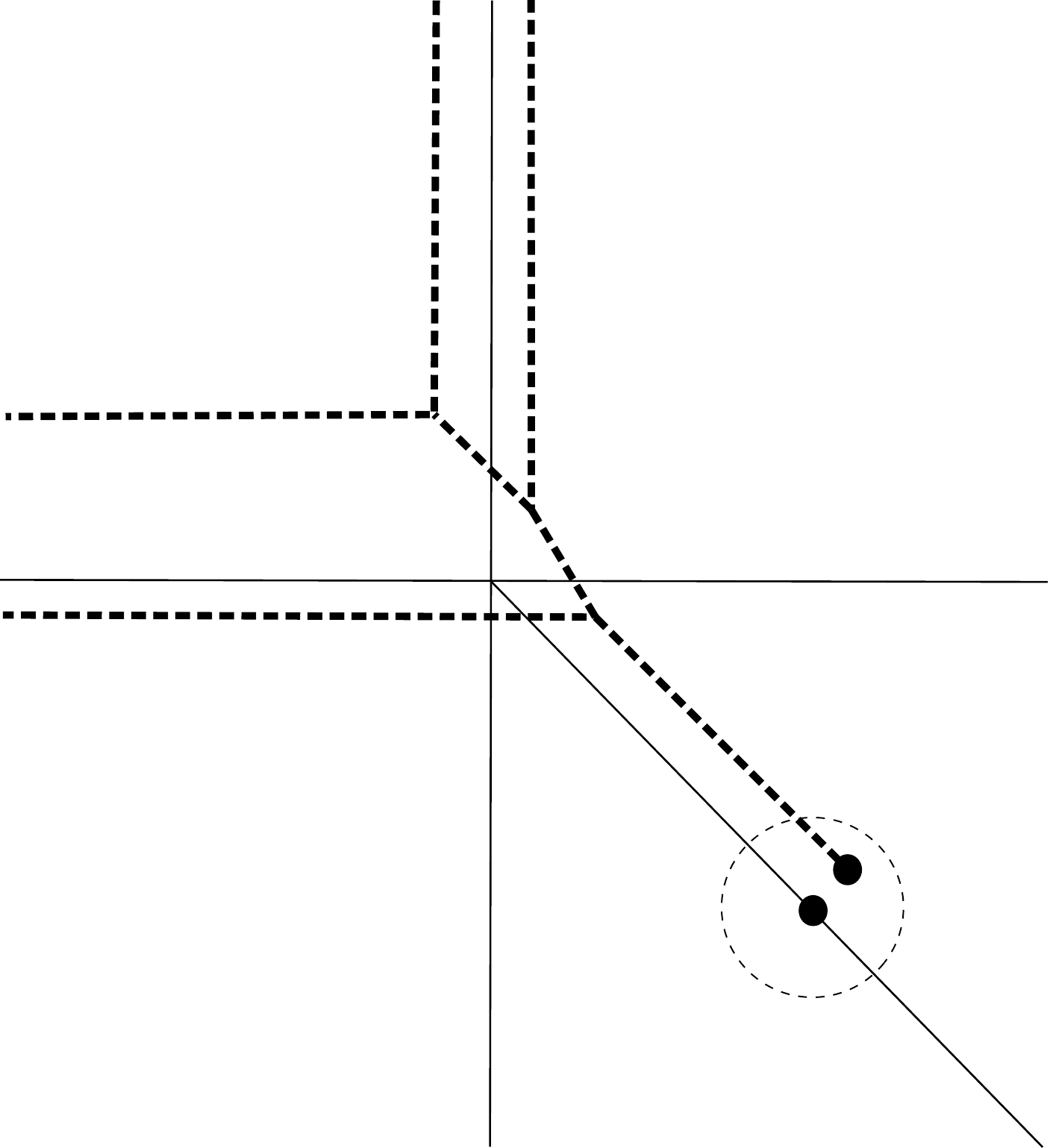
\caption{A tropical disk (given by the dashed segments) representing a type $\tau\in \f{T}_{\ww}(\stab)$.  The unbounded segments of the tropical curve are in generically specified lines.  The dashed circle around $\stab$ is an $\epsilon$-ball $B_{\epsilon}(\stab)$.  The solid rays are the support of the scattering diagram.} \label{disk}
\end{figure}

Since there are $3$ vertices, there are $2^3$ possible ribbon structures on this tropical disk.  One such ribbon structure $\wh{\tau}$ is illustrated in Figure \ref{ribbon}, namely, the ribbon structure for which $\nu(V)=1$ for each $V$.  

\begin{figure}[htb]
\def\svgwidth{150pt}
    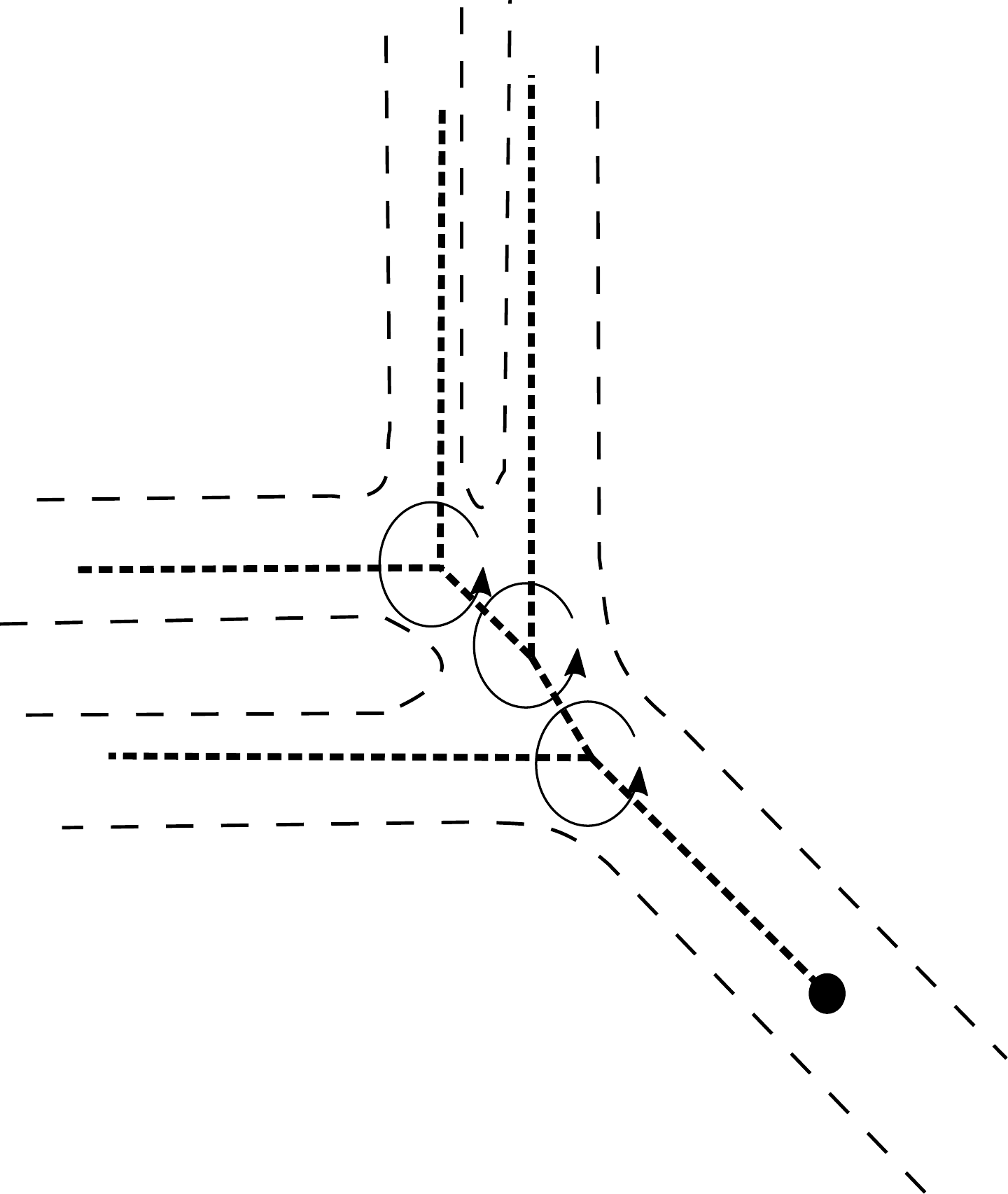
\caption{A tropical ribbon type $\wh{\tau}$ associated to the disk from Figure \ref{disk}.  The dashed curves around the tropical ribbon outline a topological realization of the ribbon, the induced orientation being clockwise.} 
\label{ribbon}
\end{figure}

The multiplicity $\Mult^{\rib}(\wh{\tau})$ is given by $\s{I}^{\f{i}}(\nu(\wh{\tau})R_{\ww}\kappa_1\kappa_1\kappa_2\kappa_2)$.  We easily see $\nu(\wh{\tau})=(-1)^3=-1$, and $R_{\ww}=\left(\frac{1}{q-1}\right)^3$.  The space $\f{Flag}(\wh{\tau})=\kappa_1\kappa_1\kappa_2\kappa_2$ is the space of composition series of the following form:
\[
(0 \rightarrow 0 ) \subset (\C \rightarrow 0) \subset (\C^2 \rightarrow 0) \subset (\C^2 \rightarrow \C ) \subset (\C^2 \rightarrow \C^2). 
\]
All maps $V\rightarrow W$ here, where $V, W$ are $\C$-vector spaces, are necessarily the $0$ map, and so by Lemma \ref{CompSeriesLem} we have $\f{Flag}(\wh{\tau})=\f{Flag}(S_2,S_2,S_1,S_1;\C^2 \stackrel{0}{\rightarrow}\C^2)\kappa_{\C^2 \stackrel{0}{\rightarrow}\C^2}$.  As in Example \ref{kappai}, since we work modulo automorphisms of $\C^2 {\xrightarrow{0}}\C^2$, 
there is only one flag, and its stabilizer group is $U_2(\C)^2$ (one copy of the unitary group $U_2(\C)$ for each $\C^2$).
Hence, we find
\begin{align*}
   \f{Flag}(\wh{\tau}) = \frac{1}{|U_2(\C)|^2} \kappa_{\C^2 \stackrel{0}{\rightarrow} \C^2} = \frac{1}{q^2}\kappa_{\C^2 \stackrel{0}{\rightarrow} \C^2}
\end{align*}

So the contribution of this tropical ribbon $\wh{\tau}$ to \eqref{MainEqn} is 
\begin{align}\label{contribution}
   \frac{1}{|\Aut(\ww)|} \nu(\wh{\tau})\s{I}^{\f{i}}(R_{\ww}\f{Flag}(\wh{\tau})) &= \left(\frac{-1}{4}\right) \s{I}^{\f{i}}\left(\frac{1}{(q-1)^3}\cdot\frac{1}{q^2}\kappa_{\C^2 \stackrel{0}{\rightarrow} \C^2}\right).
\end{align}
In the quantum case $\f{i}=\ker(\s{I}_t)$, we find using Proposition \ref{I-Ups} that $$\s{I}_t(\kappa_{\C^2 {\xrightarrow{0}} \C^2}) = \Upsilon(\mbox{pt})t^{\chi((2,2),(2,2))}z^{(2,2)} = t^4z^{(2,2)},$$ and so \eqref{contribution} becomes
\begin{align*}
    \frac{-z^{(2,2)}}{4(q-1)^3}.
\end{align*}
Alternatively, this may be computed using \eqref{ItComputation}.
\end{eg}

\section{Broken lines and theta functions}\label{ThetaSection}

\subsection{Definitions of broken lines and theta functions}\label{brokentheta}

Recall the notation and setup of \S \ref{Scattering}.  Fix a scattering diagram $\f{D}$.  
 Suppose we have a commutative ring $R$ and a $\Lambda$-graded $R$-algebra $A=\bigoplus_{\lambda\in \Lambda} A_{\lambda}$ with $A_0=R$ on which $\f{g}$ acts via $\Lambda$-graded $R$-algebra derivations.  We say that this action is \textbf{skew-symmetric} if $\f{g}_n\cdot A_{\lambda}=0$ whenever $\{n,\lambda\}=0$.  Let $\wh{A}$ denote the $(\Lambda^+)$-adic completion of $A$.  Note that $\wh{G}$ acts on $\wh{A}$ via $\Lambda$-graded $R$-algebra automorphisms.

\begin{dfn}\label{broken line}
Let $\lambda \in \Lambda\setminus \{0\}$, $\enpt \in \Lambda^{\vee}_{\bb{R}}\setminus \Supp(\f{D})$.  A  \textbf{broken line} $\gamma$ with  ends $(\lambda,\enpt)$
is the data of a continuous map $\gamma:(-\infty,0]\rar \Lambda^{\vee}_{\bb{R}}\setminus \Joints(\f{D})$, values $-\infty =:t_{-1}< t_0 \leq t_1 \leq \ldots \leq t_{\ell} = 0$, and for each $i=0,\ldots,\ell$, an associated homogeneous element $a_i \in A_{\lambda_i}$ for some $\lambda_{i} \in \Lambda\setminus \{0\}$, such that:
\begin{enumerate}[label=(\roman*), noitemsep]
\item $\lambda_0=\lambda$ and $\gamma(0)=\enpt$.
\item  For $i=0\ldots, \ell$, $\gamma'(t)=-p^*(\lambda_i)$ for all $t\in (t_{i-1},t_{i})$.  
\item $a_0=z^{\lambda}$.
\item For $i=0,\ldots,\ell-1$, $\gamma(t_i)\in \Supp(\f{D})$.  Let 
\begin{align}\label{kink}
g_i\coloneqq \prod_{\substack{(\f{d},g_{\f{d}})\in \f{D} \\ \f{d}\ni \gamma(t_i)}} \exp(g_{\f{d}})^{\sign\langle n_{\f{d}},p^*(\lambda)\rangle} \in \wh{G}.
\end{align}
I.e., $g_i$ is the $\epsilon\rar 0$ limit of the wall-crossing automorphism $\Phi_{\gamma|_{(t_i-\epsilon,t_i+\epsilon)}}$ defined in \eqref{WallCross} (using a smoothing of $\gamma$). Then $a_{i+1}$ is a homogeneous term of $g_i\cdot a_i$, other than $a_i$.
\end{enumerate}

The \textbf{theta function} $\vartheta_{\lambda,\enpt}\in \wh{A}$ is defined by
\begin{align*}
    \vartheta_{\lambda,\enpt}\coloneqq \sum_{\gamma} a_{\gamma},
\end{align*}
where the sum is over all broken lines $\gamma$ with ends $(\lambda,\enpt)$, and $a_{\gamma}$ denotes the element of $A$ associated to the last straight segment of $\gamma$.
\end{dfn}

 If $\f{g}$ is skew-symmetric and the action on $A$ is skew-symmetric, then \cite[Thm. 2.14]{Man3} (a refinement of \cite[Lemmas 4.7, 4.9]{CPS}) states the following:

\begin{lem}[The Carl-Pumperla-Siebert Lemma]\label{CPS}
Suppose $\f{g}$ is skew-symmetric with skew-symmetric action on $A$, and suppose $\f{D}=\scat(\f{D}_{\In})$ as in Theorem \ref{KSGS}.   Let $\gamma$ be a smooth path in $\Lambda^{\vee}_{\bb{R}}\setminus \Joints(\f{D})$ from $\enpt_1$ to $\enpt_2$, with $\enpt_1,\enpt_2\notin \Supp(\f{D})$.  Then for any $\lambda\in \Lambda$,
\begin{align*}
    \vartheta_{\lambda,\enpt_2}=\Phi_{\gamma,\f{D}} (\vartheta_{\lambda,\enpt_1}).
\end{align*}
\end{lem}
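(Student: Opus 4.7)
The plan is to reduce to a local calculation at a single wall and then invoke the consistency of $\f{D}$, following the strategy of \cite[Thm.~2.14]{Man3}. First, since broken line types vary locally constantly in $\enpt$ away from $\Supp(\f{D})$, I can homotope $\gamma$ rel endpoints within chambers, so it suffices to treat the case where $\gamma$ is a short arc crossing a single wall $\f{d}$ at a general point $x = \gamma(t_*) \in \f{d} \setminus \Joints(\f{D})$; the general result follows by composing the wall-crossings in order along $\gamma$, since the path-ordered product $\Phi_{\gamma,\f{D}}$ is defined this way by construction.

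For the local step, I would partition the broken lines $\gamma_b$ ending at $\enpt_2$ into two types: (a) those with no bend at $x$, which extend uniquely to broken lines ending at $\enpt_1$ by prolonging their last straight segment across $\f{d}$; and (b) those whose final bend sits at $x$, whose terminal term $a_\ell$ is a homogeneous summand of $g\cdot a'$ for $g = \exp(g_{\f{d}})^\epsilon$ with $\epsilon = \sign \langle n_{\f{d}}, -\gamma'(t_*)\rangle$ and $a'$ the term immediately before the bend. Summing $a_{\gamma_b}$ over all $\gamma_b$, type (a) contributes exactly $\vartheta_{\lambda,\enpt_1}$, while type (b) contributes $(\exp(g_{\f{d}})^\epsilon - 1)\cdot \vartheta_{\lambda,\enpt_1}$; the total is $\Phi_{\gamma,\f{D}}(\vartheta_{\lambda,\enpt_1})$.

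The identification of the type (b) sum with $(\exp(g_{\f{d}})^\epsilon - 1)\cdot \vartheta_{\lambda,\enpt_1}$ requires induction in the $(\Lambda^+)$-adic filtration of $\wh{A}$: the ``pre-bend'' initial portion of each type (b) broken line is itself a broken line ending at $x$, whose contribution matches, by the inductive hypothesis applied to a short path from $\enpt_1$ to $x$, a theta function term at $\enpt_1$ transported to the wall. Skew-symmetry of the $\f{g}$-action on $A$ is essential here: it forces $g_{\f{d}}\cdot a' = 0$ whenever $\{n_{\f{d}},\lambda'\} = 0$ for $a'\in A_{\lambda'}$, so no degenerate type (b) bends arise with $a_\ell = a'$ in violation of Definition~\ref{broken line}(iv). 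Possible coincidences where some intermediate bend of $\gamma_b$ happens to occur on $\gamma$ (or where the final bend occurs at a joint) are handled by small perturbation, using that $\codim\Joints(\f{D}) \geq 2$, or alternatively by adapting the explicit perturbation trick of \S \ref{TropScatProof}.

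The principal obstacle is the inductive matching in type (b): the ``pre-bend'' segments have their endpoints on the wall $\f{d}$ rather than in open chambers, so one must carefully track how the broken line contributions at $x$ relate to those at $\enpt_1$, and this is precisely where the consistency of $\f{D} = \scat(\f{D}_{\In})$ enters. Skew-symmetry of both $\f{g}$ and its action on $A$ is what allows this matching to work coherently; its failure in the Hall algebra setting is what produces the explicit counterexample of \S \ref{CPS-counterexample-section}.
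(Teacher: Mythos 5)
The paper does not actually prove Lemma~\ref{CPS}; it is stated as a citation to \cite[Thm.~2.14]{Man3}, which refines \cite[Lemmas~4.7, 4.9]{CPS}. So there is no in-paper proof to compare your attempt against, but your attempt can be checked against the strategy of those references.

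Your outline captures the right top-level strategy (reduce to a single wall-crossing; sort broken lines by whether the last bend is at the crossed wall; induct on $(\Lambda^+)$-adic order), but it contains a genuine gap exactly where you flag ``the principal obstacle,'' and the remarks you make there do not close it. The ``pre-bend'' segment of a type~(b) broken line ends at a point $x\in\Supp(\f{D})$, where $\vartheta_{\lambda,\cdot}$ is not defined, so the inductive hypothesis cannot be invoked for ``a short path from $\enpt_1$ to $x$.'' The resolution in \cite{CPS,Man3} is not a local trick: one replaces $\f{D}$ by an asymptotically equivalent perturbed diagram whose walls are in general position (essentially the $\wt{T}_k$-deformation of \S\ref{TropScatProof}), proves that this perturbation leaves the theta functions unchanged, and only then carries out the bend analysis in a setting where all wall intersections are transverse; gesturing at ``adapting the explicit perturbation trick'' glosses over a substantial separate argument about how broken lines (not walls) behave under that deformation. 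Likewise, the claim that ``broken line types vary locally constantly in $\enpt$ away from $\Supp(\f{D})$'' is false: the set of contributing types jumps across codimension-one loci inside chambers (where a bend point hits a joint or a wall boundary), and that $\vartheta_{\lambda,\enpt}$ is nonetheless locally constant in a chamber is part of what the lemma asserts and has to be proved, not assumed. Finally, your diagnosis of skew-symmetry is off target. The essential mechanism, illustrated by the counterexample in \S\ref{CPS-counterexample-section}, is that the local-to-global matching reduces to Jacobi-identity cancellations of iterated brackets such as $[[\kappa_2,\kappa_1],[\kappa_3,z^m]] = [\kappa_2,[\kappa_1,[\kappa_3,z^m]]] - [\kappa_1,[\kappa_2,[\kappa_3,z^m]]]$, and the terms that need to vanish, such as $[\kappa_2,[\kappa_3,z^m]]$, do so only under the skew-symmetry hypothesis (here $B^{\prin}((e_2,0),(e_3,m))=0$). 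That is a global cancellation in the recursive computation of wall-crossings, not a matter of excluding degenerate bends with $a_\ell=a'$; in the Hall algebra that bracket survives, and this is exactly what produces the counterexample.
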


In any case, we have a copy $\wh{A}_{\enpt}$ of $\wh{A}$ and a collection of elements $\{\vartheta_{\lambda,\enpt}|\lambda \in \Lambda\}\subset \wh{A}_{\enpt}$ associated to every $\enpt\in \Lambda_{\bb{R}}^{\vee}\setminus \Supp(\f{D})$.  If $\f{D}$ is consistent, then the identifications of the $\wh{A}_{\enpt}$'s with $\wh{A}$ are all compatible with the path-ordered products. 
Furthermore, if Lemma \ref{CPS} holds, it says that the elements $\vartheta_{\lambda,\enpt}\in\wh{A}_{\enpt}$ are also compatible with the path-ordered products, thus giving a canonical collection of elements $\vartheta_{\lambda}\in \wh{A}$. 
We may therefore simply denote $\vartheta_{\lambda,\enpt}$ as $\vartheta_{\lambda}$.

\subsection{Hall algebra, quantum, and classical broken lines}\label{ClusterFlavors}

Take $\Lambda=N^{\prin}\coloneqq N\oplus M$, and take $\Lambda^{\oplus}\coloneqq (N^{\oplus},0)$. 
Denote $M^{\prin}\coloneqq (N^{\prin})^{\vee}=M\oplus N$.  Take $\{\cdot,\cdot\}$ to be the $\bb{Z}$-valued skew-symmetric form $B^{\prin}$ on $N^{\prin}$ defined via
\begin{equation} \label{eq:sform}
    B^{\prin}((n_1,m_1), (n_2,m_2)) = B(n_1,n_2) - \langle n_1,m_2\rangle + \langle n_2,m_1\rangle.
\end{equation}
We will write
\begin{align*}
    \pi^*:N&\rar M \\
     n &\mapsto B(\cdot,n), 
\end{align*}
while the map $p^*$ of \eqref{pistar} will be denoted by
\begin{align}\label{pprin}
p^{*,\prin}: N^{\prin}&\rar M^{\prin} \nonumber\\
             (n,m)&\mapsto B^{\prin}(\cdot,(n,m)) = (\pi^*(n)-m,n).
\end{align}
One can show that $B^{\prin}$ is unimodular, so the map $p^{*,\prin}$ is an isomorphism.

Now for our $\Lambda$-graded algebras, we take the following:
\begin{align*}
    A^{\Hall,\prin}\coloneqq H_{\reg}(Q,W)\otimes_{\bb{C}_{\reg}(t)} \bb{C}_{\reg}(t)[M],
\end{align*}
\begin{align*}
    A^{q,\prin}\coloneqq \bb{C}_t[N^{\oplus}]\otimes_{\bb{C}_{\reg}(t)} \bb{C}_{\reg}(t) [M],
\end{align*}
and
\begin{align*}
    A^{\scl,\prin}\coloneqq \bb{C}[N^{\oplus}]\otimes_{\bb{C}} \bb{C}[M].
\end{align*}
The algebra structure on $A^{\Hall,\prin}$ is determined by specifying that for $a_d\in H_{\reg}(Q,W)_d$ and $m\in M$, we have
\begin{align}\label{adzm}
    a_d*z^m = q^{-\langle d,m\rangle} z^m*a_d.
\end{align}
Similarly, for $A^{q,\prin}$ we specify that
\begin{align*}
    z^d*z^m=q^{-\langle d,m\rangle} z^m*z^d.
\end{align*}
Equivalently, $A^{q,\prin}$ is the quantum torus algebra $\bb{C}_t[\Lambda]$ with respect to the form $B^{\prin}$.  Finally, $A^{\scl,\prin}$ is just given the usual algebra structure, making it into $\bb{C}[\Lambda]$.

For the action of $\f{g}^{\Hall}$ on $A^{\Hall,\prin}$ we take the adjoint action, i.e., 
\begin{align*}
    g\cdot a\coloneqq [g,a]=ga-ag,
\end{align*}
using the natural inclusion of $\f{g}^{\Hall}$ into $(t-t^{-1})^{-1}\cdot A^{\Hall,\prin}$ to make sense of the multiplication.  Similarly for the action of $\f{g}^{q}$ on $A^{q,\prin}$.  The action of $\f{g}^{\scl}$ is the action with respect to the Poisson bracket, i.e.,
\begin{align*}
    z^d \cdot z^{(n,m)}\coloneqq \{z^{(d,0)},z^{(n,m)}\} = B^{\prin}((d,0),(n,m))z^{(n+d,m)}.
\end{align*}
One sees that the maps $\s{I}_t$, $\s{I}$, and $\pi_{t\mapsto 1}$ extend to homomorphisms between these algebras which commute with the corresponding Lie algebra actions.

We note that we could also define $A^{\f{i},\prin}$ for any other $\f{i}\supseteq \f{i}^{\Skew}$ by applying $\s{I}^{\f{i}}$ to $A^{\Hall,\prin}$.  The induced $\f{g}^{\f{i}}$-actions on $A^{\f{i},\prin}$ are skew-symmetric, thus yielding new examples of algebras for which Lemma \ref{CPS} holds.

Let $\Box$ represent $\Hall$, $\f{i}$, $q$, or $\scl$.  We can consider scattering diagrams in $M^{\prin}_{\bb{R}}$ over $\f{g}^{\Box}$.  We take $\f{D}_{\scat}^{\Box,\prin}\coloneqq \scat(\f{D}^{\Box,\prin}_{\In})$, where $\f{D}^{\Box,\prin}_{\In}$ is defined as in \eqref{DHallIn}, but with $(e_i,0)^{\perp}$ in place of $e_i^{\perp}$, and with $\log 1_{\sst}(p^*(e_i))$ replaced with its image under $\s{I}^{\f{i}}$, $\s{I}_t$ or $\s{I}$ if $\Box$ represents $\f{i}$, $q$ or $\scl$, respectively.  Note that the intersection of $\f{D}^{\Box}$ with $(M_{\bb{R}},0)\subset M^{\prin}_{\bb{R}}$ agrees with what we previously called $\scat(\f{D}^{\Box}_{\In})$.

\begin{rmk}\label{ProjectionRemark}
Note that all scattering walls have supports of the form $(n,0)^{\perp}$ for $n\in N$, so they are invariant under translation by $(0,N_{\bb{R}})$.  It follows that $\vartheta_{\lambda,\enpt}$ is invariant under translation of $\enpt$ by elements of $(0,N_{\bb{R}})$, and when enumerating broken lines, it suffices to consider their projections modulo $(0,N_{\bb{R}})$.
\end{rmk}

Note that $\f{g}^{\Box}$ and the action on $A^{\Box,\prin}$ are skew-symmetric if $\Box=\f{i},q$ or $\scl$, but typically not for $\Box=\Hall$.
With this setup and for $\Box=\Hall$, the broken lines with ends $(\lambda,\enpt)$ with $\enpt\in  (M \oplus N)_{\R}$ and $\lambda \in  \Lambda$ are precisely the \textbf{Hall algebra broken lines} discussed in \cite{mthesis}.  These will be examined in \S \ref{CPS-counterexample-section}.

We now briefly explain how the above theta functions for $\Box = \scl$ relate to those of \cite{GHKK}.  In the usual cluster algebras language, $z^m$ for $m\in M$ gives the  $A$ cluster variables, while $z^n$ for $n\in N$ gives the $X$ cluster variables.\footnote{The $A$ and $X$ notation is due to Fock and Goncharov, cf. \cite{FG1}, and was also used by \cite{GHKK}.  Some other authors use the Fomin-Zelevinsky \cite{FZ4} convention of denoting the $A$-variables by $X$ and the $X$-variables by $Y$.} In the principle coefficients setting, we have $z^{(m,n)} = \prod_i A_i^{m_i}X_i^{n_i}$, where $m=\sum_i m_i e_i^*$ and $n=\sum_i n_i e_i$.  The theta functions on $\s{A}^{\prin}$, $\s{A}$, and $\s{X}$ are obtained as follows: 
\begin{itemize}
    \item Allowing any $\lambda \in \Lambda$, the resulting theta functions $\vartheta_{\lambda}^{\prin}\coloneqq \vartheta_{\lambda}$ are the theta functions which \cite{GHKK} constructs on the cluster variety with principal coefficients $\s{A}^{\prin}$ (or rather, on some formal version of this in general).  The theta functions $\vartheta^{\prin}_{\lambda}$ for $\lambda\in (N^{\oplus})^{\vee}$ (i.e., the positive span of the vectors $e_i^*$) are the ones examined by Bridgeland \cite[Thm. 1.4]{Bridge}. 
    \item One obtains the theta function $\vartheta^{\s{A}}_{\lambda}$ on the cluster $\s{A}$-variety via the projection $(n,m) \mapsto m$ of $\vartheta^{\prin}_{\lambda}$ (i.e. setting all the $X$-variables in $\vartheta_{\lambda}$ equal to $1$), assuming that this projection is well-defined, i.e., that it converges. The middle cluster algebra of \cite{GHKK} is defined to be the span of all the $\vartheta^{\s{A}}_{\lambda}$ for which the convergence holds.  The corresponding elements $\lambda$ form a cone $\Xi\subset M$ which contains the Fock-Goncharov \cite{FG1} cluster complex $C$.  The $\vartheta^{\s{A}}_{\lambda}$ for $\lambda\in C$ give the cluster monomials.
    \item By applying a change of variables $z^{(m,n)} \mapsto z^n$ to $\{\vartheta_{(n,m)}|m=\pi^*(n)\}$, one obtains \cite{GHKK}'s theta functions 
$\vartheta^{\s{X}}_{n}$  for the $\s{X}$-space (or a formal version thereof).  Note that $m=\pi^*(n)$ implies $p^{*,\prin}((n,m))=(0,n)$, so this change of variables essentially amounts to applying $p^{*,\prin}$.
\end{itemize}

The theta functions for $\Box=q$ are among those considered in \cite{Man3}.  It was recently shown in \cite{DMan} that these form bases for the quantum cluster varieties (or formal versions thereof), thus giving quantum analogs for the results of \cite{GHKK}.


\subsection{Hall algebra theta functions and the CPS lemma}\label{CPS-counterexample-section}

As noted in \S \ref{ClusterFlavors},  $\f{g}^{\Hall}$ and its action on $A^{\Hall,\prin}$ typically fail to be skew-symmetric, and so \cite{Man3}'s proof of Lemma \ref{CPS} does not apply to Hall algebra broken lines.  In fact, we provide here a counterexample, thus showing that:
\begin{prop}\label{CPSfail}
The analog of Lemma \ref{CPS} does not generally hold for theta functions constructed from Hall algebra broken lines.
\end{prop}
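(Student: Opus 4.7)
The plan is to exhibit an explicit counterexample for the $A_3$ quiver $1\rar 2\rar 3$ with zero potential. By Lemma \ref{AcyclicGenteel}, $(A_3,0)$ is genteel, so $\f{D}^{\Hall,\prin}_{\scat}$ is determined by the three incoming walls $((e_i,0)^{\perp},\log 1_{\sst}(p^*(e_i)))$, and all outgoing walls are produced by iterated commutators as in \eqref{Parents}. The distinctive feature of $A_3$ (as opposed to $A_2$) is that there are dimension vectors realized by multiple non-isomorphic indecomposables at the same stability conditions. In particular, at rays where pairs of indecomposables with parallel dimension vectors become simultaneously semistable, the Hall algebra scattering functions are non-commutative, whereas their images under $\s{I}_t$ commute. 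These are exactly the commutators living in $\f{i}^{\Skew}\setminus\{0\}$, and they are what causes Lemma \ref{CPS} to fail.

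Concretely, I would first write down $\f{D}^{\Hall,\prin}_{\scat}$ in terms of Hall-algebra classes $\kappa_{S_i}$ and classes of extensions, keeping enough walls to reach a ray (coming from $e_1+e_2+e_3$ or a similar vector) where multiple indecomposables contribute. Next I would choose $\lambda\in N^{\prin}$ and two basepoints $\enpt_1,\enpt_2\in M^{\prin}_{\bb{R}}\setminus\Supp(\f{D}^{\Hall,\prin}_{\scat})$ such that the straight-line path $\gamma$ from $\enpt_1$ to $\enpt_2$ crosses at least two walls whose scattering functions do not commute in $\f{g}^{\Hall}$ but do commute in $\f{g}^q$. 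By Remark \ref{ProjectionRemark} I may project to $M_{\bb{R}}$, which reduces the bookkeeping to a planar scattering picture. I would then enumerate all Hall algebra broken lines with ends $(\lambda,\enpt_j)$ for $j=1,2$ using Definition \ref{broken line}, truncating modulo a suitable power of the $\Lambda^+$-adic filtration so that only finitely many broken lines contribute to a chosen homogeneous component.

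Finally, I would compute $\Phi_{\gamma,\f{D}^{\Hall,\prin}_{\scat}}(\vartheta_{\lambda,\enpt_1})$ using the wall-crossing automorphisms \eqref{WallCross} and compare the result, in a single explicit graded piece, with the direct enumeration giving $\vartheta_{\lambda,\enpt_2}$. The difference will be expressible as a sum of commutators of the form $[\kappa_E,\kappa_F]$ with $\dim E\parallel \dim F$, and a consistency check is that applying $\s{I}_t$ (or $\s{I}$) to both sides yields equal expressions, in agreement with Lemma \ref{CPS} applied over $\f{g}^q$ (or $\f{g}^{\scl}$). Thus the discrepancy lives precisely in $\f{i}^{\Skew}$.

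The main obstacle will be organizing the bookkeeping so that the non-commuting contributions survive after summing over broken lines: the two ``broken-line orderings'' of Hall-algebra products (one coming from the kink factors in \eqref{kink} at $\enpt_2$, the other from applying $\Phi_{\gamma,\f{D}}$ to already-kinked broken lines at $\enpt_1$) differ only in their treatment of parallel-dimension-vector factors, so one must verify that not all such discrepancies conspire to cancel. I expect this is best done by fixing the total outgoing class $\lambda+\sum\lambda_i$ to be a very specific small vector (so that only two or three broken lines contribute in each theta function) and computing the two orderings of the relevant Hall-algebra monomials directly, exhibiting a single indecomposable representation whose class has unequal coefficients on the two sides.
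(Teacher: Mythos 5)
Your high-level strategy matches the paper: exhibit an explicit $A_3$ counterexample by comparing theta functions computed at two nearby basepoints in a slice parallel to $\pi^*(N_{\bb{R}})$, and show they differ by something in $\ker(\s{I}_t)$.  However, you have misidentified the mechanism that makes the counterexample work, and I think the mechanism you describe would not lead you to the paper's actual discrepancy.

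You attribute the failure to rays ``where pairs of indecomposables with parallel dimension vectors become simultaneously semistable, [so that] the Hall algebra scattering functions are non-commutative'' --- i.e.\ to non-Abelian walls and to the non-skew-symmetry of the Lie algebra $\f{g}^{\Hall}$ itself.  The paper's counterexample does not use any non-Abelian walls at all: it uses the quiver $1 \to 2 \leftarrow 3$ (not $1 \to 2 \to 3$), and both broken lines under consideration only ever cross walls whose scattering functions enter through their \emph{first-order} terms, each of the form $(q-1)^{-1}\kappa_i$ for a single simple $S_i$, or $(q-1)^{-2}[\kappa_2,\kappa_1]$ on the unique relevant outgoing wall $\f{d}_{12}$.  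No two walls crossed have parallel normals, and no semistability of parallel-dimension-vector indecomposables plays a role.  What fails is the skew-symmetry of the \emph{action} of $\f{g}^{\Hall}$ on the module $A^{\Hall,\prin}$ (the second hypothesis of Lemma \ref{CPS}, not the first).  Concretely, the two broken-line readings produce $[\kappa_2,[\kappa_1,[\kappa_3,z^m]]]$ and $[[\kappa_2,\kappa_1],[\kappa_3,z^m]]$, whose difference is, by the Jacobi identity, $[\kappa_1,[\kappa_2,[\kappa_3,z^m]]]$.  The inner bracket $[\kappa_2,[\kappa_3,z^m]]$ has degree $(e_2+e_3,m)$ and, because $B^{\prin}((e_2,0),(e_3,m))=0$, it \emph{must} vanish in any skew-symmetric action (hence after applying $\s{I}^{\f{i}}$).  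In the Hall algebra it instead computes to a nonzero multiple of $z^m(\kappa_{23,0}-\kappa_{23,\id})$, where $\kappa_{23,0}$ and $\kappa_{23,\id}$ are the two non-isomorphic representations of dimension vector $(0,1,1)$.  Note these are \emph{not} a commutator of parallel-dimension-vector classes: they arise as the failure of $q$-commutativity $\kappa_2\kappa_3\neq q^{B(e_2,e_3)}\kappa_3\kappa_2$, and they integrate to equal elements of the quantum torus.  One then checks that $[\kappa_1,z^m(\kappa_{23,0}-\kappa_{23,\id})]$ is nonzero by picking out the coefficient of $\kappa_{123,0,0}$.

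So the concrete gap is: looking for non-Abelian walls in the $A_3$ scattering diagram (a Lie-algebra-level failure) would not lead you to this counterexample, because the discrepancy lives in the module action.  If you want to salvage your framing, the right statement is that the Hall product $a*b$ of classes $a\in H_{\reg}(Q,W)_{d_1}$, $b\in H_{\reg}(Q,W)_{d_2}$ is not simply $q^{B(d_1,d_2)}\,b*a$ even when both sides have the same $\s{I}_t$-image, so crossing two initial walls in the two possible orders produces Hall elements whose difference lies in $\ker(\s{I}_t)$ without being zero.  You also need to pin down the comparison: in the paper, the path $\enpt_1\to\enpt_2$ only crosses $\f{d}_1$, and the argument that this suffices hinges on the observation that $\Phi_{\gamma,\f{D}}$ can only shift the $e_1$-degree, while there are no other broken lines ending in the relevant degrees $m+ke_1+e_2+e_3$.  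Your proposal does not anticipate this, and without it the bookkeeping of ``which broken lines at $\enpt_1$ correspond to which at $\enpt_2$'' would be much harder to close off.
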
 
We note though that Hall algebra broken lines are still useful for understanding theta functions. For example, by studying Hall algebra broken lines and then integrating, we can understand the quantum or classical broken lines in terms of quiver Grassmannians, cf. \cite{mthesis}.

Recall from Remark \ref{ProjectionRemark} that we may compute theta functions using the images of broken lines under the projection $M_{\bb{R}}^{\prin}=M_{\bb{R}}\oplus N_{\bb{R}}\rar M_{\bb{R}}$.  \textbf{We will work in this projection throughout this subsection.}  Furthermore, the theta function we will consider will be of the form $\vartheta_{(0,m),\enpt}$ with $m\in \pi^*(N)$.  Suppose $a_i\in A_{\lambda_i}$ is the homogeneous element element attached to some straight segment of a broken line contributing to $\vartheta_{(0,m),\enpt}$.  Then $\lambda_i$ has the form $(n_i,m)$ for $n_i\in N$.  Using \eqref{pprin}, we see that the projection of $p^{*,\prin}(\lambda_i)$ modulo $(0,N)$ is $\pi^*(n_i)-m\in \pi^*(N)$.  Hence, by Definition \ref{broken line}(ii) (modulo $(0,N)$), we have
\begin{align}\label{gammaprime}
    \gamma'(t)=m-\pi^*(n_i)\in \pi^*(N)
\end{align}
for all $t$ in the corresponding straight segment of $\gamma$.  We thus obtain the following:
\begin{lem}\label{slice}
For $m\in \pi^*(N)$, all broken lines contributing to $\vartheta_{(0,m),\enpt}$ are contained in $\enpt+\pi^*(N_{\bb{R}})$.
\end{lem}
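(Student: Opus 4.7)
The plan is to leverage \eqref{gammaprime} together with the observation that wall-crossings preserve the $M$-component of the grading. Since the broken line starts with $\lambda_0 = (0,m)$, I want to argue that every $\lambda_i$ attached to $\gamma$ is of the form $(n_i, m)$ for some $n_i \in N$, so that \eqref{gammaprime} guarantees $\gamma'(t) = m - \pi^*(n_i) \in \pi^*(N_{\bb{R}})$ along each straight segment. Integrating this from $\gamma(0) = \enpt$ yields $\gamma(t) \in \enpt + \pi^*(N_{\bb{R}})$ for all $t \leq 0$, as desired.

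The key step is thus the preservation-of-$M$-grading claim. Every wall of $\f{D}^{\Box, \prin}_{\In}$ is supported on a hyperplane of the form $(e_i, 0)^{\perp}$ with scattering function in $\f{g}^{\Box}_{(e_i, 0)}^{\parallel}$. In the inductive construction of $\scat(\f{D}^{\Box, \prin}_{\In})$ (either via Theorem \ref{KSGS} for the skew-symmetric cases or via the perturbation argument of \S \ref{TropScatProof}), every newly produced wall has direction $n_{\f{d}}$ given by a sum of the $n$-components of the parent walls, which remains of the form $(n, 0)$ with $n \in N^+$. Consequently, every scattering function $g_{\f{d}}$ lives in $\f{g}^{\Box}_{(n,0)}^{\parallel}$ for some $n \in N^+$.

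Now the action of $\f{g}^{\Box}$ on $A^{\Box, \prin}$ is by $\Lambda$-graded derivations, so if $g \in \f{g}^{\Box}_{(n, 0)}$ and $a \in A^{\Box, \prin}_{(n', m')}$, then $g \cdot a \in A^{\Box, \prin}_{(n + n', m')}$. Applied iteratively to the wall-crossing operators $g_i$ of \eqref{kink}, this shows that homogeneous terms of $g_i \cdot a_i$ lie in $A^{\Box, \prin}_{(n_i + n_{\f{d}_i}, m_i)}$, so that Definition \ref{broken line}(iv) forces the $M$-component $m_i$ to be constant along the broken line. Since $m_0 = m$, we get $\lambda_i = (n_i, m)$ as claimed, and the conclusion follows from \eqref{gammaprime} together with the hypothesis $m \in \pi^*(N)$. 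There is no real obstacle here: once the preservation-of-$M$-grading observation is recorded, the result is immediate from \eqref{gammaprime}.
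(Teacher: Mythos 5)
Your proof is correct and follows essentially the same route as the paper, which derives the lemma directly from \eqref{gammaprime} after observing that the attached exponents $\lambda_i$ have the form $(n_i,m)$. The only difference is that you spell out the preservation-of-$M$-grading step (scattering functions lie in $\f{g}^{\Box}_{(n,0)}$ because $\Lambda^{\oplus}=(N^{\oplus},0)$, and $\Lambda$-graded derivations then preserve the $M$-component), which the paper treats as immediate from the setup in \S\ref{ClusterFlavors}.
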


For our counterexample, we use the $A_3$-quiver $1 \rightarrow 2 \leftarrow 3$.  In the corresponding (standard) basis $e_1,e_2,e_3$ for $N$, the matrix for $B$ is \begin{align*}
    B=\left(\begin{matrix}
    0 & -1 & 0\\
    1 & 0 & 1\\
    0 & -1 & 0
    \end{matrix}\right).
\end{align*}
In general, the map $\pi^*:N\rar M$ takes $e_i$ to the $i$-th row of $B$.  In particular, we see that $\ker(\pi^*)$ is in this case generated by $e_1-e_3$, and $\Image(\pi^*)=(e_1-e_3)^{\perp}$, or the span of the first two rows of $B$ (viewed as vectors in the dual basis).

The walls of the initial scattering diagram $\f{D}^{\Hall}_{\In}$ (i.e., $\f{D}^{\Hall,\prin}_{\In}$ projected to $M_{\bb{R}}$) are $\f{d}_i\coloneqq (e_i^{\perp},1_{\sst}(\pi^*(e_i)))$.  Figure \ref{CPSfig} depicts a slice of the resulting consistent scattering diagram $\f{D}_{\scat}^{\Hall}$ (which exists and agrees with $\f{D}^{\Hall}$ by Lemmas \ref{GenteelProp} and \ref{AcyclicGenteel}), parallel to $\pi^*(N_{\bb{R}})$ (taking advantage of Lemma \ref{slice}), with the upward-pointing direction in the figure being parallel to $\pi^*(e_1)=\pi^*(e_3)=(0,1,0)$, and the leftward-pointing direction being parallel to $\pi^*(e_2)=(-1,0,-1)$.  In the figure, whenever two walls with attached scattering functions $g_{i_1},g_{i_2}$ collide with the corresponding $n_{i_1},n_{i_2}$ satisfying $B(n_{i_1},n_{i_2})>0$ (otherwise reorder), the picture is locally the same, up to a change of variables, as the picture in Example \ref{a2}.  Hence, consistency results in one new wall with attached scattering function given up to first order by $[g_{i_1},g_{i_2}]$.  Recall from \eqref{log-fi} that $\log(1_{\sst}(\pi^*(e_i)))=(q-1)^{-1}\kappa_i+(\mbox{higher order terms})$.  So in particular, the element of $\wh{\f{g}}^{\Hall}$ attached to $\f{d}_{12}$  is $(q-1)^{-2}[\kappa_2,\kappa_1]+(\mbox{higher order terms})$.

\begin{figure}[htb]
\def\svgwidth{200pt}
    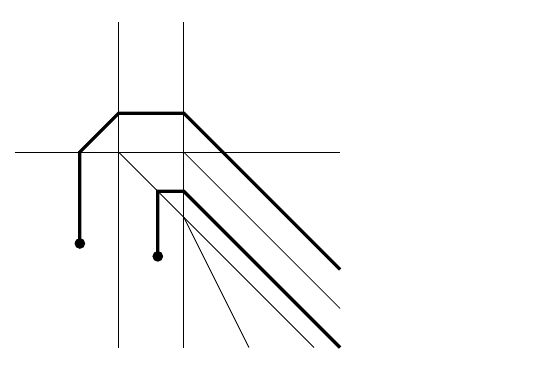
    \caption{Counterexample to the Carl-Pumperla-Siebert Lemma for Hall algebra broken lines. \label{CPSfig}}
\end{figure}

The bold lines in Figure \ref{CPSfig} represent broken lines with ends $(m,\enpt_1)$ and $(m,\enpt_2)$, where $m=\pi^*(e_1)+\pi^*(e_2)$, i.e., $m=(-1,1,-1)$ in the basis $e_1^*,e_2^*,e_3^*$ for $M$.  Here we keep in mind that by \eqref{gammaprime}, $\gamma'(t)=m$ on the first straight segment of $\gamma$.

By inspection, these are the only broken lines contributing to $\vartheta_{m,\enpt_1}$ or $\vartheta_{m,\enpt_2}$ whose attached element of $\wh{A}$ is in $A_{m+e_1+e_2+e_3}$ (the subscript denoting the degree in the $N^{\prin}$-grading).  We claim that for our counterexample, it suffices to check that these two attached elements are different from one another.  Indeed, while $\enpt_1$ and $\enpt_2$ do lie on opposite sides of $\f{d}_1$, the wall-crossing from $\enpt_2$ to $\enpt_1$ can only affect the grading in the $e_1$-direction.  So since there are no other broken lines with ends $(m,\enpt_2)$ whose final monomial has degree $(m+ke_1+e_2+e_3)$ for $k\in \bb{Z}$, the degree $(m+e_1+e_2+e_3)$ part of $\vartheta_{m,\enpt_2}$ after crossing $\f{d}_1$ must still be the final attached monomial of the bottom broken line from the figure.

Recall from \S \ref{ClusterFlavors} that when a broken line with attached element $a$ crosses a wall with attached element $g\in \wh{\f{g}}^{\Hall}$, the result of the action of $g$ on $a$ is $\exp [g,a]$.  In particular, if $g=\sum_{k\geq 1} g_k$ with $g_k\in \f{g}^{\Hall}_{kn}$ for some $n\in N^+$, then the action yields $[g,a]+\mbox{(higher order terms)}$.  Note that for each of the two broken lines in Figure \ref{CPSfig}, it is only the first-order terms of the scattering functions that contribute.  Also note that all the signs in the exponents as in \eqref{kink} are positive for the two broken lines under consideration.  We thus compute that the final attached element for the broken line $\gamma_1$ ending at $\enpt_1$ is 
\begin{align*}
    a_{\gamma_1}=(q-1)^{-3}[\kappa_2,[\kappa_1,[\kappa_3,z^m]]], 
\end{align*}
Similarly, the final attached element for the broken line $\gamma_2$ ending at $\enpt_2$ is
\begin{align*}
    a_{\gamma_2}&=(q-1)^{-3}[[\kappa_2,\kappa_1],[\kappa_3,z^m]] \\
    &=(q-1)^{-3}[\kappa_2,[\kappa_1,[\kappa_3,z^m]]]-(q-1)^{-3}[\kappa_1,[\kappa_2,[\kappa_3,z^m]]],
\end{align*}
where in the last step we applied the Jacobi identity.  So the difference between $a_{\gamma_2}$ and $a_{\gamma_1}$ is
\begin{align*}
    a_{\gamma_1}-a_{\gamma_2} = (q-1)^{-3}[\kappa_1,[\kappa_2,[\kappa_3,z^m]]]
\end{align*}

After applying $\s{I}^{\f{i}}$ for $\f{i}\supset \f{i}^{\Skew}$, the skew-symmetry of the brackets implies that {$[\kappa_2, [\kappa_3, z^m]]$} will vanish, and so this difference is indeed $0$ as implied by Lemma \ref{CPS}.  But in $A^{\Hall,\prin}$ this is not the case, as we will now check.  Using \eqref{adzm}, we compute that
\begin{align*}
    [\kappa_3,z^m]=(q^{-1}-1)z^m\kappa_3,
\end{align*}
and then
\begin{align}\label{z23}
    [\kappa_2,(q^{-1}-1)z^m\kappa_3]=(q^{-1}-1)z^m(q^{-1}\kappa_2\kappa_3-\kappa_3\kappa_2).
\end{align}

Let $\kappa_{23,f}$ denote the $\kappa$-element of the Hall algebra corresponding to the representation $(0\rightarrow \bb{C}\stackrel{f}{\leftarrow} \bb{C})$.  Up to isomorphism, $f$ here is either $0$ or $\id$.  Then using Lemma \ref{CompSeriesLem}, we compute
\begin{align*}
    \kappa_2\kappa_3=\kappa_{23,0}+(q-1)\kappa_{23,\id}
\end{align*}
and
\begin{align*}
    \kappa_3\kappa_2=\kappa_{23,0}.
\end{align*}
Now the right-hand side of \eqref{z23} becomes:
\begin{align*}
    (q^{-1}-1)^2z^m(\kappa_{23,0}-\kappa_{23,\id})
\end{align*}
As a check, note that 
$\s{I}_t(\kappa_{23,0}-\kappa_{23,\id})=0$, so we do not violate Lemma \ref{CPS} after integrating.

Finally, we must check that that 
\begin{align*}
    [\kappa_1,z^m(\kappa_{23,0}-\kappa_{23,\id})]
\end{align*}
is nonzero.  Moving $\kappa_1$ past $z^m$ yields
\begin{align}\label{goalnonzero}
    [\kappa_1,z^m(\kappa_{23,0}-\kappa_{23,\id})] = z^m\left( q\kappa_1(\kappa_{23,0}-\kappa_{23,\id})  - (\kappa_{23,0}-\kappa_{23,\id})\kappa_1  \right).
\end{align}

Let $\kappa_{123,a,b}$ denote the $\kappa$-element corresponding to the representation $\bb{C} \stackrel{a}{\rightarrow} \bb{C} \stackrel{b}{\leftarrow} \bb{C}$.  Rather than completely computing $q\kappa_1(\kappa_{23,0}-\kappa_{23,\id})  - (\kappa_{23,0}-\kappa_{23,\id})\kappa_1$, 
let us just look at the coefficient of $\kappa_{123,0,0}$ in the product.  
The product $q\kappa_1\kappa_{23,0}$ yields a contribution of $q\kappa_{123,0,0}$, 
while the product  $-\kappa_{23,0}\kappa_1$ yields a contribution of $-\kappa_{123,0,0}$.  Thus, \eqref{goalnonzero} includes a term of the form
\begin{align*}
z^m(q-1)\kappa_{123,0,0}
\end{align*}
which will not cancel with any other terms.  In particular, \eqref{goalnonzero} is nonzero, as desired.  This proves Proposition \ref{CPSfail}. \qed

\bibliographystyle{alpha}  
\bibliography{mandel}        

\begin{thebibliography}{GHKK18}

\bibitem[AP19]{AP}
S.~{Alexandrov} and B.~{Pioline}.
\newblock Attractor flow trees, {BPS} indices and quivers.
\newblock {\em Adv. Theor. Math. Phys.}, 23(3):627--699, 2019.

\bibitem[BG16]{BG}
F.~{Block} and L.~{G\"{o}ttsche}.
\newblock Refined curve counting with tropical geometry.
\newblock {\em Compos. Math.}, 152(1):115--151, 2016.

\bibitem[{Bou}]{Bou2}
P.~{Bousseau}.
\newblock {The quantum tropical vertex}.
\newblock arXiv:1806.11495.

\bibitem[{Bou}19]{Bou}
P.~{Bousseau}.
\newblock Tropical refined curve counting from higher genera and lambda
  classes.
\newblock {\em Invent. Math.}, 215(1):1--79, 2019.

\bibitem[{Bri}12]{BrIntro}
T.~{Bridgeland}.
\newblock An introduction to motivic {H}all algebras.
\newblock {\em Adv. Math.}, 229(1):102--138, 2012.

\bibitem[{Bri}17]{Bridge}
T.~{Bridgeland}.
\newblock Scattering diagrams, {H}all algebras and stability conditions.
\newblock {\em Algebr. Geom.}, 4(5):523--561, 2017.

\bibitem[Che]{cheung2019theta}
M.~Cheung.
\newblock Theta functions and quiver grassmannians.
\newblock {\em arXiv:1906.12299}.

\bibitem[Che16]{mthesis}
M.~Cheung.
\newblock Tropical techniques in cluster theory and enumerative geometry.
\newblock {\em PhD thesis, University of California, San Diego}, 2016.

\bibitem[CPS]{CPS}
M.~{Carl}, M.~{Pumperla}, and B.~{Siebert}.
\newblock {A tropical view on Landau-Ginzburg models}.
\newblock Preprint, 2011.

\bibitem[{Dav}]{DavJacobi}
B.~{Davison}.
\newblock {Refined invariants of finite-dimensional Jacobi algebras}.
\newblock arXiv:1903.00659.

\bibitem[{Dav}18]{BenPos}
B.~{Davison}.
\newblock Positivity for quantum cluster algebras.
\newblock {\em Ann. of Math. (2)}, 187(1):157--219, 2018.

\bibitem[DL01]{DL}
J.~{Denef} and F.~{Loeser}.
\newblock Geometry on arc spaces of algebraic varieties.
\newblock In {\em European {C}ongress of {M}athematics, {V}ol. {I}
  ({B}arcelona, 2000)}, volume 201 of {\em Progr. Math.}, pages 327--348.
  Birkh\"{a}user, Basel, 2001.

\bibitem[DMa]{DMan}
B.~{Davison} and T.~{Mandel}.
\newblock {Strong positivity for quantum theta bases of quantum cluster
  algebras}.
\newblock arXiv:1910.12915.

\bibitem[DMb]{DMdim1}
B.~{Davison} and S.~{Meinhardt}.
\newblock {Donaldson-Thomas theory for categories of homological dimension one
  with potential}.
\newblock arXiv:1512.08898.

\bibitem[DM20]{DM}
B.~{Davison} and S.~{Meinhardt}.
\newblock {Cohomological Donaldson--Thomas theory of a quiver with potential
  and quantum enveloping algebras}.
\newblock {\em Invent. Math.}, 2020.

\bibitem[DWZ08]{DWZ}
H.~{Derksen}, J.~{Weyman}, and A.~{Zelevinsky}.
\newblock Quivers with potentials and their representations. {I}. {M}utations.
\newblock {\em Selecta Math. (N.S.)}, 14(1):59--119, 2008.

\bibitem[FG09]{FG1}
V.~{Fock} and A.~{Goncharov}.
\newblock {Cluster ensembles, quantization and the dilogarithm}.
\newblock {\em Ann. Sci.\'Ec. Norm. Sup. (4)}, 42(6):865--930, 2009.

\bibitem[FK94]{FK}
L.D. {Faddeev} and R.M. {Kashaev}.
\newblock Quantum dilogarithm.
\newblock {\em Modern Phys. Lett. A}, 9(5):427--434, 1994.

\bibitem[FS15]{FS}
S.~A. Filippini and J.~Stoppa.
\newblock Block-{G}\"ottsche invariants from wall-crossing.
\newblock {\em Compos. Math.}, 151(8):1543--1567, 2015.

\bibitem[FZ07]{FZ4}
S.~{Fomin} and A.~{Zelevinsky}.
\newblock Cluster algebras. {IV}. {C}oefficients.
\newblock {\em Compos. Math.}, 143(1):112--164, 2007.

\bibitem[GHK15]{GHK1}
M.~{Gross}, P.~{Hacking}, and S.~{Keel}.
\newblock Mirror symmetry for log {C}alabi-{Y}au surfaces {I}.
\newblock {\em Publ. Math. Inst. Hautes \'Etudes Sci.}, 122:65--168, 2015.

\bibitem[GHKK18]{GHKK}
M.~{Gross}, P.~{Hacking}, S.~{Keel}, and M.~{Kontsevich}.
\newblock Canonical bases for cluster algebras.
\newblock {\em J. Amer. Math. Soc.}, 31(2):497--608, 2018.

\bibitem[GHS]{GHS}
M.~{Gross}, P.~{Hacking}, and B.~{Siebert}.
\newblock {Theta functions on varieties with effective anti-canonical class}.
\newblock {\em Mem. Amer. Math. Soc.}
\newblock (to appear).

\bibitem[GP10]{GP}
M.~{Gross} and R.~{Pandharipande}.
\newblock Quivers, curves, and the tropical vertex.
\newblock {\em Port. Math.}, 67(2):211--259, 2010.

\bibitem[GPS10]{GPS}
M.~Gross, R.~Pandharipande, and B.~Siebert.
\newblock The tropical vertex.
\newblock {\em Duke Math. J.}, 153(2):297--362, 2010.

\bibitem[GS11]{GS11}
M.~{Gross} and B.~{Siebert}.
\newblock From real affine geometry to complex geometry.
\newblock {\em Ann. of Math. (2)}, 174(3):1301--1428, 2011.

\bibitem[GS18]{GSInt}
M.~{Gross} and B.~{Siebert}.
\newblock Intrinsic mirror symmetry and punctured {G}romov-{W}itten invariants.
\newblock In {\em Algebraic geometry: {S}alt {L}ake {C}ity 2015}, volume~97 of
  {\em Proc. Sympos. Pure Math.}, pages 199--230. Amer. Math. Soc., Providence,
  RI, 2018.

\bibitem[{Joy}07]{joyce2007configurations}
D.~{Joyce}.
\newblock Configurations in abelian categories. {II}. ringel--hall algebras.
\newblock {\em Adv. Math.}, 210(2):635--706, 2007.

\bibitem[JS12]{JS}
D.~{Joyce} and Y.~{Song}.
\newblock A theory of generalized {D}onaldson-{T}homas invariants.
\newblock {\em Mem. Amer. Math. Soc.}, 217(1020):iv+199, 2012.

\bibitem[{Kel}11]{Kel}
B.~{Keller}.
\newblock On cluster theory and quantum dilogarithm identities.
\newblock In {\em Representations of algebras and related topics}, EMS Ser.
  Congr. Rep., pages 85--116. Eur. Math. Soc., Z\"{u}rich, 2011.

\bibitem[{Kin}94]{King}
A.~D. {King}.
\newblock Moduli of representations of finite-dimensional algebras.
\newblock {\em Quart. J. Math. Oxford Ser. (2)}, 45(180):515--530, 1994.

\bibitem[KS]{KSmotivic}
M.~{Kontsevich} and Y.~{Soibelman}.
\newblock {Stability structures, motivic Donaldson-Thomas invariants and
  cluster transformations}.
\newblock arXiv:0811.2435.

\bibitem[KS06]{KS}
M.~{Kontsevich} and Y.~{Soibelman}.
\newblock Affine structures and non-{A}rchimedean analytic spaces.
\newblock In {\em The unity of mathematics}, volume 244 of {\em Progr. Math.},
  pages 321--385. Birkh\"auser Boston, Boston, MA, 2006.

\bibitem[KS11]{KScoha}
M.~{Kontsevich} and Y.~{Soibelman}.
\newblock Cohomological {H}all algebra, exponential {H}odge structures and
  motivic {D}onaldson-{T}homas invariants.
\newblock {\em Commun. Number Theory Phys.}, 5(2):231--352, 2011.

\bibitem[KS14]{WCS}
M.~{Kontsevich} and Y.~{Soibelman}.
\newblock Wall-crossing structures in {D}onaldson-{T}homas invariants,
  integrable systems and mirror symmetry.
\newblock In {\em Homological mirror symmetry and tropical geometry}, volume~15
  of {\em Lect. Notes Unione Mat. Ital.}, pages 197--308. Springer, Cham, 2014.

\bibitem[Lin]{lin2017correspondence}
Y.-S. Lin.
\newblock {Correspondence Theorem between Holomorphic Discs and Tropical Discs
  on K3 Surfaces}.
\newblock arXiv:1703.00411.

\bibitem[LMY19]{LMY}
N.C. Leung, Z.~Ma, and M.B. Young.
\newblock {Refined Scattering Diagrams and Theta Functions From Asymptotic
  Analysis of Maurer--Cartan Equations}.
\newblock {\em Int. Math. Res. Not. IMRN}, 2019.

\bibitem[{Man}]{Man3}
T.~{Mandel}.
\newblock {Scattering diagrams, theta functions, and refined tropical curve
  counts}.
\newblock arXiv:1503.06183.

\bibitem[{Mik}17]{Mikq}
G.~{Mikhalkin}.
\newblock Quantum indices and refined enumeration of real plane curves.
\newblock {\em Acta Math.}, 219(1):135--180, 2017.

\bibitem[{Mou}]{Mou}
L.~{Mou}.
\newblock {Scattering diagrams of quivers with potentials and mutations}.
\newblock arXiv:1910.13714.

\bibitem[MPS11]{MPS}
J.~{Manschot}, B.~{Pioline}, and A.~{Sen}.
\newblock Wall crossing from {B}oltzmann black hole halos.
\newblock {\em J. High Energy Phys.}, (7):059, 73, 2011.

\bibitem[MR]{MRudMult}
T.~{Mandel} and H.~{Ruddat}.
\newblock {Tropical quantum field theory, mirror polyvector fields, and
  multiplicities of tropical curves}.
\newblock arXiv:1902.07183.

\bibitem[{Mul}16]{MulGreen}
G.~{Muller}.
\newblock The existence of a maximal green sequence is not invariant under
  quiver mutation.
\newblock {\em Electron. J. Combin.}, 23(2):Paper 2.47, 23, 2016.

\bibitem[{Nag}13]{Nagao}
K.~{Nagao}.
\newblock Donaldson-{T}homas theory and cluster algebras.
\newblock {\em Duke Math. J.}, 162(7):1313--1367, 2013.

\bibitem[NS06]{NS}
T.~{Nishinou} and B.~{Siebert}.
\newblock {Toric degenerations of toric varieties and tropical curves}.
\newblock {\em Duke Math. J.}, 135:1--51, 2006.

\bibitem[{Qin}]{Qin}
F.~{Qin}.
\newblock {Bases for upper cluster algebras and tropical points}.
\newblock arXiv:1902.09507.

\bibitem[Rei03]{ReinHN}
Markus Reineke.
\newblock The {H}arder-{N}arasimhan system in quantum groups and cohomology of
  quiver moduli.
\newblock {\em Invent. Math.}, 152(2):349--368, 2003.

\bibitem[{Rei}10]{Rein}
M.~{Reineke}.
\newblock Poisson automorphisms and quiver moduli.
\newblock {\em J. Inst. Math. Jussieu}, 9(3):653--667, 2010.

\bibitem[{Sai}90]{Saito}
M.~{Saito}.
\newblock Mixed {H}odge modules.
\newblock {\em Publ. Res. Inst. Math. Sci.}, 26(2):221--333, 1990.

\end{thebibliography}
\index{Bibliography@\emph{Bibliography}}%

\end{document}